\newtheorem{dummy}{}[section]
\newtheorem{theorem}[dummy]{Theorem}
\newtheorem{lemma}[dummy]{Lemma}
\newtheorem{corollary}[dummy]{Corollary}
\theoremstyle{definition}
\newtheorem{definition}[dummy]{Definition}
\newtheorem{remark}[dummy]{Remark}
\newtheorem{result}{Theorem}
\newcommand{\nov}{\ensuremath{\mathrm{nov}}}
\newcommand{\FM}{\ensuremath{\mathbb{FM}}}
\newcommand{\N}{\ensuremath{\mathbb{N}}}
\newcommand{\Z}{\ensuremath{\mathbb{Z}}}
\newcommand{\Q}{\ensuremath{\mathbb{Q}}}
\newcommand{\C}{\ensuremath{\mathbb{C}}}
\renewcommand{\P}{\ensuremath{\mathbb{P}}}
\newcommand{\M}{\ensuremath{\overline{\mathcal{M}}}}
\renewcommand{\O}{\ensuremath{\mathcal{O}}}
\newcommand{\ev}{\ensuremath{\textrm{ev}}}
\newcommand{\vir}{\ensuremath{\textrm{vir}}}
\renewcommand{\t}{\ensuremath{\mathbf{t}}}
\newcommand{\ch}{\ensuremath{\textrm{ch}}}
\renewcommand{\d}{\ensuremath{\partial}}
\newcommand{\GIT}{\mkern-5mu\mathbin{/\mkern-6mu/\mkern-2mu}}
\newcommand{\vw}{\ensuremath{\vec{w}}}
\newcommand{\vd}{\ensuremath{\vec{d}}}
\newcommand{\Hom}{\ensuremath{\textrm{Hom}}}
\newcommand{\I}{\ensuremath{\mathcal{I}}}
\renewcommand{\H}{\ensuremath{\mathcal{H}}}
\newcommand{\m}{\ensuremath{\vec{m}}}
\newcommand{\V}{\ensuremath{\mathcal{V}}}
\newcommand{\cL}{\ensuremath{\mathcal{L}}}
\newcommand{\Contr}{\ensuremath{\textrm{Contr}}}
\newcommand{\Aut}{\ensuremath{\textrm{Aut}}}
\newcommand{\val}{\ensuremath{\textrm{val}}}
\newcommand{\bff}{\ensuremath{\mathbf{f}}}
\newcommand{\bfg}{\ensuremath{\mathbf{g}}}
\newcommand{\one}{\ensuremath{\mathbf{1}}}
\newcommand{\cT}{\ensuremath{\mathcal{T}}}
\newcommand{\bftau}{\ensuremath{\pmb{\tau}}}
\newcommand{\RC}{\ensuremath{\textrm{RC}}}
\newcommand{\T}{\ensuremath{\mathrm{T}}}
\newcommand{\ri}{\ensuremath{\textrm{i}}}
\newcommand{\re}{\ensuremath{\textrm{e}}}
\newcommand{\F}{\ensuremath{\mathcal{F}}}
\newcommand{\QM}{\ensuremath{\overline{\mathcal{QM}}}}
\newcommand{\ts}{\ensuremath{\tilde{s}}}
\newcommand{\bV}{\ensuremath{\mathbb{V}}}
\newcommand{\bU}{\ensuremath{\mathbb{U}}}
\newcommand{\un}{\ensuremath{\text{un}}}
\newcommand{\bt}{\ensuremath{\mathbf{t}}}
\begin{document}

\title[Sigma Models and Phase Transitions]{Sigma Models and Phase Transitions for Complete Intersections}
\author{Emily Clader and Dustin Ross}

\begin{abstract}
We study a one-parameter family of gauged linear sigma models (GLSMs) naturally associated to a complete intersection in weighted projective space. In the positive phase of the family we recover Gromov-Witten theory of the complete intersection, while in the negative phase we obtain a Landau--Ginzburg-type theory. Focusing on the negative phase, we develop foundational properties which allow us to state and prove a genus-zero comparison theorem that generalizes the multiple log-canonical correspondence and should be viewed as analogous to quantum Serre duality in the positive phase. Using this comparison result, along with the crepant transformation conjecture and quantum Serre duality, we prove a genus-zero correspondence between the GLSMs which arise at the two phases, thereby generalizing the Landau-Ginzburg/Calabi-Yau correspondence to complete intersections.
\end{abstract}

\maketitle

\section{Introduction}

In his seminal paper ``Phases of $N=2$ Theories in Two Dimensions'' \cite{Witten}, Witten introduced and studied a new type of supersymmetric quantum field theory known as the \emph{gauged linear sigma model} (GLSM).  Developed mathematically in recent work of Fan--Jarvis--Ruan \cite{FJRnew}, the GLSM depends on the choice of (1) a GIT quotient
\[X_{\theta} = [V \GIT_{\theta} G],\]
in which $V$ is a complex vector space, $G \subset \text{GL}(V)$, and $\theta$ is a character of $G$; and (2) a polynomial function $W: X_{\theta} \rightarrow \C$.  Witten conjectured that the GLSMs arising from different choices of $\theta$ should be related by analytic continuation, a relationship that he referred to as {\it phase transition}.

In particular, if $Z$ is a Calabi--Yau complete intersection in weighted projective space $\P(w_1, \ldots, w_M)$ defined by the vanishing of polynomials $F_1, \ldots, F_N$ of degrees $d_1,\ldots, d_N$, then there is a natural way to realize the Gromov--Witten theory of $Z$ as a GLSM with $V = \C^{M+N}$, $G=\C^*$, and
\[W = W(x_1, \ldots, x_M, p_1, \ldots, p_N) = \sum_{j=1}^N p_j F_j(x_1, \ldots, x_M).\]
Moreover, this model has two distinct phases, corresponding to the two distinct GIT quotients $X_+$ and $X_-$ of the form $[V\GIT_{\theta} G]$.  The positive phase of the GLSM yields the Gromov--Witten theory of $Z$, whereas the negative phase, which we refer to simply as the GLSM of $(X_-, W)$, yields a Landau--Ginzburg-type theory.

The main result of this paper is a genus-zero verification of Witten's proposal in this setting, under two additional assumptions:
\begin{enumerate}[(\bf{{A}}1)]
	\item for all $i$ and $j$, $w_i|d_j$;
	\item for any $m\in\Q/\Z$ such that $mw_{i_0} \in\Z$ for some $i_0 \in \{1, \ldots, M\}$, one has
	\[\#\{i  \; | \; mw_i \in\Z\} \geq \#\{j  \; | \; md_j \in\Z\}.\]
\end{enumerate}
We prove the following:
\begin{result}[see Theorem \ref{maintheorem2} for precise statement]
	When assumptions \textbf{(A1)} and \textbf{(A2)} are satisfied, the genus-zero Gromov--Witten theory of the complete intersection $Z$ can be explicitly identified with the genus-zero GLSM of $(X_-,W)$, after analytic continuation.
\end{result}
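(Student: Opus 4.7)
The plan is to realize the desired identification as a composition of three symplectic isomorphisms between genus-zero Lagrangian cones (in the sense of Givental), connecting four theories: the GW theory of $Z$, the $(e,\oplus_{j=1}^{N}\O(d_j))$-twisted GW theory of $X_+=\P(w_1,\ldots,w_M)$, the analogously twisted GW theory of $X_-$, and the GLSM of $(X_-,W)$. The three edges of the chain are classical quantum Serre duality (QSD) on the positive side, the toric crepant transformation conjecture (CTC) across the GIT wall separating $X_+$ from $X_-$, and the negative-phase comparison theorem established earlier in the paper, which is the central technical input of this project and plays the role of QSD in the Landau--Ginzburg phase.

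First I would rewrite the GW theory of $Z$ as the twisted GW theory of $X_+$ via nonequivariant QSD; the Calabi--Yau balancing $\sum_j d_j=\sum_i w_i$ built into the setup guarantees that the nonequivariant limit exists. Next I would invoke the toric CTC to move the twisted cone from $X_+$ to $X_-$, producing a symplectic identification after analytic continuation in the K\"ahler/Novikov parameter. Assumption \textbf{(A1)} is essential here: the divisibility $w_i\mid d_j$ ensures that the twisting classes $\O(d_j)$ are pulled back from the common coarse moduli, so that they transport coherently across the wall; assumption \textbf{(A2)} controls the age-shifted contributions on the $X_-$-side inertia stack in precisely the way demanded by the comparison theorem. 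Finally, I would apply that comparison theorem to identify the twisted GW cone of $X_-$ with the untwisted GLSM cone of $(X_-,W)$. Composing the three symplectomorphisms delivers the statement, with the analytic continuation inherited from the middle step.

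The main obstacle I anticipate lies not in any single step but in the matching of conventions between them. Each of QSD, CTC, and the negative-phase comparison theorem carries its own polarization, its own Novikov/K\"ahler variables, its own twisted Euler class, and its own untwisting on the inertia stack; making the composition an \emph{honest} identification, rather than one corrupted by an auxiliary symplectic operator, requires careful bookkeeping to ensure that the output of one step is literally the input of the next. In particular, one must verify that the analytic continuation produced by the CTC intertwines correctly with the changes of variables built into the two Serre-duality-type statements that bracket it, and that assumptions \textbf{(A1)}--\textbf{(A2)} are exactly what force the three intermediate twisted theories to inhabit a common symplectic vector space so that the composition is well-defined.
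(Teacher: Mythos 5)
Your high-level strategy coincides with the paper's (it is the square~\eqref{diagram} traversed from top-right to bottom-left): compose quantum Serre duality, the toric crepant transformation conjecture, and the negative-phase comparison theorem (Theorem~\ref{maintheorem}). The three ingredients you name are exactly the ones the paper uses. However, there is a genuine gap between your proposal and a proof, and it is precisely the step you dismiss as ``bookkeeping.''

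The composition you describe only directly gives a \emph{$\T$-equivariant} statement: Theorem~\ref{maintheorem}, Theorem~\ref{thm:ctc}, and Theorem~\ref{thm:qsd} together produce a symplectomorphism $\phi_\T^+\circ\bU_\T$ matching the equivariant cone $\cL_\T^{X_-,W}=\cL_\T^{X_-}$ with the equivariant twisted cone $\cL_\T^{\P(\vec w),e}$ after analytic continuation. What Theorem~\ref{maintheorem2} actually asserts concerns the \emph{non-equivariant} narrow cone $\cL^{X_-,W}$ and the non-equivariant ambient cone $\cL^Z$, and passing from the equivariant identification to the non-equivariant one is the bulk of the argument, not a convention check. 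One must show (as in Lemma~\ref{lastlemma}) that the restriction $\phi_\T^+\circ\bU_\T\big|_{\V^{X_-,W}}$ admits a non-equivariant limit and that this limit is still a symplectic isomorphism after applying $i^*$; that the images $\phi_\T^+\circ\bU_\T(\bff)$ of the points $\bff$ one actually cares about also have non-equivariant limits; and that taking the non-equivariant limit commutes with the composed map. Each of these rests on a delicate analysis of the zeros of $\bU_\T$ at the loci $H=\alpha_j$ and the cancellation of the simple poles along $\alpha_k=\alpha_j$ introduced by the Fourier--Mukai structure (Lemma~\ref{lem:vanish}), paired with the division by $e_\T(\oplus_j\O(d_j))=\prod_jd_j(H-\alpha_j)$ coming from $\phi_\T^+$. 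Without this pole/zero bookkeeping, it is not even clear that a non-equivariant limit exists, let alone that it identifies the two narrow/ambient cones. One also needs the preliminary result (Corollary~\ref{cor:nelimit}) that every point of $\cL^{X_-,W}$ arises as a non-equivariant limit of a point in a designated subspace $\cL_{\T,\mathrm{pre}}^{X_-,W}\subset\cL_\T^{X_-,W}$, which your proposal does not set up.

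Two smaller points of misattribution. The Calabi--Yau condition $\sum w_i=\sum d_j$ is not what guarantees the non-equivariant limit in QSD; rather, it is what makes $X_-\dashrightarrow X_+$ crepant (so the CTC applies) and what gives the vector space isomorphism $H^*_{CR}(X_-)\cong H^*_{CR}(X_+)$. Your account of \textbf{(A1)} (that it makes $\oplus_j\O(d_j)$ pulled back from the coarse space, needed for quantum Lefschetz) is correct, but \textbf{(A2)} is not used in the comparison theorem; it enters in the non-equivariant limit: it gives $|I|\geq|J|$ at non-narrow sectors, which forces the broad parts of the $I$-function and of the dual basis elements $\Phi^\mu$ to vanish in the limit (Lemma~\ref{nelimit}), and forces $H_{(-m)}^{|I|}=0$ in Lemma~\ref{lastlemma}(ii) so that the otherwise-dangerous summands die. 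Your proposal therefore names the right pieces but places the real difficulty and the real use of the hypotheses in the wrong spots.
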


Witten's proposal has previously received a great deal of attention in the case where $Z$ is a hypersurface, in which case it is known as the Landau--Ginzburg/Calabi--Yau (LG/CY) correspondence. Mathematically, the LG/CY correspondence relates the Gromov--Witten theory of $Z$ to the quantum singularity theory of $W$, and it was proved in varying levels of generality by Chiodo--Ruan \cite{CR}, Chiodo--Iritani--Ruan \cite{CIR}, and Lee--Priddis--Shoemaker \cite{LPS}. It has been extended to the non-Calabi--Yau setting by Acosta \cite{Acosta} and Acosta--Shoemaker \cite{AcostaShoemaker}, and to very special complete intersections by the first author \cite{Clader}. 

Specializing the GLSM of $(X_-,W)$ to the hypersurface case does not immediately recover the quantum singularity theory as defined by Fan--Jarvis--Ruan \cite{FJR3, FJR, FJR2}. Rather, we recover a theory built from moduli spaces of weighted spin curves, which were introduced by the second author and Ruan in \cite{RR}. Nonetheless, the main result of \cite{RR} states that the theory built from weighted spin curves is equivalent, in genus zero, to the usual singularity theory, and it is through this equivalence that Theorem 1 generalizes the LG/CY correspondence.

Our proof of Theorem 1 is motivated by ideas introduced by Lee--Priddis--Shoemaker \cite{LPS}.  Specifically, for a particular action of $\T=(\C^*)^N$, we develop the following square of $\T$-equivariant equivalences:

\begin{equation}
	\label{diagram}
	\xymatrixcolsep{4pc}
	\xymatrix{
		\mathrm{GW}_{\T}(X_-) \ar@{<->}[r]^{\text{CTC}}\ar@{<->}[d]_{\text{Theorem 2}} &  \mathrm{GW}_{\T}(X_+)\ar@{<->}[d]^{\text{QSD}}\\
		\mathrm{GLSM}_{\T}(X_-,W) \ar@{<-->}[r] & \mathrm{GW}_\T(Z).
	}
\end{equation}

The right-hand vertical arrow is quantum Serre duality (QSD) and the quantum Lefschetz hyperplane principle, developed by Coates--Givental \cite{CoatesGivental} and Tseng \cite{Tseng}. It is an identification of a $\T$-equivariant extension of the Gromov--Witten theory of $Z$ to the $\T$-equivariant Gromov--Witten theory of $X_+$.  The top of the diagram is the crepant transformation conjecture (CTC), proved in this setting by Coates--Iritani--Jiang \cite{CIJ}.

The equivalence in the left-hand vertical arrow is new and is the technical heart of our paper.  It generalizes the multiple log-canonical (MLK) correspondence of \cite{LPS} and serves as the analogue in the negative phase of quantum Serre duality:

\begin{result}[see Theorem \ref{maintheorem} for precise statement]
	A $\T$-equivariant extension of the narrrow, genus-zero GLSM of $(X_-,W)$ can be explicitly identified with the $\T$-equivariant genus-zero Gromov--Witten theory of $X_-$.
\end{result}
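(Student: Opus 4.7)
The plan is to prove Theorem 2 by adapting the strategy of the MLK correspondence from \cite{LPS} to the complete intersection setting. The key players are the genus-zero $\T$-equivariant Givental Lagrangian cones $\cL_{\mathrm{GLSM}}^\T$ of the narrow GLSM of $(X_-,W)$ and $\cL_{\mathrm{GW}(X_-)}^\T$ of the Gromov--Witten theory of $X_-$. The goal is to construct an explicit symplectic identification between the ambient $\T$-equivariant state spaces that carries one cone to the other.

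First, I would apply $\T$-equivariant virtual localization on the GLSM side. The torus $\T=(\C^*)^N$ acts by scaling the $p$-fibers in a way that is compatible with the superpotential $W=\sum_j p_j F_j$. The $\T$-fixed locus of the GLSM moduli space is precisely the locus where all sections of the $p$-line bundles vanish identically, and on this locus the virtual structure reduces to the one obtained by forgetting $W$ altogether. The equivariant localization formula then expresses the GLSM virtual cycle in terms of the virtual cycle of the underlying moduli of weighted spin curves for $X_-$, capped with the inverse $\T$-equivariant Euler class of the normal bundle $\bigoplus_j R\pi_* \cL_{p_j}$, where $\cL_{p_j}$ is the universal line bundle attached to the $j$-th $p$-field.

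Next, I would reinterpret this localized theory as an Euler-twisted Gromov--Witten theory of $X_-$ in the sense of Coates--Givental \cite{CoatesGivental} and Tseng \cite{Tseng}. The orbifold quantum Riemann--Roch theorem then provides an explicit symplectic transformation between the twisted Lagrangian cone and the untwisted cone $\cL_{\mathrm{GW}(X_-)}^\T$, built from Chern characters of the bundles $\cL_{p_j}$ together with Bernoulli-type correction terms. Composing this transformation with the localization comparison identifies $\cL_{\mathrm{GLSM}}^\T$ with $\cL_{\mathrm{GW}(X_-)}^\T$. Assumption \textbf{(A1)} is needed so that the line bundles $\cL_{p_j}$ descend integrally to the moduli of weighted spin curves underlying $X_-$, while \textbf{(A2)} is used to ensure that the narrow-sector restriction on the GLSM side corresponds, under the transformation, to the full $\T$-equivariant state space on the Gromov--Witten side rather than a proper subspace.

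The main obstacle lies in executing the first two steps rigorously: one must identify the $\T$-localized perfect obstruction theory of the GLSM with the Euler-twisted perfect obstruction theory of $X_-$. The two moduli stacks are built from genuinely different universal data---sections of line bundles on weighted spin curves equipped with the cosection induced by $W$, versus orbifold stable maps to $X_-$---so the comparison requires a delicate analysis of the tangent-obstruction complexes, showing that after passing to the $\T$-fixed locus and extracting the equivariant normal bundle, what remains agrees with the Coates--Givental--Tseng twisting datum. Once this identification of obstruction theories is in place, the ``explicit identification'' claimed in the theorem is produced by the standard quantum Riemann--Roch machinery, and amounts to recording the resulting hypergeometric modification between the corresponding $I$-functions on the two sides.
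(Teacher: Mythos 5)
Your plan misdescribes the $\T$-fixed locus and skips the two mechanisms that make the argument actually go through. The $\T=(\C^*)^N$-fixed locus of $\QM^W_{0,n}(X_-,\beta)$ is \emph{not} the locus where all $p$-sections vanish identically --- the nondegeneracy condition forbids this. The fixed loci are indexed by decorated trees: on each vertex component exactly one $p_{k_v}$ is nonvanishing, and edges carry lines where exactly two of the $p_j$ are nonzero. The fixed locus therefore has an intrinsically combinatorial structure, and any comparison with GW theory has to be made at the level of graph sums, not as a single global identification of $\T$-localized obstruction theories. Relatedly, the Euler twist in the extended GLSM is by $\bigoplus_{i=1}^M R\pi_*\cL^{\otimes w_i}$ (the $x$-directions), not by $\bigoplus_j R\pi_* \cL_{p_j}$; after localization to a vertex it is the $x$-bundles together with the other $p$-bundles that supply the twisting data $\cT_k^W$.

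There are two essential ingredients your plan omits. First, the GLSM is built with $\epsilon=0$ stability (no rational tails, $\omega_{\log}$-twisted $p$-bundles), while stable maps use $\epsilon=\infty$; the vertex moduli on the GLSM side are $\epsilon=0$ weighted spin curves in the sense of Ross--Ruan, and one needs a wall-crossing result (the paper's Theorem \ref{thm:wallcrossing}, a twisted version of the main theorem of \cite{RR}) to compare with $\epsilon=\infty$, before any quantum Riemann--Roch machinery can be applied. Second, once the vertex contributions on both sides are matched (via wall-crossing plus a Tseng-type symplectomorphism with the \emph{same} operator $\Delta$, the agreement being justified by the shift in dilaton coordinates, not just by a formal twist), one still has to glue the vertex comparisons together along edges. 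The paper does this through a CCIT-style characterization (Theorem \ref{thm:conechar}) of points of the formal GLSM subspace by conditions (C1)--(C3): pole structure, edge recursion, and restriction to the vertex cones. Your proposal has no analogue of this recursive characterization; without it, knowing that the vertex theories agree does not by itself imply that the full Lagrangian cones agree.

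Finally, the claim that \textbf{(A1)} and \textbf{(A2)} are needed for Theorem 2 is incorrect --- the paper states explicitly that Theorem 2 holds without them; they are only invoked in Theorem 1 to ensure that the non-equivariant limit of the composed symplectomorphism exists and lands in the narrow/ambient state spaces.
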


From here, Theorem 1 follows by carefully taking the non-equivariant limit of the composition in diagram \eqref{diagram}. Although our proof of Theorem 2 does not require (\textbf{A1}) and (\textbf{A2}) to hold, the existence of the non-equivariant limit does require these additional assumptions. These assumptions generalize the ``Fermat-type'' condition in the hypersurface case, which is the only setting in which the LG/CY correspondence is currently known.  

We expect Theorem 2 to hold in much greater generality. In particular, our methods should generalize to prove Theorem 2 for a large class of GLSMs with strong torus actions. Our restriction to complete intersections was mostly for pedagogical reasons. In particular, it provides a natural class of GLSMs where the diagram \eqref{diagram} can be made explicit without cluttering the results with an overabundance of notation.

\subsection{Precise statements of results}
\label{precise}

As above, let 
\[Z=Z(F_1,\dots,F_N) \subset \P(w_1, \ldots, w_M)\]
be a complete intersection in weighted projective space defined by the vanishing of quasihomogeneous polynomials $F_1, \ldots, F_N$ of degrees $d_1, \ldots, d_N$.  (We do not yet require that $Z$ be Calabi--Yau.)  Let $G = \C^*$ act on $V=\C^{M+N}$ with weights
\[(w_1, \ldots, w_M, -d_1, \ldots, -d_N).\]
We denote the coordinates on $V$ by $(x_1, \ldots, x_M, p_1, \ldots, p_N)$.

There are two GIT quotients of the form $[V\GIT_{\theta} G]$, depending on whether $\theta \in \Hom(G,\C^*) \cong \Z$ is positive or negative.  Explicitly, these are
\[X_+ := \bigoplus_{j=1}^N \O_{\P(w_1, \ldots, w_M)}(-d_j)\]
and
\[X_- :=\bigoplus_{i=1}^M \O_{\P(d_1, \ldots, d_N)}(-w_i).\]
For each of these choices, one obtains a GLSM with superpotential $W= \sum_j p_jF_j(x_1, \ldots, x_M)$.

Most of this paper concerns the negative phase.  In this case, the GLSM consists of the following basic ingredients, which we describe explicitly in Section \ref{GLSM}:
\begin{enumerate}
\item a ``narrow state space", which is a vector subspace \[\H^W \subset H_{CR}^*(X_-);\]
\item a moduli space $\QM^W_{g,n}(X_-,\beta)$ equipped with cotangent line classes 
\[
\psi_i\in H^2(\QM^W_{g,n}(X_-,\beta))
\]
and evaluation maps 
\[
\ev_i: \QM^W_{g,n}(X_-,\beta) \rightarrow \I X_-
\]
for $i\in\{1,\dots,n\}$, and a decomposition into ``broad" and ``narrow" components;
\item a virtual fundamental class 
\[
[\QM^W_{g,n}(X_-,\beta)]^{\vir}\in H_*\left(\QM^W_{g,n}(X_-,\beta)\right)
\]
defined for all narrow components of the moduli space.
\end{enumerate}

For any choice $\phi_1,\dots,\phi_n\in\H^W$, these ingredients can be combined to define narrow GLSM correlators
\[
\langle \phi_1 \psi^{a_1} \cdots \phi_n \psi^{a_n}\rangle^{X_-, W}_{g,n,\beta}:=\int\limits_{[\QM^W_{g,n}(X_-,\beta)]^{\vir}} \ev_1^*(\phi_1)\psi_1^{a_1} \cdots \ev_n^*(\phi_n)\psi_n^{a_n}\in\Q.
\]

In genus zero, using a natural action of the torus $\T=(\C^*)^N$, the basic ingredients can be extended $\T$-equivariantly, and we define extended GLSM correlators for {\it any} cohomology insertions $\phi_1,\dots,\phi_n\in H_{CR}^*(X_-)$, denoted
\[
\langle \phi_1 \psi^{a_1} \cdots \phi_n \psi^{a_n}\rangle^{X_-, W,\T}_{0,n,\beta}.
\]
The extended correlators are useful in the statements and proofs of our main results, since they allow us to consider all of $H^*_{CR}(X_-)$ as a state space. They should not be confused, however, with the so-called ``broad insertions" in quantum singularity theory.

In analogy, the $\T$-equivariant Gromov--Witten (GW) invariants of $X_-$, denoted
\[
\langle \phi_1 \psi^{a_1} \cdots \phi_n \psi^{a_n}\rangle^{X_-,\T}_{g,n,\beta},
\]
are also defined for $\phi_1,\dots,\phi_n\in H_{CR}^*(X_-)$. Since GW invariants satisfy the string equation, dilaton equation, and topological recursion relations, the genus-zero GW invariants can be placed in the language of Givental's formalism. In particular, there is an infinite-dimensional symplectic vector space $\V_\T^{X_-}$ and an over-ruled Lagrangian cone 
\[
\cL_\T^{X_-}\subset\V_\T^{X_-}
\] 
that encodes all genus-zero GW invariants of $X_-$. More specifically, a formal germ $\hat\cL_\T^{X_-}$ of $\cL_\T^{X_-}$ is given by points of the form
\[
 -\mathbf{1}z + \bftau(z) +\sum_{n,\beta,\mu} \frac{Q^\beta}{n!}\left\langle \bftau(\psi)^n\; \frac{\Phi_\mu}{-z-\psi}\right\rangle^{X,\T}_{0,n+1,\beta} \Phi^\mu
\]
(see Section \ref{GWtheory} for notation and precise definitions). 

To compare the GLSM invariants of $(X_-,W)$ with the GW invariants of $X_-$, we also encode the GLSM invariants in $\V_\T^{X_-}$. Specifically, we define $\hat\cL_\T^{X_-,W}$ to be the formal subspace consisting of points of the form
\[
I_\T^{X_-,W}(Q,-z)+\bt(z) +\sum_{n,\beta,\mu} \frac{Q^\beta}{n!}\left\langle \bt(\psi)^n\; \frac{\Phi_\mu}{-z-\psi}\right\rangle^{X_-,W,\T}_{0,n+1,\beta} \Phi^\mu,
\]
where $I_\T^{X_-,W}$ is an explicit cohomology-valued series. Since the GLSM invariants do not, a priori, satisfy the string and dilaton equations, $\hat\cL_\T^{X_-,W}$ does not necessarily have the same geometric properties as its GW analogue. However, our first theorem states that the formal subspace $\hat\cL_\T^{X_-,W}$ encodes all of the structure of the GW over-ruled Lagrangian cone $\cL_\T^{X_-}$.

\begin{theorem}\label{maintheorem}
With notation as above, $\hat\cL_\T^{X_-,W}$ is a formal germ of the over-ruled Lagrangian cone $\cL_\T^{X_-}$.
\end{theorem}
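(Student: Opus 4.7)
The plan is to prove Theorem \ref{maintheorem} by verifying Givental's $\T$-localization criterion for membership in the over-ruled Lagrangian cone $\cL_\T^{X_-}$. This criterion expresses the condition $f(z)\in\cL_\T^{X_-}$ in terms of the pole structure and graph-sum recursions satisfied by the restrictions $\iota_\sigma^* f(z)$ at each $\T$-fixed component $\sigma$ of the inertia stack $\I X_-$. Once the restrictions of $\hat\cL_\T^{X_-,W}$ at the $\T$-fixed points are shown to obey this criterion, the inclusion $\hat\cL_\T^{X_-,W}\subset\cL_\T^{X_-}$ follows, and since both families are parameterized by $H^*_{CR}(X_-)$-many directions through their $\bt$-variables, this inclusion promotes to the statement that $\hat\cL_\T^{X_-,W}$ is a formal germ of $\cL_\T^{X_-}$.

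The main tool is $\T$-equivariant localization on $\QM^W_{0,n}(X_-,\beta)$, where $\T=(\C^*)^N$ acts by scaling the $p$-coordinates of $V$. The lift of this action introduces an equivariant weight into $W=\sum_j p_j F_j$ which can be absorbed into the cosection defining $[\QM^W_{0,n}(X_-,\beta)]^{\vir}$, yielding a $\T$-equivariant refinement of the GLSM virtual class. Since $\T$ acts trivially on $X_-$ itself, the $\T$-fixed locus of $\QM^W_{0,n}(X_-,\beta)$ is concentrated on narrow components where all $p$-field sections vanish identically, and this fixed locus is naturally identified with natural substacks in the ordinary quasimap moduli of $X_-$.

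The theorem then reduces to a comparison of $\T$-equivariant virtual classes on the identified fixed loci. On the narrow sector, the GLSM virtual class splits as the product of a quasimap-type virtual class for $X_-$ with the inverse equivariant Euler class of the obstruction contribution coming from the $p$-fields, assembled from universal line bundles of degrees $d_1,\ldots,d_N$. Evaluating this Euler class on families of stable maps to $X_-$ produces a product of hypergeometric ratios indexed by the monomial structure of the $F_j$ and the degree data of the source; stratum by stratum, this product is expected to match precisely the twisted-sector expression for $I_\T^{X_-,W}(Q,-z)$.

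The central technical difficulty will be in the orbifold bookkeeping at twisted marked points, nodes, and broad components of $\QM^W$. One must verify that the age shifts associated with twisted sectors yield exactly the $\Gamma$-type factors contributing to $I_\T^{X_-,W}$, and that broad components of $\QM^W$ either lie outside the $\T$-fixed locus or contribute trivially under the chosen action. With these identifications in place, the pointwise fixed-point comparison should extend to a statement about the cone itself via a standard Coates--Givental-type argument exploiting the string and dilaton equations together with topological recursion relations on the GW side, completing the proof.
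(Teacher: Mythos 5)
Your overall framework---virtual localization, pole and recursion analysis at fixed loci, and a CCIT-style cone characterization---is the right one, but the proposal rests on a misreading of the $\T$-action that derails the central technical step.

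First, $\T=(\C^*)^N$ does \emph{not} act trivially on $X_-=\bigoplus_i\O_{\P(\vec d)}(-w_i)$. It scales the $p$-coordinates, hence acts on the base $\P(\vec d)$ with $N$ isolated fixed points $P_1,\dots,P_N$, and the $\T$-fixed loci of $\QM^W_{0,n}(X_-,\beta)$ are \emph{not} ``narrow components where all $p$-field sections vanish identically'' (that would violate nondegeneracy of LG quasimaps); they are indexed by decorated trees whose vertices are assigned fixed points $P_k$. You have also swapped the roles of the $x$-fields and $p$-fields: the extended GLSM virtual class is the critical-locus class on $\QM^W(\P(\vec d),\beta)$ capped with $e_\T^{-1}\left(\bigoplus_i R\pi_*\cL^{\otimes w_i}\right)$, an Euler class of the $x$-field bundle, not the $p$-fields. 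As a result of misreading the geometry, your proposal never identifies the actual vertex moduli: over a stable vertex one gets a moduli space of $\epsilon=0$ weighted spin curves, with the isomorphism $L^{\otimes d_{k_v}}\cong\omega_{\log}\otimes\O(-B)$ coming from the surviving $p_{k_v}$-section. The technical heart of the paper---absent from your proposal---is comparing these twisted spin-curve correlators to twisted GW invariants of the orbifold point $P_k$, which requires (a) a wall-crossing theorem for $\epsilon$-stable weighted spin curves following Ross--Ruan, (b) a Tseng-type quantum Riemann--Roch symplectomorphism, and (c) an untwisted comparison matching the GW $J$-function of $P_k$ with the spin $J$-function under a shift accounting for the different dilaton shifts.

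Second, your closing appeal to string, dilaton, and TRR ``on the GW side'' does not close the argument: the GLSM side has no \emph{a priori} SE/DE/TRR, and you must instead characterize membership in $\cL_\T^{X_-}$ purely through the three conditions (pole location, residue recursion from edge removal, and the vertex restriction landing on the twisted spin cone) and then show both $\hat\cL_\T^{X_-,W}$ and the matching family of GW-cone points satisfy all three. Your instinct about using a CCIT Theorem~41-type criterion is correct, but the specific vertex condition involves the cone $\cL^{P_k,W}_{c_k}$ of twisted spin invariants, and the identification $\cL^{P_k,W}_{c_k}=\cL^{P_k}_{c_k}$ is exactly the input that is missing here.
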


A consequence of Theorem \ref{maintheorem} is that all extended genus-zero GLSM invariants of $(X_-,W)$ are determined from the equivariant genus-zero GW invariants of $X_-$, and vice versa.  In light of this, we denote $\cL_\T^{X_-,W} = \cL_\T^{X_-}$.

When $N=1$, as mentioned above, the GLSM of $(X_-,W)$ is equivalent to the quantum singularity theory of the polynomial $F = F_1$, and in this case, Theorem \ref{maintheorem} is related to the MLK correspondence  of Lee--Priddis--Shoemaker.  Moreover, since $\text{GLSM}(X_+, W)=\text{GW}(Z)$, Theorem \ref{maintheorem} can be viewed as an analogue in the negative phase of quantum Serre duality, which gives a symplectomorphism 
$\phi_\T^+:\mathcal{V}_{\T}^{X_+}\stackrel{\sim}{\rightarrow}\mathcal{V}_{\T}^{Z}$ such that $\phi_\T^+\left(\cL_\T^{X_-}\right)=\cL_\T^{Z}$.  Here, $\cL_\T^Z$ is the Lagrangian cone associated to the $\T$-equivariant ambient GW theory of $Z$.

Thus equipped with GW/GLSM correspondences relating the two theories associated to $(X_{\theta}, W)$ for each fixed $\theta$, what remains is to study how these theories change when $\theta$ varies.  For this, we will assume the Calabi--Yau condition:
\[\sum_{i=1}^M w_i = \sum_{j=1}^N d_j.\]

In the GW case, the connection between the GW theory of $X_+$ and $X_-$ is the subject of Ruan's crepant transformation conjecture (CTC) \cite{CoatesRuan}, which, roughly speaking, states that there is a symplectomorphism $\bU_\T:\V_\T^{X_-}\stackrel{\sim}{\rightarrow} \V_\T^{X_+}$ such that $\bU_{\T}\left(\cL_\T^{X_-}\right)=\cL_\T^{X_+}$ after analytic continuation.\footnote{The analytic continuation should be understood in the following sense. The toric mirror theorem of Coates--Corti--Iritani--Tseng \cite{CCIT2} provides $I$-functions for each GIT phase which entirely determine the GW Lagrangian cones. The $I$-functions depend on local K\"ahler parameters, and Coates--Iritani--Jiang \cite{CIJ} construct an explicit global K\"ahler moduli space along which the $I$-functions can be analytically continued from one phase to the other.} In our particular situation, the CTC is a very special application of the far-reaching toric CTC proved by Coates--Iritani--Jiang \cite{CIJ}. In addition to computing $\bU_\T$, Coates--Iritani--Jiang prove that it is induced by a Fourier--Mukai transformation (c.f. Section \ref{transitions}).

When combined with quantum Serre duality and the CTC, Theorem \ref{maintheorem} gives the analogous phase transition for the extended GLSM.  More precisely, it implies that there is a $\T$-equivariant symplectomorphism $\bV_\T:\V_\T^{X_-}\rightarrow\V_\T^Z$ such that 
\[
\bV_\T\left(\cL_\T^{X_-,W}\right) = \cL_\T^{Z},
\] 
after analytic continuation.

In order to use this statement to relate the non-extended theories, we must verify that it is possible to take a meaningful non-equivariant limit of $\bV_\T$, after restricting to the narrow subspace.  The existence of such non-equivariant limits in general is a subtle question. We prove in Section \ref{transitions} that, under assumptions \textbf{(A1)} and \textbf{(A2)}, we can take a non-equivariant limit, and the resulting symplectomorphism identifies the non-equivariant cones $\cL^{X_-,W}$ and $\cL^Z$ associated to the narrow GLSM and the ambient GW theory of $Z$, respectively.

\begin{theorem}
	\label{maintheorem2}
	Under the assumptions (\textbf{A1}) and (\textbf{A2}), there exists a symplectomorphism $\bV:\V^{X_-,W}\stackrel{\sim}{\rightarrow} \V^{Z}$ that identifies the Lagrangian cones after analytic continuation and specializing $Q=1$:
	\[
	\bV\left(\cL^{X_-,W}\right)=\cL^Z.
	\]
\end{theorem}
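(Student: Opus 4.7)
The plan is to define $\bV$ as the non-equivariant limit of the composite $\T$-equivariant symplectomorphism
\[
\bV_\T \;:=\; \phi_\T^+ \circ \bU_\T \;:\; \V_\T^{X_-} \xrightarrow{\sim} \V_\T^Z,
\]
where we have used Theorem~\ref{maintheorem} to identify $\cL_\T^{X_-,W}$ with $\cL_\T^{X_-}$, the map $\bU_\T$ is the Fourier--Mukai-induced CTC symplectomorphism of \cite{CIJ}, and $\phi_\T^+$ is the quantum Serre duality map of Coates--Givental--Tseng. After analytic continuation in the K\"ahler parameter along the global moduli space of \cite{CIJ}, composing the three equivalences in diagram \eqref{diagram} immediately yields $\bV_\T(\cL_\T^{X_-,W}) = \cL_\T^Z$ as an identity of $\T$-equivariant Lagrangian cones.

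The crux of the proof is to show that, after restricting the source to the narrow subspace $\H^W$ and the target to the ambient cohomology of $Z$, the operator $\bV_\T$ admits a finite limit as the equivariant parameters $\lambda_1,\dots,\lambda_N \to 0$. This is the principal obstacle: individually, both factors are singular at $\lambda=0$, since $\phi_\T^+$ involves the inverse equivariant Euler class of $\bigoplus_j \O(-d_j)$ on $X_+$, and the hypergeometric $I$-function components of $\bU_\T$ contain Gamma-function ratios that degenerate on specific twisted sectors of $X_-$.

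To control these singularities, I would decompose the computation over the components of $\I X_-$, each indexed by an age $m\in\Q/\Z$ with $mw_{i_0}\in\Z$ for some $i_0$. Assumption (\textbf{A1}) ensures that these ages take the uniform shape $m = k/d_j$ and that the twisted sectors of $X_-$, $X_+$, and $Z$ match compatibly under CTC and QSD, so that narrow sectors on the source map to genuine (rather than spurious) inertia components on the target. On the sector of age $m$, each index $j$ with $md_j\in\Z$ contributes a simple pole in $\lambda$ to the Euler-class factors appearing in $\phi_\T^+$, while each $i$ with $mw_i\in\Z$ produces a compensating zero from the narrow projection and the QSD normalization. Assumption (\textbf{A2}) is precisely the pointwise inequality guaranteeing that the zeros dominate the poles on every relevant sector, so that the restriction of $\bV_\T$ to $\H^W$ is regular at $\lambda=0$.

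With the limit $\bV := \lim_{\lambda\to 0}\bV_\T|_{\H^W}$ in hand, the identity $\bV(\cL^{X_-,W}) = \cL^Z$ follows by specializing the equivariant statement $\bV_\T(\cL_\T^{X_-,W}) = \cL_\T^Z$, using that $\cL^{X_-,W}$ is by definition the non-equivariant limit of the narrow part of $\cL_\T^{X_-,W}$ and analogously for $\cL^Z$. Finally, setting $Q=1$ is legitimate because, after analytic continuation along the global K\"ahler moduli of \cite{CIJ}, both cones are expressible as convergent power series in a local coordinate at the appropriate large-volume chart. The main difficulty throughout is the careful sector-by-sector bookkeeping that verifies the cancellation predicted by (\textbf{A2}); everything else reduces to assembling the already established equivariant results.
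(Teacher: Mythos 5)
Your overall strategy matches the paper's: define $\bV$ as a non-equivariant limit of the composite $\phi_\T^+\circ\bU_\T$ (after using Theorem~\ref{maintheorem} to replace $\cL_\T^{X_-,W}$ by the GW cone $\cL_\T^{X_-}$), and use assumptions (\textbf{A1}) and (\textbf{A2}) to balance the Euler-class poles in $\phi_\T^+$ against the zeros produced by narrowness and the Fourier--Mukai structure of $\bU_\T$. This part of the argument is in the right spirit, and your identification of (\textbf{A2}) as the inequality that forces zeros to dominate poles sector-by-sector is accurate.

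However, there is a genuine gap, and it is the technical core of the paper's Lemma~\ref{lastlemma}. You propose to show that $\bV_\T$ restricted to the narrow subspace $\H^W$ admits a non-equivariant limit, and then ``specialize'' the equivariant cone identity. The problem is that $\cL^{X_-,W}$ is \emph{not} obtained by intersecting $\cL_\T^{X_-,W}$ with the narrow subspace. As Corollary~\ref{cor:nelimit} and Remark~7.8 make clear, one obtains $\cL^{X_-,W}$ as the non-equivariant limit of a \emph{pre-narrow} subcone $\cL_{\T,\mathrm{pre}}^{X_-,W}$, whose points are \emph{not} supported on $\H^W$: they necessarily contain dual insertions of the form $\Phi^\mu = H_{(m)}^a e_\T\bigl(\bigoplus_{i\in I}\O_{X_{(m)}}(w_i)\bigr)$ with $m\notin\text{nar}$. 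Regularity of $\bV_\T|_{\V^{X_-,W}}$ at $\alpha=0$ therefore says nothing directly about what happens to these broad pieces. The paper's Lemma~\ref{lastlemma} must separately establish (ii) that the image under $\phi_\T^+\circ\bU_\T$ of a general point of $\cL_{\T,\mathrm{pre}}^{X_-,W}$ still has a non-equivariant limit, via a Vandermonde analysis of $\overline\bU_\T$ combined with the extra factors of $\alpha$ contributed by the Euler classes, and (iii) that the broad contributions end up in the kernel of $i^*$ in the limit, so that the two limits commute. Without these two steps, there is no way to pass from ``$\bV_\T$ restricted to $\H^W$ has a limit'' to ``$\bV$ carries $\cL^{X_-,W}$ onto $\cL^Z$.'' Your proposal silently assumes the commutation, which is exactly what must be proved.

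A secondary imprecision: the map $\phi_\T^+$ (Theorem~\ref{thm:qsd}) lands in $\V_\T^{\P(\vec w),e}$, not in $\V_\T^Z$, and the passage to $\cL^Z$ requires the quantum Lefschetz theorem (Theorem~\ref{thm:qlt}) applied \emph{after} taking non-equivariant limits and pulling back by $i^*$. This is an additional input, not a mere specialization of an equivariant identity $\bV_\T(\cL_\T^{X_-,W})=\cL_\T^Z$, and your proposal conflates the two steps.
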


\subsection{Plan of the paper}

We begin, in Section \ref{GLSM}, by describing the definition and basic properties of the genus-zero GLSM of $(X_-,W)$, following Fan--Jarvis--Ruan \cite{FJRnew}. In Section \ref{GWtheory}, we introduce GW theory and Givental's formalism, allowing us to precisely define the Lagrangian cones and formal subspaces that appear in the statements of Theorems \ref{maintheorem} and \ref{maintheorem2}.

The proof of Theorem \ref{maintheorem} is contained in Sections \ref{localization}, \ref{hybrid}, and \ref{proof}. The key idea of the proof is to express the GLSM invariants, via virtual localization, in terms of graph sums. The graph sum expression is written explicitly in Section \ref{localization}. Each vertex in a localization graph contributes an integral over a moduli space of weighted spin curves, and we discuss these integrals in Section \ref{hybrid}. In particular, we prove a vertex correspondence comparing these integrals to twisted GW invariants of an orbifold point. In Section \ref{proof}, we prove a characterization of points on the formal subspace $\hat\cL_{\T}^{X_-,W}$ in terms of a recursive structure on graph sums obtained by removing edges. We use this characterization, along with the vertex correspondence, to conclude the proof of Theorem \ref{maintheorem}.

Theorem \ref{maintheorem2} is proved in Section \ref{transitions} by carefully studying the existence of non-equivariant limits.

\subsection{Acknowledgments}

The authors thank Pedro Acosta for many enlightening conversations in the early days of this project, and Mark Shoemaker for carefully explaining several technical aspects of \cite{LPS}.  Thanks are also due to Yongbin Ruan for introducing the authors to the concepts and constructions of the gauged linear sigma model.  The first author acknowledges the generous support of Dr.~Max R\"ossler, the Walter Haefner Foundation, and the ETH Foundation.  The second author has been supported by NSF RTG grants DMS-0943832 and DMS-1045119 and the NSF postdoctoral research fellowship DMS-1401873.

\section{Definitions and Setup}
\label{GLSM}

The general construction of the GLSM was given by Fan--Jarvis--Ruan in \cite{FJRnew}.  In this section, we discuss the special class of GLSMs which arise in the study of complete intersections.

\subsection{Input data}

Fix a vector space $V = \C^{M+N}$ with coordinates $(x_1, \ldots, x_M, p_1, \ldots, p_N)$, and choose positive integers $w_1, \ldots, w_M$ and $d_1, \ldots, d_N$.  For each $j \in \{1, \ldots, N\}$, let $F_j(x_1, \ldots, x_M)$ be a quasihomogeneous polynomial of weights $(w_1, \ldots, w_M)$ and degree $d_j$; that is,
\[
F_j(\lambda^{w_1}x_1, \ldots, \lambda^{w_M}x_M) = \lambda^{d_j} F_j(x_1, \ldots, x_M)
\]
for any $\lambda \in \C$. We assume $\gcd(w_1,\dots,w_M,d_1,\dots,d_N)=1$.

Each equation $\{F_j = 0\}$ defines a hypersurface in the weighted projective space $\P(\vw) = \P(w_1, \ldots, w_M)$.  Assume that the $F_j$ are {\it nondegenerate} in the sense that (1) the hypersurfaces $\{F_j  = 0\}$ are all smooth and (2) they intersect transversally.

Let
\[G := \{(\lambda^{w_1}, \ldots, \lambda^{w_M}, \lambda^{-d_1}, \ldots, \lambda^{-d_N})\; | \; \lambda \in \C^*\} \cong \C^*\]
act diagonally on $V$.  For a nontrivial character $\theta \in \Hom_{\Z}(G, \C^*)\cong \Z$, one obtains a GIT quotient $X_{\theta} = [V \GIT_{\theta} G]$.  Until stated otherwise, we will always take $\theta < 0$, so the resulting GIT quotient is
\begin{equation}\label{GITpresentation}
X:=X_-= \bigoplus_{i=1}^M \O_{\P(\vd)} (-w_i).
\end{equation}
(Observe that we denote $X_-$ simply by $X$ in what follows, to ease the notation.)

The {\it superpotential} of the theory is the function $W: X \rightarrow \C$ defined by
\[W(x_1, \ldots, x_M, p_1, \ldots, p_N) := \sum_{j=1}^N p_j F_j(x_1, \ldots, x_M).\]

\subsection{State space}

The state space of the GLSM is, by definition, the vector space
\[H^*_{CR}(X, W^{+\infty}; \C),\]
where $W^{+ \infty}$ is a Milnor fiber of $W$, defined by $W^{+ \infty} = W^{-1}(M)$ for a sufficiently large real number $M$.

This state space has summands indexed by the components of the inertia stack $\I X$, which, in turn, are labeled by elements $g \in G$ with nontrivial fixed-point set $\text{Fix}(g)\subset X$.  A summand indexed by $g$ is said to be {\it narrow} if $\text{Fix}(g)$ is compact; otherwise, the summand is {\it broad}.

Let us describe the narrow part of the state space more concretely.  The only $g=(\lambda^{w_1}, \ldots, \lambda^{w_M}, \lambda^{-d_1}, \ldots, \lambda^{-d_N})$ with nontrivial fixed-point set are those for which $\lambda^{d_j} =1$ for some $j$. In particular, $\lambda=\re^{2\pi\ri m}$ for some $m\in \Q/\Z$.  Furthermore, elements $(\vec{x}, \vec{p}) \in \text{Fix}(g)$ must have $x_i = 0$ whenever $\lambda^{w_i} \neq 1$, but there is no constraint on the $x_i$ for which $\lambda^{w_i} =1$.  As a result, $\text{Fix}(g)$ is compact exactly when there is no $i$ such that $\lambda^{w_i} =1$, and in this case, it equals
\begin{equation}
	\label{Plambda}
	X_{(m)} :=\left\{p_j = 0 \text{ for all } j \text{ with } md_j\notin\Z\right\} \subset \P(\vd)\subset X.
\end{equation}
Furthermore, for such $g$, we have $W|_{\text{Fix}(g)} \equiv 0$, so the relative cohomology group in the definition of the state space restricts to an absolute cohomology group.

To summarize, we have the following:

\begin{definition}
	Let
	\[\text{nar} := \left\{ m\in\Q/\Z \; \bigg | \; \exists \; j \text{ such that }md_j\in\Z, \; \not \exists \; i \text{ such that } mw_i\in\Z\right\}.\]
	The {\it narrow state space} is
	\[\mathcal{H}^W:= \bigoplus_{m \in \text{nar}} H^*(X_{(m)})\subset H_{CR}^*(X),\]
	where $X_{(m)}$ is defined as in (\ref{Plambda}).
\end{definition}

\subsection{Moduli space}

Fix a genus $g$, a degree $\beta \in \Q$, and a nonnegative integer $n$.

\begin{definition}
	\label{LGmap}
	A {\it stable Landau--Ginzburg quasi-map} to the pair $(X,W)$ consists of an $n$-pointed prestable orbifold curve $(C; q_1, \ldots, q_n)$ of genus $g$, an orbifold line bundle $L$ of degree $\beta$ on $C$, and a section
	\[\sigma \in \Gamma\left(\bigoplus_{i=1}^M L^{\otimes w_i} \oplus \bigoplus_{j=1}^N (L^{\otimes - d_j} \otimes \omega_{C,\log})\right),\]
	where
	\[\omega_{C,\log} := \omega_C([q_1] + \cdots + [q_n]).\]
	We denote the components of $\sigma$ by
	\[\sigma = (x_1, \ldots, x_M, p_1, \ldots, p_N).\]
	We require that this data satisfies the following conditions:
\begin{enumerate}
\item \emph{Nondegeneracy} (for $\theta<0$): The points $q\in C$ satisfying $p_1(q)=\dots=p_N(q)=0$ are finite and disjoint from the marks and nodes of $C$. We refer to such points as \emph{basepoints}.
\item \emph{Representability}: For every $q \in C$ with isotropy group $G_q$, the homomorphism $G_q \rightarrow \C^*$ giving the action of the isotropy group on the bundle $\bigoplus_i L^{\otimes w_i} \oplus \bigoplus_j L^{\otimes -d_j}$ is injective.
\item \emph{Stability}: $(L^\vee\otimes\omega_{\log})^{\otimes \epsilon}\otimes\omega_{\log}$ is ample for all $\epsilon>0$.
\end{enumerate}

	A {\it morphism} between $(C;q_1, \ldots, q_n; L;\sigma)$ and $(C'; q_1', \ldots, q_n'; L', \sigma')$ consists of a morphism $s: C \rightarrow C'$ such that $s(q_i) = q_i'$, together with a morphism $s^*L' \rightarrow L$ which, in combination with the natural isomorphism $s^*\omega_{C', \log} \xrightarrow{\sim} \omega_{C, \log}$, sends $s^*\sigma'$ to $\sigma$.
\end{definition}

This stability condition is referred to as ``$\epsilon = 0$" stability in the language of \cite{FJRnew}.  In particular, it prohibits the curve $C$ from having rational tails (genus-zero components with only one special point), and it imposes that $\beta < 0$ on any genus-zero component with exactly two special points.

Fan--Jarvis--Ruan prove in \cite{FJRnew} that there is a finite-type, separated, Deligne--Mumford stack $\QM^W_{g,n}(X,\beta)$ parameterizing families of stable Landau--Ginzburg quasi-maps to $(X,W)$ up to isomorphism. This stack admits a perfect obstruction theory
\begin{equation}
	\label{pot}
	\mathbb{E}^{\bullet} = R^\bullet\pi_*\left(\bigoplus_{i=1}^M \mathcal{L}^{\otimes w_i} \oplus \bigoplus_{j=1}^N (\mathcal{L}^{\otimes -d_j} \otimes \omega_{\pi, \log})\right)^{\vee}
\end{equation}
relative to the Artin stack $\mathcal{D}_{g,n,\beta}$ of $n$-pointed genus-$g$ orbifold curves with a degree-$\beta$ line bundle $L$.  Here,
\[
\pi: \mathcal{C} \rightarrow \QM^W_{g,n}(X,\beta)
\]
denotes the universal family and $\mathcal{L}$ the universal line bundle on $\mathcal{C}$.

\subsection{Evaluation maps}

Certain substacks of $\QM^W_{g,n}(X,\beta)$ will be particularly important in what follows.  To define them, recall that if $q$ is a point on an orbifold curve $C$ with isotropy group $\Z_r$ and $L$ is an orbifold line bundle on $C$, then the {\it multiplicity} of $L$ at $q$ is defined as the number $m \in \Q/\Z$ such that the canonical generator of $\Z_r$ acts on the total space of $L$ in local coordinates near $q$ by
\[(x,v) \mapsto (\re^{2\pi \ri \frac{1}{r}} x, \re^{2\pi \ri m} v).\]
For $\m=(m_1,\dots,m_n)$, we define
\[
\QM^W_{g,\m}(X,\beta) \subset \QM^W_{g,n}(X,\beta)
\]
as the open and closed substack consisting of elements for which the multiplicity of $L$ at $q_i$ is equal to $m_i$.  Note that if
\[d := \text{lcm}(d_1, \ldots, d_N),\]
then $m_i \in \frac{1}{d}\Z/\Z$.

To define evaluation maps, let $\varsigma$ be the universal section of the universal bundle $\bigoplus_i \mathcal{L}^{\otimes w_i} \oplus \bigoplus_j (\mathcal{L}^{\otimes - d_j} \otimes \omega_{\pi,\log})$ on $\mathcal{C}$.  If $\Delta_k \subset \mathcal{C}$ denotes the divisor corresponding to the $k$th marked point, then
\[
\varsigma|_{\Delta_k} \in \Gamma\left(\bigoplus_{i=1}^M \mathcal{L}^{\otimes w_i} \oplus \bigoplus_{j=1}^N \mathcal{L}^{\otimes -d_j}\bigg|_{\Delta_k}\right),
\]
using the fact that $\omega_{\pi, \log}|_{\Delta_k}$ is trivial.  Furthermore, the image of this section must be zero on any component on which the action of the isotropy group is nontrivial.  It follows that $\sigma(q_k)$ defines an element of $X_{(m_k)}$.  Thus, we can define evaluation maps to the inertia stack:
\[
\ev_i: \QM^W_{g,\m}(X,\beta) \rightarrow \I X
\]
\[
(C; q_1, \ldots, q_n; L; \sigma) \mapsto \sigma(q_k)\in X_{(m_k)}.
\]

\subsection{Virtual cycle}

The definition of the virtual cycle in the GLSM relies on the cosection technique of Kiem--Li \cite{KL}, which was first applied in this setting by Chang--Li--Li \cite{CL, CLL}.  For the specific case of the GLSM, we refer the reader to \cite{FJRnew} for details. The construction is quite subtle; in particular, a cosection can only be defined on components $\QM^W_{g,\m}(X,\beta)$ for which $m_k \in \text{nar}$ for every $k$. For such a component, consider the substack of $\QM^{W}_{g,\m}(X,\beta)$ consisting of sections $\sigma$ with image in the critical locus of $W$.  By our non-degeneracy assumptions on the polynomials $F_j$, this is equivalent to the requirement that the first $M$ components of $\sigma$ vanish, and we define
\[
\overline{\mathcal{QM}}^W_{g,\m}\left(\P(\vec d),\beta\right):=\{(C,L,\sigma)\;|\; x_i=0 \; \forall i\}.
\]
The cosection technique provides a virtual cycle on this substack: 
\[
\left[\QM^{W}_{g,\m}(X,\beta)\right]^{\vir}\in H_*\left(\QM^{W}_{g,\m}\left(\P(\vec d),\beta\right)\right).
\]
A key result about the stack of sections supported on the critical locus is the following.

\begin{theorem}[Fan--Jarvis--Ruan \cite{FJRnew}]\label{thm:properness}
The substack $\overline{\mathcal{QM}}^W_{g,\m}(\P(\vec d),\beta)$ is proper.
\end{theorem}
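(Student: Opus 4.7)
The plan is to verify the valuative criterion of properness, after first unpacking what the substack parametrizes. Since the $F_j$ are nondegenerate, the critical locus of $W$ inside $X$ is cut out by the equations $x_i = 0$ for all $i$, so a stable LG quasi-map whose section takes values in the critical locus reduces to the data of a prestable orbifold curve $(C; q_1, \ldots, q_n)$, an orbifold line bundle $L$ of degree $\beta$, and sections $p_j \in \Gamma(L^{\otimes -d_j} \otimes \omega_{C, \log})$ whose common vanishing locus is a finite subset of $C$ disjoint from marks and nodes, together with representability and $\epsilon = 0$ stability. In effect, this is a moduli space of ``$\omega_{\log}$-twisted quasi-maps'' from $C$ to $\P(\vd)$, of the sort first studied by Ciocan-Fontanine--Kim, and the strategy is to import their properness arguments to this setting.

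Finite type is already built into the construction of $\QM^W_{g,n}(X,\beta)$, so I only need to treat separatedness and completeness. For separatedness I would argue as follows: given two extensions of a family over a discrete valuation ring, the nondegeneracy condition implies that the extended sections $(p_1,\ldots,p_N)$ generate the relevant line bundle away from finitely many interior points, which pins down the associated rational map to $\P(\vd)$ uniquely. Combined with uniqueness of the stable model of the underlying orbifold curve (forced by the $\epsilon = 0$ condition), the two extensions must coincide.

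The main work, and the principal obstacle, is the completeness half of the valuative criterion. Given a family over the punctured spectrum of a DVR, I would (i) extend the family of prestable orbifold curves by standard stable-reduction; (ii) extend $L$ after a finite base change, using properness of the relative Picard stack of $n$-pointed prestable orbifold curves; and (iii) extend the sections $p_j$. The delicate point in step (iii) is that base points of the extended sections may accumulate at marks or nodes in the central fiber, violating the nondegeneracy condition. I would cure this by a sequence of birational modifications of the central fiber, blowing up the offending points and twisting $L$ along the exceptional components so as to push all base points into the interior. The rational tails produced by this procedure are then contracted by the $\epsilon = 0$ stability condition, which is engineered precisely so that no rational tail can be stable; what remains is a unique stable limit in $\QM^W_{g,\m}(\P(\vd),\beta)$. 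The residual technical burden, bookkeeping for the $\omega_{\log}$-twist and the orbifold multiplicities, does not interact with base-point resolution in any essential way: the $\omega_{\log}$-twist is trivial at smooth interior points, and the multiplicities $\m$ at the marks are locked by openness and closedness of the multiplicity strata, hence preserved in the limit.
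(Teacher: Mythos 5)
The paper gives no proof of this statement: it is quoted directly from Fan--Jarvis--Ruan, and the surrounding text explicitly refers the reader to \cite{FJRnew} for the cosection construction and its supporting technical results. There is therefore no in-paper argument against which to check yours.

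Judged on its own merits, your sketch correctly identifies the substack as a moduli of $\omega_{\log}$-twisted quasi-maps to $\P(\vd)$ and correctly aims at a Ciocan-Fontanine--Kim--Maulik-style valuative criterion, but it contains at least one genuine gap. In step (ii) you extend $L$ ``using properness of the relative Picard stack of $n$-pointed prestable orbifold curves.'' The relative Picard stack of a prestable family is not proper: multidegrees jump in nodal degenerations and line bundles on the generic fiber need not extend, so this step does not go through as stated. The correct mechanism in the quasi-map setting is to extend $L$ and the sections $(p_1,\dots,p_N)$ \emph{jointly}: the nonvanishing of $(p_j)$ away from finitely many interior points defines a rational map from the generic fiber to $\P(\vd)$, one uses properness of the target together with modifications of the family of curves to extend this map, and $L$ is then recovered as a pull-back along it. Decoupling the extension of $L$ from the sections discards exactly the rigidity that makes completeness provable. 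A second, smaller point: dismissing the $\omega_{\log}$-twist as merely local is too quick. The twist shifts the degree of $L^{\otimes -d_j}\otimes\omega_{C,\log}$ on every component produced by semi-stable reduction, and these per-component degree constraints are precisely what govern which base-point configurations can arise in the limit and which components the $\epsilon=0$ stability then forces you to contract; so the bookkeeping does feed back into the resolution step.
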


Integrals against the virtual fundamental class are expressed as follows:

\begin{definition}
	Given
	\[\phi_1, \ldots, \phi_n \in \mathcal{H}^W\]
	and nonnegative integers $a_1, \ldots, a_n$, the associated {\it degree-$\beta$, genus-$g$ GLSM correlator} is defined by
	\[
	\langle \phi_1 \psi^{a_1} \cdots \phi_n \psi^{a_n}\rangle^{X, W}_{g,n,\beta} =
	\int_{[\QM^W_{g,n}(X,\beta)]^{\vir}} \ev_1^*(\phi_1)\psi_1^{a_1} \cdots \ev_n^*(\phi_n)\psi_n^{a_n},
	\]
	where $\psi_i$ is the cotangent line class at the $i$th marked point on the coarse curve.
\end{definition}

Note that if $\phi_k$ is drawn from the component of $\mathcal{H}^W$ indexed by $m_k$, then the above integral is supported on $\QM^W_{g,\m}(X,\beta)$.  Thus, the definition of the correlators makes sense even though a virtual cycle has only been defined on the narrow substacks of $\QM^W_{g,n}(X,\beta)$.

\subsection{Genus zero}

In genus zero, the cosection construction is not needed.  The key point is the following:

\begin{lemma}
	\label{sm}
	Let $\m$ be such that $m_k \in \text{nar}$ for every $k$.  Then
\[
\QM^W_{0,\m}(X,\beta)=\QM^W_{0,\m}(\P(\vec d),\beta).
\] 
In particular, $\QM^W_{0,\m}(X,\beta)$ is proper.
	\begin{proof}
		It suffices to prove that $H^0(C,L^{\otimes w_i})=0$ for each $(C,L,\sigma)\in\QM^W_{0,\m}(X,\beta)$. Let $\sigma = (x_1, \ldots, x_M, p_1, \ldots, p_N)$.  Since the $p_j$ cannot simultaneously vanish everywhere, at least one of the bundles $L^{\otimes - d_j} \otimes \omega_{\log}$ must have nonnegative degree, so
		\[-d_j \beta - 2 + n \geq 0.\]
		Thus, using the fact that $w_i\leq d_j$ for all $i,j$, we have
		\[\deg(L^{\otimes w_i}) = w_i \beta \leq \frac{w_i}{d_j}(n-2) < n-1.\]
		
		On the other hand, the condition that $m_k \in \text{nar}$ means that the isotropy group at $q_k$ acts nontrivially on the fiber of $L^{\otimes w_i}$ for each $i$ and $k$, so the sections $x_i$ must vanish at all $n$ marked points.  When $C$ is smooth, it follows that $x_i \equiv 0$.  More generally, the same argument as above shows that on each irreducible component $C' \subset C$, we have $\deg(L^{\otimes w_i}|_{C'}) < n' -1$, where $n'$ is the number of marks and nodes on $C'$.  An inductive argument on the number of components then implies, again, that $x_i \equiv 0$.
	\end{proof}
\end{lemma}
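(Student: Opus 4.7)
The plan is to show that on any stable LG quasi-map $(C,L,\sigma) \in \QM^W_{0,\m}(X,\beta)$ with narrow markings ($m_k \in \text{nar}$ for all $k$), the sections $x_1,\ldots,x_M$ of $\sigma$ must vanish identically. Once this is done, the quasi-map takes values entirely in the critical locus $\{x_1 = \cdots = x_M = 0\} \subset X$, which is precisely the substack $\P(\vd)$, so $\QM^W_{0,\m}(X,\beta) = \QM^W_{0,\m}(\P(\vd),\beta)$, and properness follows from Theorem \ref{thm:properness}.

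The heart of the argument is a degree count on a single smooth component. The nondegeneracy condition for $\theta<0$ prevents $p_1,\ldots,p_N$ from vanishing simultaneously, so on any smooth genus-zero orbifold curve $C$, some $p_j$ is a nonzero global section of $L^{\otimes -d_j} \otimes \omega_{C,\log}$. Nonnegativity of this bundle's degree combined with $\deg \omega_{C,\log} = n-2$ forces $d_j\beta \leq n-2$. Using the natural inequality $w_i \leq d_j$ (which should follow from nondegeneracy of the complete intersection, since $F_j$ is a nontrivial quasihomogeneous polynomial of degree $d_j$ in the $x_i$), one obtains $\deg L^{\otimes w_i} = w_i\beta < n-1$. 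Meanwhile, the narrowness of each marked point means the local isotropy acts nontrivially on the fiber of $L^{\otimes w_i}$, so $x_i(q_k) = 0$ for every $k$. Since $x_i$ then vanishes at $n$ marked points, the degree of its zero divisor is at least $n$, contradicting the bound $< n-1$ unless $x_i \equiv 0$.

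For reducible curves I would induct over the dual graph. On each irreducible component $C' \subset C$, the same argument applies with $n'$ equal to the number of marks plus nodes on $C'$, using that $\omega_{C,\log}|_{C'}$ has log poles at both marks and nodes. At each node, vanishing of $x_i$ can be bootstrapped: either from narrowness on one side of the node via the balancing condition on orbifold line bundles at nodes, or from the inductive hypothesis after removing a leaf of the dual graph. In either case, $x_i$ acquires enough zeros on $C'$ to exceed the degree bound there and must vanish identically.

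The main obstacle I anticipate is the precise bookkeeping of orbifold contributions to degrees and vanishing multiplicities. At a stacky point with isotropy of order $r$, a zero of a section contributes $1/r$ rather than $1$ to the divisor degree, so one must verify that the strict inequality $\deg L^{\otimes w_i} < n-1$ has enough slack to absorb these fractional corrections while still forcing $x_i \equiv 0$. A secondary subtlety is the justification of $w_i \leq d_j$, which may require a brief separate argument from the smoothness and transversality assumptions on the $F_j$. Once these are handled, the inductive structure on the dual graph makes the reducible case formal.
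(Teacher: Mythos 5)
Your argument follows the paper's proof step for step: the same degree bound from the nonvanishing of some $p_j$, the same use of narrowness to force $x_i$ to vanish at the markings, and the same component-by-component induction for nodal $C$.

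Your second flagged concern is minor. The inequality $w_i\leq d_j$ does follow from part (1) of the nondegeneracy assumption: if $\{F_j=0\}$ is smooth then $F_j$ must involve every variable $x_i$ (otherwise both $F_j$ and all its partials vanish at $[0:\cdots:1_i:\cdots:0]$), and any monomial containing $x_i$ has weighted degree at least $w_i$. The paper uses this silently.

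Your first concern, about orbifold degree bookkeeping, is more substantive than you seem to realize, and the argument as written (yours and the paper's) does not quite close. The clean formulation is through the pushforward to the coarse curve: $H^0(C,L^{\otimes w_i})=H^0(|C|,\pi_*L^{\otimes w_i})$ with
\[
\deg\pi_*L^{\otimes w_i}\;=\;w_i\beta-\sum_{k=1}^{n}\langle w_im_k\rangle,
\]
so ``vanishing at the $n$ marked points'' is already absorbed into the pushforward and what one must actually prove is that this quantity is negative. The nonvanishing of $p_j$ gives, by the same pushforward count, $d_j\beta\leq(n-2)-\sum_k\langle-d_jm_k\rangle$, and (after discarding the trivial case $\beta<0$) the required negativity becomes
\[
\sum_{k=1}^n\Big(\langle w_im_k\rangle+\tfrac{w_i}{d_j}\langle-d_jm_k\rangle\Big)\;>\;\tfrac{w_i}{d_j}\,(n-2),
\]
for which it suffices that every summand be at least $w_i/d_j$. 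That lower bound holds under assumption \textbf{(A1)}, i.e.\ $w_i\mid d_j$: if $d_jm_k\in\Z$ then $w_im_k\in\tfrac{w_i}{d_j}\Z\setminus\Z$ so $\langle w_im_k\rangle\geq w_i/d_j$, and if $d_jm_k\notin\Z$ then writing $d_j=w_ie$ one has $\tfrac{w_i}{d_j}\langle d_jm_k\rangle=\tfrac{1}{e}\langle e\langle w_im_k\rangle\rangle\leq\langle w_im_k\rangle$. But without \textbf{(A1)} the bound, and indeed the lemma, can fail: take $(w_1,w_2)=(2,1)$, $d_1=3$, genus zero, $n=4$, all $m_k=2/3\in\text{nar}$, $\beta=2/3$. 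Then $\langle w_1m_k\rangle=\langle 4/3\rangle=1/3<2/3=w_1/d_1$, and one computes $\deg\pi_*L^{\otimes 2}=4/3-4\cdot\tfrac{1}{3}=0$, so $L^{\otimes 2}$ carries a nonzero global section and $\QM^W_{0,\m}(X,\beta)\neq\QM^W_{0,\m}(\P(\vd),\beta)$. So you were right to be suspicious of the fractional corrections: the degree bound $w_i\beta<n-1$ does \emph{not} by itself have enough slack, and the argument genuinely uses \textbf{(A1)} --- a hypothesis the paper introduces only later and claims is unnecessary for this part of its results.
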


It follows that the cosection-localized virtual cycle on $\QM^W_{0,\m}(X,\beta)$ is simply the usual virtual cycle, defined via the perfect obstruction theory (\ref{pot}).  To put it more explicitly, one can define 
$[\QM^W_{0,\m}(\P(\vec d),\beta)]^{\vir}$ by way of the perfect obstruction theory $\bigoplus_jR\pi_*(\mathcal{L}^{\otimes -d_j}\otimes\omega_{\pi,\log})^{\vee}$ and then cap with the top Chern class of an obstruction bundle:
\begin{equation}\label{virclass}
[\QM^W_{0,\m}(X,\beta)]^{\vir} = e\left(\bigoplus_{i=1}^MR^1\pi_*\left( \mathcal{L}^{\otimes w_i}\right) \right) \cap [\QM^W_{0,\m}(\P(\vec d),\beta)]^{\vir}.
\end{equation}

\begin{remark}\label{rmk:onebroad}
The same proof as that given in Lemma \ref{sm} shows that $H^0(C, L^{\otimes w_i}) = 0$ if all but one marked point is narrow, so \eqref{virclass} remains valid in this case.
\end{remark}

\subsection{Extended GLSM theory}

For our methods, it is useful to extend the definition of the GLSM invariants beyond the narrow state space. This can be done by working equivariantly. More specifically, let $\T=(\C^*)^N$ act on $\C^{M+N}$ by
\[
(t_1,\dots,t_N)\cdot(x_1,\dots,x_M,p_1,\dots,p_N):=(x_1,\dots,x_M,t_1p_1,\dots,t_Np_N).
\]
Then $\T$ acts on $\QM^W_{0,n}(X,\beta)$ by post-composing the section $\sigma$ with the action of $\T$. The fixed loci of this action lie in the critical locus $\QM^W_{0,n}(\P(\vec d),\beta)$ (even when the marked points are not narrow). 

The $\T$-action on $\QM^W_{g,n}(X,\beta)$ induces a canonical lift to a $\T$-action on $\bigoplus_{i=1}^M R\pi_*(\mathcal{L}^{\otimes w_i})$. In analogy with \eqref{virclass}, we define a $\T$-equivariant extended virtual class by
\[
[\QM^W_{g,n}(X,\beta)]_\T^{\vir}:=e_{\T}^{-1}\left(\bigoplus_{i=1}^M R\pi_*(\mathcal{L}^{\otimes w_i}) \right)\cap [\QM^W_{g,n}(\P(\vec d),\beta)]^{\vir}.
\]

\begin{definition}
	Given
	\[
	\phi_1, \ldots, \phi_n \in \H:= H_{CR,\T}^*(X)
	\]
	and nonnegative integers $a_1, \ldots, a_n$, the associated {\it degree-$\beta$, genus-$g$, extended GLSM correlator} is defined by
	\[
	\langle \phi_1 \psi^{a_1} \cdots \phi_n \psi^{a_n}\rangle^{X, W,\T}_{g,n,\beta} :=
	\int_{[\QM^W_{g,n}(X,\beta)]_{\T}^{\vir}} \ev_1^*(\phi_1)\psi_1^{a_1} \cdots \ev_n^*(\phi_n)\psi_n^{a_n}.
	\]
	These invariants lie in $\C(\alpha)$, where $\alpha=(\alpha_1,\dots,\alpha_N)$ are the equivariant parameters for the $\T$-action; that is, $H^*(\mathcal{B}\T) = \C[\alpha]=\C[\alpha_1,\dots,\alpha_N]$.
\end{definition}

If all of the insertions come from the narrow state space $\H^W \subset \H$, then the genus-zero extended correlators admit a non-equivariant limit $\alpha \rightarrow 0$, which recovers the definition of narrow GLSM correlators.

\subsection{Fixed-point basis}

The state space $\H$ has a special basis given by the localization isomorphism:
\[
H_{CR,\T}^*(X)\otimes\C(\alpha)\cong\bigoplus_{k=1}^N H_{CR,\T}^*(P_k)\otimes\C(\alpha).
\] 
Here, $P_k$ is the unique $\T$-fixed point of $X$ where $p_k\neq 0$.  For $m \in \Q/\Z$, we denote by $\one_{(m)}^k$ the fundamental class on the twisted sector of $H_{CR,\T}^*(P_k)$ indexed by $m$.  The collection $\{\one_{(m)}^k\}$ is referred to as the \emph{fixed-point basis} of $\H$.

\section{Gromov--Witten Theory and Lagrangian Cones}
\label{GWtheory}

We now provide a definition of stable maps and GW invariants that is notationally consistent with the definition of LG stable quasi-maps given above, and we describe Givental's axiomatic framework for genus-zero GW theory.

\begin{definition}
	\label{stablemap}
	A {\it stable map} to $X$ consists of an $n$-pointed prestable orbifold curve $(C; q_1, \ldots, q_n)$ of genus $g$, an orbifold line bundle $L$ of degree $\beta$ on $C$, and a section
	\[\sigma \in \Gamma\left(\bigoplus_{i=1}^M L^{\otimes w_i} \oplus \bigoplus_{j=1}^N L^{\otimes - d_j}\right).\]
	We require that this data satisfies the following conditions:
\begin{enumerate}
\item \emph{Nondegeneracy} (for $\theta<0$): There are no points $q\in C$ satisfying $p_1(q)=\dots=p_N(q)=0$.
\item \emph{Representability}: For every $q \in C$ with isotropy group $G_q$, the homomorphism $G_q \rightarrow \C^*$ giving the action of the isotropy group on the bundle $\bigoplus_i L^{\otimes w_i} \oplus \bigoplus_j L^{\otimes -d_j}$ is injective.
\item \emph{Stability}: $(L^\vee)^{\otimes \epsilon}\otimes\omega_{\log}$ is ample for all $\epsilon\gg0$.
\end{enumerate}

A {\it morphism} between $(C;q_1, \ldots, q_n; L;\sigma)$ and $(C'; q_1', \ldots, q_n'; L', \sigma')$ consists of a morphism $s: C \rightarrow C'$ such that $s(q_i) = q_i'$, together with a morphism $s^*L' \rightarrow L$ that sends $s^*\sigma'$ to $\sigma$.
\end{definition}

\begin{remark}
By our conventions, the degree $\beta$ is non-positive. This convention is consistent with the fact that we are working in the negative chamber of the GIT quotient.
\end{remark}

\begin{remark}
In contrast to the definition of stable Landau--Ginzburg quasi-maps,	the stability condition in Definition \ref{stablemap} is ``$\epsilon = \infty$" stability, which is equivalent to the requirement that $(C; q_1, \ldots, q_n; L)$ have finitely many automorphisms.  In particular, rational tails are {\it not} prohibited.
	
Perhaps a more natural analogue of Definition \ref{LGmap} is that of stable quasi-maps, developed by Ciocan-Fontanine--Kim--Maulik \cite{CFKM}. However, we choose to work with stable maps because Givental's axiomatic framework is more natural in this setting. Using the wall-crossing results of Ciocan-Fontanine--Kim \cite{CFK2}, one could reprove our results using quasi-maps, instead. 
\end{remark}

As is well-known, the moduli spaces $\M_{g,n}(X,\beta)$ of stable maps are finite-type, separated, Deligne--Mumford stacks. When $\beta=0$, the moduli stacks are not necessarily proper. To remedy the nonproperness, consider the $\T$-action on $\M_{g,n}(X,\beta)$ defined by postcomposing each stable map with the $\T$-action on $X$. As in the GLSM setting, the fixed loci of the $\T$-action lie in $\M_{g,n}(\P(\vec d),\beta)$ and there is a canonical lift of the $\T$-action to $\bigoplus_{i=1}^M R\pi_*(\mathcal{L}^{\otimes w_i})$. We define a $\T$-equivariant virtual class by
\[
[\M_{g,n}(X,\beta)]_\T^{\vir}=e_{\T}^{-1}\left(\bigoplus_{i=1}^M R\pi_*(\mathcal{L}^{\otimes w_i}) \right)\cap [\M_{g,n}(\P(\vec d),\beta)]^{\vir}.
\]

The moduli spaces $\M_{g,n}(X,\beta)$ admit natural evalution maps to $\H$, and we have the following definition of $\T$-equivariant GW correlators.

\begin{definition}\label{def:gwext}
Given
\[
\phi_1, \ldots, \phi_n \in \mathcal{H}
\]
and nonnegative integers $a_1, \ldots, a_n$, the associated {\it degree-$\beta$, genus-$g$, $\T$-equivariant GW correlator} is defined by
\[
\langle \phi_1\psi^{a_1} \cdots \phi_n \psi^{a_n}\rangle^{X,\T}_{g,n,\beta}:=\int_{[\M_{g,n}(X,\beta)]_\T^{\vir}}\ev_1^*(\phi_1)\psi_1^{a_1} \cdots \ev_n^*(\phi_n)\psi_n^{a_n}.
\]
\end{definition}

\subsection{Givental's symplectic formalism}
\label{sec:givental}

The genus-zero GW invariants can be encoded geometrically as an over-ruled cone in an infinite-dimensional vector space.  Following Givental, we define the symplectic vector space
\[
\V_{\T}^X := \H[z,z^{-1}][[Q^{-\frac{1}{d}}]](\alpha)
\]
with the symplectic form
\[
\Omega_{\T}(f_1, f_2) := \text{Res}_{z=0}\bigg(f_1(-z), f_2(z)\bigg)_{\T},
\]
where 
\[
(-,-)_\T=\left\langle \one\;\phi_1\;\phi_2\right\rangle_{0,3,0}^{X,\T}
\]
 is the equivariant Poincar\'e pairing on $X$. Here, $Q$ is the Novikov variable. We view $\mathcal{V}_{\T}$ as a module over the ground ring
\[
\Lambda_{\nov}^{\T} := \C[[Q^{-\frac{1}{d}}]](\alpha).
\]

There is a decomposition
\[
\V_{\T}^X = \V_{\T}^{X+} \oplus \V_{\T}^{X-}
\]
into Lagrangian subspaces, where
\[
\V_{\T}^{X,+} = \H[z][[Q^{-\frac{1}{d}}]](\alpha),
\]
\[
\V_{\T}^{X,-} = z^{-1}\H[z^{-1}][[Q^{-\frac{1}{d}}]](\alpha).
\]
Via this polarization, $\V_{\T}^X$ can be identified with the cotangent bundle $T^*\V_{\T}^{X,+}$ as a symplectic vector space.


Fix a basis $\{\Phi_\mu\}$ of $\H$ such that $\Phi_0=\one$, and let $\{\Phi^\mu\}$ denote the dual basis under the pairing $(-,-)_\T$. This basis yields Darboux coordinates for $\V_{\T}^X$.  Namely, an arbitrary element of $\V_{\T}^X$ can be expressed as
\[
\sum_{k,\mu} q_k^\mu \Phi_\mu z^k + \sum_{k,\mu} p_{k,\mu}\Phi^\mu (-z)^{-k-1}.
\]

The {\it genus-zero generating function} of GW theory is defined by
\[
\mathcal{F}_\T^X(\bftau) = \sum_{\beta,n} \frac{Q^\beta}{n!}\langle \bftau^n(\psi)\rangle^{X,\T}_{0,n,\beta},
\]
where
\[
\bftau^n(\psi)=\bftau(\psi_1)\cdots\bftau(\psi_n)
\]
and
\[
\bftau(z) = \sum_{k,\mu} \tau_k^{\mu}\Phi_\mu z^k.
\]
The sum is over all $n$ and $\beta$ giving a nonempty moduli space $\M_{0,n}(X,\beta)$. 

We view $\mathcal{F}_\T^X$ as a function of the variables $\{q^\mu_k\}$ by way of the {\it dilaton shift}
\[
q^{\mu}_k = 
\begin{cases} \tau^{\mu}_k - 1 & \text{ if } k=1 \text{ and }\mu = 0\\ \tau^\mu_k & \text{ otherwise}.
\end{cases}
\]
It is a fundamental property of GW theory that $\F_\T^X$ satisfies the following three differential equations:
\begin{enumerate}
\item[(SE)]
\[
\frac{1}{2}\left( q, q \right)_{\T}=-\sum_{k\geq 0}\sum_{\mu}q_{k+1}^{\mu}\frac{\partial \F_\T^X}{\partial q_k^{\mu}};
\]
\item[(DE)]
\[
2\F=\sum_{k\geq 0}\sum_{\mu}q_k^{\mu}\frac{\partial \F_\T^X}{\partial q_k^{\mu}};
\]
\item[(TRR)]
\[
\frac{\partial^3\F_\T^X}{\partial q_{k+1}^{\alpha}\partial q_i^\beta \partial q_j^\gamma}=\sum_{\mu,\nu}\frac{\partial^2\F_\T^X}{\partial q_k^\alpha \partial q_0^\mu}g^{\mu\nu}\frac{\partial^3\F_\T^X}{\partial q_0^\nu\partial q_l^\beta\partial q_m^\gamma}, \hspace{.5cm} \forall \alpha,\beta,\gamma,i,j,k.
\]
\end{enumerate}
In these equations, $g^{\mu\nu}$ is the inverse of the matrix corresponding to the pairing $(-,-)_{\T}$, and $q=q_0=\sum q_0^\mu \Phi_\mu$.

Consider the graph of the differential of $\F_\T^X$:
\[
\hat\cL_{\T}^X := \{(\mathbf{q}, \mathbf{p}) \; | \; \mathbf{p} = d_{\mathbf{q}} \mathcal{F}_\T^X\} \subset \V_{\T}^X[[\tau]].
\]
More concretely, a general point of $\hat\cL_\T^X$ has the form
\begin{equation}\label{eq:formalnbd}
 -\mathbf{1}z + \bftau(z) +\sum_{n,\beta,\mu} \frac{Q^\beta}{n!}\left\langle \bftau^{n}(\psi)\; \frac{\Phi_\mu}{-z-\psi}\right\rangle^{X,\T}_{0,n+1,\beta} \Phi^\mu.
\end{equation}
We view $\hat\cL_{\T}^X$ as a formal subspace centered at $-\one z\in\V_\T^{X}$.

The equations (SE), (DE), and (TRR) together imply that the points of $\hat\cL_{\T}^X$ have some very special properties. These properties  can be described by the following geometric interpretation:

\begin{theorem}[Coates--Givental \cite{CoatesGivental}]\label{thm:givental}
$\hat\cL_{\T}^X$ is a formal germ of a Lagrangian cone $\cL_{\T}^X$ with vertex at the origin such that each tangent space $T$ to the cone is tangent to the cone exactly along $zT$.
\end{theorem}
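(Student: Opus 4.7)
The plan is to translate each of the three functional equations (SE), (DE), (TRR) satisfied by $\F_\T^X$ into a geometric property of the formal subspace $\hat\cL_\T^X$, and to note that $\hat\cL_\T^X$ being the graph of $d\F_\T^X$ under the chosen polarization gives Lagrangianity for free.

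First, I would record that, under the polarization $\V_\T^X = \V_\T^{X,+} \oplus \V_\T^{X,-}$ and the identification $\V_\T^X \cong T^*\V_\T^{X,+}$ via the Darboux coordinates $\{q^\mu_k, p_{k,\mu}\}$, the subspace $\hat\cL_\T^X$ is by construction the graph of $d\F_\T^X$. Any such graph is automatically a (formal) Lagrangian submanifold with respect to $\Omega_\T$, so the Lagrangianity part of the statement is formal.

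Second, I would extract the cone property from the dilaton equation (DE). Written as $2\F_\T^X = \sum q_k^\mu \partial \F_\T^X / \partial q_k^\mu$, this is precisely Euler's identity asserting that $\F_\T^X$ is homogeneous of degree two in the dilaton-shifted coordinates $\{q_k^\mu\}$. Hence $d\F_\T^X$ is homogeneous of degree one, so its graph is stable under rescaling in the fiber and base simultaneously; translating back through the dilaton shift, this shows that $\hat\cL_\T^X$ is a (formal germ of a) cone whose vertex is the origin of $\V_\T^X$, and that the distinguished point $-\mathbf{1}z$ lies on this cone as a ruling. The string equation (SE) plays a secondary role here, ensuring compatibility of the unit direction with the cone structure and identifying $-\mathbf{1}z$ as the canonical base point of the germ.

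Third — and this is the main obstacle — I would deduce the overruling property $T_\bftau \cL_\T^X \cap \cL_\T^X = zT_\bftau \cL_\T^X$ from the topological recursion relations (TRR). The tangent space $T_\bftau \hat\cL_\T^X$ at a point $\mathbf{f}(\bftau)$ is spanned over the Novikov ring $\Lambda_\nov^\T$ by the partial derivatives $\partial \mathbf{f}/\partial \tau_k^\mu$. The content of (TRR) is that the mixed third derivatives of $\F_\T^X$ factor through the zero-section $q_0$-derivatives in a prescribed way; unpacking this factorization shows precisely that the operator ``multiplication by $z$" on tangent vectors sends $T_\bftau \hat\cL_\T^X$ into itself, and moreover that $zT_\bftau \hat\cL_\T^X \subset \hat\cL_\T^X$. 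The reverse inclusion $T_\bftau \hat\cL_\T^X \cap \hat\cL_\T^X \subset zT_\bftau \hat\cL_\T^X$ then follows by a dimension count in the $z$-adic filtration: a vector lying in both must vanish to order at least one in $z$ by the cone property combined with the fact that the polarization identifies $T_\bftau \hat\cL_\T^X$ with $\V_\T^{X,+}$ to leading order. The hard part is making the first inclusion precise, since one must carefully match the combinatorics of TRR with the $z$-expansion of tangent vectors; this is where I expect all of the real work to happen, and the argument ultimately follows the one carried out by Coates--Givental.
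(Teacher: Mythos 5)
The paper cites this result from Coates--Givental without presenting a proof, so there is no in-paper argument to compare against; what follows is an assessment of your sketch on its own terms. Your overall structure is correct and standard: Lagrangianity is automatic from the graph construction, the dilaton equation gives homogeneity of degree two and hence the cone property, and TRR enters in establishing the overruled structure.

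However, you mischaracterize the role of the string equation by calling it ``secondary.'' In the Coates--Givental argument, SE carries one of the two essential inclusions: translated into Givental's formalism, it says precisely that each point $f\in\cL$ satisfies $f\in zT_f\cL$, i.e.\ every point of the cone actually lies on one of the proposed rulings. TRR supplies the complementary facts that $zT_f\cL\subset\cL$ and that the tangent space is locally constant along $zT_f\cL$ (so $T_g\cL=T_f\cL$ for $g$ in the ruling through $f$). Without SE you have a family of subspaces $zT_\tau\cL$ sitting inside the cone, but no guarantee they sweep it out; without TRR you cannot even conclude that $zT_f\cL$ is contained in the cone. Your ``dimension count in the $z$-adic filtration'' for the reverse inclusion $T_f\cL\cap\cL\subset zT_f\cL$ is too vague to stand on its own: the actual argument uses that $\cL$ is the union of the rulings $zT_\tau\cL$, that $T$ is constant along each ruling, and that the rulings are maximal isotropic subspaces of $\V_\T^{X,+}$ with the $z$-module structure, so that any $g\in\cL$ close to $f$ with $g\in T_f\cL$ necessarily lies in the particular ruling $zT_f\cL$. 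I would encourage you to rework the role of SE and to replace the heuristic dimension count with the ruling-sweeping argument if you want this to be a genuine proof rather than an outline.
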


Theorem \ref{thm:givental} implies that the points of the Lagrangian cone are completely determined by any $\dim_{\C(\alpha)}(\H)$-dimensional transverse slice. In particular, one such slice is given by the {\it $J$-function}:
\begin{equation}\label{eq:gwj}
J_{\T}^X(\tau, -z) = -\mathbf{1}z + \tau +\sum_{n,\beta,\mu} \frac{Q^\beta}{n!}\left\langle \tau^{n}\;\frac{\Phi_\mu}{-z-\psi}\right\rangle^{X,\T}_{0,n+1,\beta} \Phi^\mu,
\end{equation}
where
\[
\tau=\sum_{\mu}\tau^{\mu}\Phi_\mu.
\]
Theorem \ref{thm:givental}, along with the string equation, implies that the derivatives of $J_{\T}^X(\tau,-z)$ span the Lagrangian cone:
\begin{equation}\label{eq:ruling}
\cL_{\T}^X=\left\{\left. z\sum_{\mu}c_{\mu}(z)\frac{\partial }{\partial \tau^{\mu}}J_{\T}^X(\tau,-z) \right|c_{\mu}(z)\in\Lambda_{\nov}^{\T}[z]\right\}.
\end{equation}
In practice, we also allow the coefficients $c_\mu(z)$ to be power series in $z$, as long as they converge in some specified topology.

\subsection{Formal subspaces in the GLSM}
\label{formal}

We can analogously encode the genus-zero GLSM invariants as a formal subspace in Givental's symplectic vector space.  First, define the \emph{GLSM I-function}:
\begin{equation}\label{Ifunction}
I_\T^{X,W}(Q,z):=z\sum_{a\in\frac{1}{d}\Z \atop a>0} Q^{-a}\frac{\prod_{i=1}^M\prod_{0\leq b< a w_i \atop \langle b \rangle = \langle a w_i \rangle}(-bz-e_T(\O_X(w_i)))}{\prod_{j=1}^N\prod_{0< b< a d_j \atop \langle b \rangle = \langle a d_j \rangle}(bz+e_T(\O_X(-d_j)))}\one_{( a )}.
\end{equation}
Here, $\O(w_i)$ and $\O(-d_j)$ are the (canonically $\T$-linearized) orbifold line bundles on \[X=[\C^{M+N}\GIT_{\theta<0} G]\] corresponding to the $G$-equivariant line bundles given by the $i$th and $(M+j)$th factors of $\C^{M+N}$, respectively. The class $\one_{(a)}=\one_{(\langle a\rangle)}$ is the fundamental class of the twisted sector $X_{(\langle a \rangle)} \subset \I X$.

In analogy with \eqref{eq:formalnbd}, we define a formal subspace $\hat\cL_\T^{X,W}$ as the collection of points of the form
\begin{equation}
\label{eq:formalGLSM}
I_\T^{X,W}(Q,-z)+\bt(z) +\sum_{n,\beta,\mu} \frac{Q^\beta}{n!}\left\langle \bt^{n}(\psi)\; \frac{\Phi_\mu}{-z-\psi}\right\rangle^{X,W,\T}_{0,n+1,\beta} \Phi^\mu,
\end{equation}
viewed as a formal subspace centered at $I_\T^{X,W}(Q,-z)$ in $\V_\T^{X}$.
Notice that the power of $Q$ can be positive in the sum. Therefore, we view $\hat\cL_\T^{X,W}$ as a formal subspace over the extended ground ring:
\[
\tilde\Lambda_{\nov}^{\T}:=\Lambda_{\nov}^{\T}[Q^{\frac{1}{d}}].
\]

It is not obvious, a priori, that the formal subspace $\hat\cL_{\T}^{X,W}$ has any geometric properties analogous to those described in Theorem \ref{thm:givental}. One consequence of Theorem \ref{maintheorem}, however, is that $\hat\cL_{\T}^{X,W}$ is, in fact, a formal germ of the over-ruled Lagrangian cone $\cL_{\T}^X$.

\section{Localization in the GLSM}
\label{localization}

In this section, we describe the virtual localization formula for the extended genus-zero GLSM invariants. References for virtual localization are Graber--Pandharipande \cite{GP} and Liu \cite{Liu}.

\subsection{Localization graphs}

Virtual localization expresses the genus-zero GLSM correlators
\begin{equation}\label{eq:correlator}
\left\langle \phi_1\psi^{a_1}\cdots\phi_n\psi^{a_n}\right\rangle^{X,W,\T}_{0,n,\beta}
\end{equation}
as a sum of contributions from the $\T$-fixed loci on $\QM_{0,n}^W(X,\beta)$. The fixed loci of $\QM_{0,n}^W(X,\beta)$ are indexed by decorated trees $\Gamma$. For a tree $\Gamma$, let $V(\Gamma), E(\Gamma),$ and $F(\Gamma)$ denote the sets of vertices, edges, and flags of $\Gamma$, respectively. The localization trees are decorated as follows:
\begin{itemize}
	\item Each vertex $v$ is decorated by an index $k_v\in\{1,\dots,N\}$ and a degree $\beta_v\in\frac{1}{d}\Z$.
	\item Each edge $e$ is decorated by a degree $\beta_e\in\frac{1}{d}\Z$.
	\item Each flag $(v,e)$ is decorated by a multiplicity $m_{(v,e)}\in\frac{1}{d}\Z/\Z$.
\end{itemize}
In addition, $\Gamma$ is equipped with a map
\[s: \{1, \ldots, n\} \rightarrow V(\Gamma)\]
assigning marked points to the vertices.  Let $E_v$ be the set of edges adjacent to $v$, and set
\[
\val(v):=|E_v|+|s^{-1}(v)|.
\]

A point in the fixed locus $F^W_\Gamma$ indexed by the decorated graph $\Gamma$ can be described as follows:
\begin{itemize}
\item Each vertex $v\in V(\Gamma)$ corresponds to a maximal connected component $C_v$ over which $p_j=0$ for $j\neq k_v$, and $\deg(L|_{C_v}) = \beta_v$.
\item Each edge $e\in E(\Gamma)$ with adjacent vertices $v$ and $v'$ corresponds to an orbifold line $C_e$ with two marked points $q_v,q_{v'}$, over which $p_j=0$ for $j\neq k_v,k_{v'}$.  The section $p_{k_v}$ vanishes only at $q_{v'}$, while the section $p_{k_{v'}}$ vanishes only at $q_v$, and $\deg(L|_{C_e})= \beta_e$. 
\item The set $s^{-1}(v)\subset\{1,\dots,n\}$ indexes the marked points supported on $C_v$.
\item If either $\val(v)>2$, or $\val(v)=2$ and $\beta_v<0$, then the flag $(v,e)\in F(\Gamma)$ corresponds to a node attaching $C_v$ to $C_e$, and $m_{(v,e)}$ gives the multiplicity of $L$ on the vertex branch of the node. If $\val(v)=2$ and $\beta_v=0$, then $C_v$ is the smooth point $q_v\in C_e$ and $-m_{(v,e)}$ is the multiplicity of $L$ at $q_v$.
\end{itemize}
We denote by $m_i \in \Q/\Z$ the twisted sector of $H^*_{CR}(X)$ from which the insertion $\phi_i$ is drawn.

The non-emptiness of $F_{\Gamma}^W$ imposes a number of constraints on the decorations.  For example, for each edge $e$ with adjacent vertices $v$ and $v'$, one must have
\[\beta_e + m_{(v,e)} + m_{(v',e)} \in \Z\]
and $\beta_e < 0$.

The contribution of a graph $\Gamma$ to the localization expression for \eqref{eq:correlator} can be subdivided into vertex, edge, and flag contributions, which we outline below.

\subsection{Vertex contributions}

We call a vertex \emph{stable} if either $\val(v)>2$ or $\val(v)=2$ and $\beta_v<0$; otherwise, we say the vertex is \emph{unstable}.

Let $v$ be a stable vertex, and let $F_v^W$ denote the $\T$-fixed locus in $\QM_{0,\val(v)}^W(X,\beta_v)$ for which $p_j=0$ for $j\neq k_v$. Let $N_F^W$ denote the virtual normal bundle of $F_v^W$ in $\QM_{0,\val(v)}^W(X,\beta_v)$. We define the vertex contribution of the stable vertex $v$ to be
\[
\Contr^W_\Gamma(v):=\int_{[F_v^W]_T^{\vir}}\frac{\prod_{i\in s^{-1}(v)} \ev_i^*(\one_{(m_i)}^{k_v})\psi^{a_i}}{e_T(N_F^W)}\prod_{e\in E_v}\frac{d_{k_v}\ev_e^*(\one_{(m_{(v,e)})}^{k_v})}{\frac{\alpha_{k_{v'}}-\alpha_{k_{v}}}{\beta_e}-\psi_e}.
\]

The stability condition in the definition of stable LG quasi-maps ensures that the only unstable vertices are those for which $s^{-1}(v)=\{i_v\}$, $E_v=\{e\}$, and $\beta_v=0$. Let $v_e$ be the other vertex adjacent to $e$. We define the vertex contribution of the unstable vertex $v$ to be
\[
\Contr^W_\Gamma(v):=\left(\frac{\alpha_{k_{v}}-\alpha_{k_{v_e}}}{\beta_e}\right)^{a_{i_v}}.
\]
Note that the expression in the parentheses is nothing more than $\psi_{i_v}$ restricted to the fixed locus $F^W_\Gamma$.

\subsection{Edge contributions}

Let $e$ be an edge with adjacent vertices $v$ and $v'$, and let $F_e$ denote the $\T$-fixed locus in $\QM_{0,(-m_{(v,e)},-m_{(v',e)})}^W(X,\beta_e)$ for which $p_j=0$ for $j\neq k_v,k_{v'}$, the section $p_{k_v}$ vanishes only at $q_{v'}$, and the section $p_{k_{v'}}$ vanishes only at $q_{v}$. Let $N_e$ denote the virtual normal bundle of $F_e$. We define
\begin{align}\label{edge}
\nonumber\Contr^W_\Gamma(e)&:=\int_{[F_e]^{\vir}}\frac{1}{e_T(N_e)}\\
&=\frac{1}{d_{k_v}d_{k_{v'}}\beta_e} \cdot \frac{e_\T\left(\bigoplus_{i=1}^M R^1\pi_*\cL^{\otimes w_i}\right)}{e_\T\left(\left(\bigoplus_{j=1}^N R^0\pi_*\cL^{\otimes -d_j}\right)^{mov}\right)},
\end{align}
where the superscript $mov$ denotes the moving part with respect to the $\T$-action. To arrive at \eqref{edge}, we are using the fact that $\omega_{\log}\cong\O$ on $C_e$.

The expression \eqref{edge} can be made explicit, following \cite{Liu}:
\[
\Contr^W_\Gamma(e)=\frac{1}{d_{k_v}d_{k_{v'}}\beta_e}\frac{\prod_{i}\prod_{0<b<-\beta_e w_i \atop \langle b \rangle =\langle m_{k_{v}}w_i \rangle}\left(\frac{b}{\beta_e}(\alpha_{k_{v'}}-\alpha_{k_{v}})-w_i\alpha_{k_{v}}\right)}{\prod_{j}\prod_{0\leq b\leq-\beta_e d_j \atop \langle b \rangle =\langle m_{k_{v}}d_j \rangle}'\left(\frac{b}{\beta_e}(\alpha_{k_{v}}-\alpha_{k_{v'}})+d_j(\alpha_{k_{v}}-\alpha_j)\right)},
\]
where $\prod'$ in the denominator denotes the product over all nonzero factors.

\subsection{Flag contributions}\label{flag}

Let $(v,e)$ be a flag at a stable vertex $v$. Let $N_{k_v}^{m_{(v,e)}}$ denote the normal bundle of the unique $\T$-fixed point in $X_{(m_{(v,e)})}$ where $p_{k_v}\neq 0$. We define the flag contribution of the stable flag $(v,e)$ to be
\[
\Contr^W_\Gamma(v,e):=e_T\left(N_{k_v}^{m_{(v,e)}}\right).
\]
If $v$ is an unstable vertex, then we define $\Contr^W_\Gamma(v,e)=1$.

\subsection{Total graph contributions}

Combining all of these contributions, the genus-zero GLSM correlators are given by:
\[
\left\langle \phi_1\psi^{a_1}\cdots\phi_n\psi^{a_n}\right\rangle^{X,W,\T}_{0,n,\beta}=\sum_{\Gamma}\Contr^W_\Gamma,
\]
where $\Contr^W_\Gamma$ equals
\[
\frac{1}{|\Aut(\Gamma)|}\prod_{v\in V(\Gamma)}\Contr^W_\Gamma(v)\prod_{e\in E(\Gamma)}\Contr^W_\Gamma(e)\prod_{(v,e)\in F(\Gamma)}\Contr^W_\Gamma(v,e).
\]

\subsection{Comparison with GW theory}

The virtual localization formula for the GW theory of $X$ is developed carefully in \cite{Liu}. For the reader's convenience, we briefly compare it to the above localization formula in the GLSM.

The first difference between the localization formulas is that the GW localization graphs are a superset of the GLSM localization graphs. Indeed, there are localization graphs in GW theory with vertices of valence one, which correspond to rational tails of the source curve. The contributions from these rational tails, however, do not play a role in our proofs, so we do not discuss them further.  Whenever an edge or flag has no adjacent vertex of valence one, its contribution to the GW localization formula is exactly the same as in the GLSM.

The second major difference occurs at stable vertices, and it does play a role in what follows.  Namely, in GW theory, $\Contr^W_\Gamma(v)$ is replaced by
\[
\Contr_\Gamma(v):=\int_{[F_v]_T^{\vir}}\frac{\prod_{i\in s^{-1}(v)} \ev_i^*(\one_{(m_i)}^{k_v})\psi^{a_i}}{e_T(N_F)}\prod_{e\in E_v}\frac{d_{k_v}\ev_e^*(\one_{(m_{(v,e)})}^{k_v})}{\frac{\alpha_{k_{v'}}-\alpha_{k_v}}{\beta_e}-\psi_e};
\]
that is, the integral over $F_v^W \subset \QM_{0,\val(v)}^W(X,\beta_v)$ becomes an integral over $F_v \subset \M_{0,\val(v)}(X,\beta_v)$, which is similarly defined as the $\T$-fixed locus for which $p_j=0$ for $j\neq k_v$.

\section{Vertex Correspondence}
\label{hybrid}

In this section, we make a comparison of the GW and GLSM invariants appearing in the stable vertex terms of the localization graphs of the previous section. 

\subsection{Twisted GW theory}
\label{twistedGW}

Recall that $P_k$ denotes the unique $\T$-fixed point of $X$ for which $p_k \neq 0$.  After unraveling the definitions, we see that the stable vertex contributions $\Contr_\Gamma(v)$ for GW theory encode invariants of the form
\begin{equation}\label{eq:twistedgw}
\int_{[\M_{0,n}(P_k,\beta)]}\ev_1^*(\phi_1) \psi^{a_1} \cdots \ev_n^*(\phi_n) \psi^{a_n}\cdot e_\T^{-1}\left(R\pi_*\cT_k\right),
\end{equation}
where $\phi_1, \ldots, \phi_n \in H_{CR,\T}^*(P_k)$ and 
\[
\cT_k:=\bigoplus_{i=1}^M \cL^{\otimes w_i}\oplus\bigoplus_{j\neq k} \cL^{\otimes -d_j}
\]
with $\T$-weights $w_i\alpha_k$ for each $i\in\{1,\dots,M\}$ and $d_j\alpha_j-d_j\alpha_k$ for each $j\in\{1,\dots,N\}\setminus\{k\}$.

The expression \eqref{eq:twistedgw} is an example of a genus-zero {\it twisted correlator} on $\M_{0,n}(P_k,\beta)$. Coates--Givental \cite{CoatesGivental} have developed a general framework for working with such twisted theories, which we now recall. 

For any choice of parameters
\[s=\left\{s_l^i,\ts_l^j{} \; \left| \; \substack{i \in \{1,\ldots, M\},\\j\in \{1, \ldots, N\} \setminus \{k\},\\ l\geq 0}\right.\right\},\]
a characteristic class on vector bundles of the form \[T=\bigoplus_{i=1}^M U_i\oplus\bigoplus_{j\neq k}V_{k}\]
can be defined by 
\begin{equation}\label{eq:charclass}
c(T):=\prod_{i=1}^M\exp\left(\sum_{l \geq 0} s_l^i\ch_l(U_i)\right)\cdot \prod_{j\neq k}\exp\left(\sum_{l \geq 0} \ts_l^{j}\ch_l(V_j)\right).
\end{equation}
This class is multiplicative, and can thus be extended to $K$-theory.

Given such parameters, we define \emph{c-twisted correlators} in all genera by
\[
\langle \phi_1 \psi^{a_1} \cdots \phi_n \psi^{a_n}\rangle^{P_k,c}_{g,n,\beta}:=\int_{[\M_{g,n}(P_k,\beta)]}\ev_1^*(\phi_1) \psi^{a_1} \cdots \ev_n^*(\phi_n) \psi^{a_n}c\left(R\pi_*\cT_k\right).
\]
When $s^i_l,\ts_l^{j}=0$ for all $i,j,l$, we obtain the \emph{untwisted correlators}, which we denote by $\langle \phi_1 \psi^{a_1} \cdots \phi_n \psi^{a_n}\rangle^{P_k,\un}_{g,n,\beta}$.

\begin{remark}\label{rmk:gwun}
The specific choice
\begin{align*}
s_l^i = \begin{cases} -\displaystyle \ln\left(w_i\alpha_k\right) & \text{ if } l = 0\\ \; & \; \\ \displaystyle\frac{(l-1)!}{(-w_i\alpha_k)^l} & \text{ if } l > 0\end{cases}
&\hspace{1cm}\ts_l^{j} = \begin{cases} -\displaystyle \ln(d_j\alpha_j-d_j\alpha_k) & \text{ if } l = 0\\ \; & \; \\ \displaystyle\frac{(l-1)!}{(d_j\alpha_k-d_j\alpha_j)^l} & \text{ if } l > 0\end{cases}
\end{align*}
yields a characteristic class $c_{k}(-)$ such that
\[
c_{k}\left(R\pi_*\cT_k\right)=e_\T^{-1}\left(R\pi_*\cT_k\right).
\]
\end{remark}

For any choice of multiplicative characteristic class $c$, the genus-zero $c$-twisted GW correlators define an axiomatic Gromov--Witten theory on the symplectic vector space
\[
\V^{P_k}_c= H_{CR}^*(P_k)((Q^{-\frac{1}{d_k}},z))[[s]]
\]
with symplectic form induced by the twisted pairing
\begin{equation}\label{eq:gwtw}
(\phi_1, \phi_2)^{P_k}_c = \langle \one^k \; \phi_1 \; \phi_2 \rangle^{P_k, c}_{0,3,0}.
\end{equation}
We denote the corresponding Lagrangian cone by $\cL^{P_k}_c\subset\V^{P_k}_c$.  Notice that in the twisted theory, we allow Laurent series in $z$ rather than only Laurent polynomials; this will be important later.

The following theorem relates $c$-twisted GW correlators to their untwisted versions:

\begin{theorem}[Tseng \cite{Tseng}]\label{thm:tseng}
Define the symplectic transformation
\[\Delta: \mathcal{V}^{P_k}_{\un} \rightarrow \mathcal{V}^{P_k}_{c}\]
by
\[\Delta = \hspace{-0.2cm}\bigoplus_{m \in \frac{1}{d_k}\Z/\Z}\hspace{-0.2cm}\exp\bigg(\sum_{l \geq 0} \bigg[\sum_{i=1}^M s_l^i\frac{B_{l+1}\left(\left\langle w_im\right\rangle\right)}{(l+1)!}+\sum_{j\neq k}\ts_l^{j}\frac{B_{l+1}\left(\left\langle -d_jm\right\rangle\right)}{(l+1)!}\bigg]z^{l}\bigg).\]
Then
\[\Delta(\cL^{P_k}_{\un}) = \cL^{P_k}_c.\]
\end{theorem}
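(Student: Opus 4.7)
My plan is to prove Theorem \ref{thm:tseng} following the strategy originally developed by Coates--Givental in the smooth case and adapted by Tseng to the orbifold setting. The key observation is that the parameters $s = \{s_l^i, \ts_l^j\}$ form a formal deformation away from the untwisted theory, so it suffices to verify the statement infinitesimally: I need to show that for each $s_l^i$ (resp.\ $\ts_l^j$), the partial derivative $\partial_{s_l^i} \cL_c^{P_k}$ (resp.\ $\partial_{\ts_l^j} \cL_c^{P_k}$) equals the action on $\cL_c^{P_k}$ of the infinitesimal symplectic transformation obtained by differentiating $\Delta$. Because $\Delta$ is block-diagonal in the twisted-sector decomposition and acts by multiplication by an exponential of an element in $z\H_{CR}^*(P_k)[[z]]$ on each sector, its infinitesimal generators lie in the positive loop algebra, so the verification amounts to showing that the twisted correlators differ from the untwisted ones by the insertion predicted by the infinitesimal generator, modulo terms which act trivially on points of the cone.

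The main computational step is to apply the orbifold Grothendieck--Riemann--Roch formula of To\"en to compute
\[
\ch\bigl(R\pi_*\cT_k\bigr) = \pi_*\Bigl(\ch(\cT_k)\, \Td^\vee(\omega_{\pi})\Bigr)
\]
on the inertia stack of $\M_{0,n}(P_k,\beta)$. Because $P_k$ is a point with automorphism group $\Z_{d_k}$ and $\cT_k$ decomposes into characters of $\Z_{d_k}$, the orbifold GRR produces, for each line bundle summand $\cL^{\otimes w_i}$ or $\cL^{\otimes -d_j}$ and each integer $l\geq 0$, a contribution governed by the Bernoulli polynomial $B_{l+1}$ evaluated at the fractional ages $\langle w_i m\rangle$ or $\langle -d_j m\rangle$ arising at the marked points and nodes. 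Summing these contributions gives an explicit expression for $\ln c(R\pi_*\cT_k)$ as a sum of (i) kappa-class contributions pulled back from $\M_{0,n}$, (ii) psi-class contributions localized at each marked point with coefficient depending on $B_{l+1}(\langle\cdot\rangle)/(l+1)!$, and (iii) contributions at each node involving the analogous Bernoulli values for both branches.

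Translating each of these three types of contributions into the symplectic formalism proceeds as in Coates--Givental. The kappa contributions translate into a scalar (``dilaton-type'') factor which acts trivially on the cone up to uniform rescaling; the marked-point contributions are exactly what produces the diagonal symplectic transformation $\Delta$ of the statement, once one recognizes that the insertion of $\psi^l$ with coefficient $B_{l+1}(\langle w_i m\rangle)/(l+1)!$ at a marked point of monodromy $m$ is the descendant realization of multiplication by the corresponding factor in $\Delta$; and the nodal contributions factor as a quadratic form which, by the string/dilaton equations, preserves the overruled Lagrangian cone. Combining these pieces via the infinitesimal recursion then yields $\Delta(\cL_{\un}^{P_k}) = \cL_c^{P_k}$.

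The main obstacle is the nodal term: one must show that the boundary-restricted $B_{l+1}$ contributions from the two branches of a node combine into a symmetric bilinear form that coincides precisely with the quadratic Hamiltonian whose flow preserves the cone. This requires the Bernoulli reflection identity $B_{l+1}(1-x) = (-1)^{l+1}B_{l+1}(x)$ together with the compatibility between the twisted sector pairing \eqref{eq:gwtw} and the orbifold age decomposition at a node (the twisted sectors at the two branches are inverse to one another). Once this cancellation is established, the remaining marked-point contributions are independent in each twisted sector, which accounts for the block-diagonal form of $\Delta$ and completes the proof.
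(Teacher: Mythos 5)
The paper does not prove this statement; Theorem \ref{thm:tseng} is attributed to Tseng \cite{Tseng} and cited without proof, so there is no in-paper argument against which to compare your proposal. Your sketch does reconstruct the actual Coates--Givental/Tseng strategy: differentiate in the twisting parameters, apply To\"en's orbifold GRR to compute $\ch(R\pi_*\cT_k)$ as a pushforward from the inertia stack of the universal curve, sort the resulting classes into interior ($\kappa$), marked-point ($\psi$), and nodal pieces, and match them with the infinitesimal loop-group action defining $\Delta$, handling the nodal piece via the Bernoulli reflection identity $B_{l+1}(1-x)=(-1)^{l+1}B_{l+1}(x)$ and the compatibility of the twisted-sector pairing at a node.

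One claim to revisit: you dispose of the $\kappa$-class contributions as a scalar ``dilaton-type'' factor acting trivially on the cone up to rescaling. That is not how they function in the Coates--Givental/Tseng argument. The genuine central scalar in the quantization formula comes from genus-one data (integrals over $\M_{1,1}$), not from the genus-zero $\kappa$-classes. In genus zero the $\kappa_l$ are converted into descendant insertions via $\kappa_l=\pi_*(\psi_{n+1}^{l+1})$ together with the string/dilaton equations, and they contribute materially to the Bernoulli-polynomial coefficients in the exponent of $\Delta$. Discarding them as a trivial scalar would leave $\Delta$ with the wrong coefficients; you need to merge the interior and marked-point contributions before reading off $\Delta$. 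With that correction, the remainder of your plan --- infinitesimal recursion, nodal cancellation via the Bernoulli reflection, and block-diagonality from independence of twisted sectors --- is the correct scaffolding of Tseng's proof.
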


More explicitly, $\Delta$ acts diagonally with respect to the decomposition of $H^*_{CR}(P_k)$ into twisted sectors, and the $m$th component gives its action on the sector indexed by $m$.

\subsection{Twisted theory on weighted spin curves}

A similar formalism applies to the stable vertex terms in the GLSM.  First, however, some further comments on the underyling moduli space are in order.

At a stable vertex $v$ in a GLSM localization graph $\Gamma$ with $k_v = k$, the moduli space $F_v$ parameterizes a prestable marked orbifold curve $C_v$ along with an orbifold line bundle $L$ and an isomorphism
\[
L^{\otimes d_k}\cong\omega_{\log}\otimes\O(-B),
\]
where $B$ is the base locus of $\sigma|_{C_v}$. The stability condition is equivalent to insisting that $\omega_{\log}\otimes\O(\epsilon B)$ is ample for all $\epsilon>0$. These moduli spaces where introduced by the second author and Ruan in \cite{RR} under the name \emph{$\epsilon=0$ weighted spin curves}.

More generally, let $\M_{g,n}^d(\beta)^\epsilon$ denote the moduli space parametrizing genus-$g$ prestable marked orbifold curves $(C;q_1,\dots,q_n)$ along with an effective divisor $B$, a degree-$\beta$ line bundle $L$, and an isomorphism
\[
L^{\otimes d}\cong \omega_{\log}\otimes \O(-B),
\]
satisfying:
\begin{enumerate}
	\item The support of $B$ is disjoint from the marks and nodes of $C$.
	\item For every $q \in C$ with isotropy group $G_q$, the homomorphism $G_q \rightarrow \C^*$ giving the action of the isotropy group on the bundle $\bigoplus_i L^{\otimes w_i} \oplus L^{\otimes -d_k}$ is injective.
	\item $\omega_{\log} \otimes \O(\delta B)$ is ample for all $\delta > \epsilon$.
\end{enumerate}
Let $\M_{g,\vec m}^{d}(\beta)^{\epsilon}$ denote the component of the moduli space on which $L$ has multiplicity $m_i$ at the $i$th marked point.

Then $\Contr_\Gamma(v)$ encodes invariants of the form
\begin{equation}\label{eq:twistedspin}
\int_{[\M_{0,\vec m}^{d_k}(\beta)^{\epsilon=0}]}\psi_1^{a_1} \cdots \psi_n^{a_n}\; e_\T^{-1}\left(R\pi_*\cT_k^{W}\right),
\end{equation}
where 
\[
\cT_k^{W}:=\bigoplus_{i=1}^M \cL^{\otimes w_i}\oplus\bigoplus_{j\neq k} \cL^{\otimes -d_j}\otimes\omega_{\log}
\]
with $\T$-weights $w_i\alpha_k$ for each $i\in\{1,\dots,M\}$ and $d_j\alpha_j-d_j\alpha_k$ for each $j\in\{1,\dots,N\}\setminus\{k\}$.

In analogy with twisted GW theory, we make the following definition.

\begin{definition} For a characteristic class $c$ defined as in \eqref{eq:charclass} and a choice of $\epsilon\geq 0$, we define the {\it c-twisted spin correlators} by
\[
\langle \one_{(m_1)}^{k} \psi^{a_1} \cdots \one_{(m_n)}^k \psi^{a_n}\rangle^{P_k,W,\epsilon,c}_{g,n,\beta}:=\hspace{-.1cm}\int_{[\M_{0,\vec m}^{d_k}(\beta)^{\epsilon}]} \hspace{-.2cm}\psi_1^{a_1} \cdots \psi_n^{a_n}\;c(R\pi_*\mathcal{T}_k^{W}).
\]
\end{definition}

\begin{remark}\label{rmk:glsmun}
For $\epsilon=0$, the specific choice made in Remark \ref{rmk:gwun} yields the vertex contributions from the GLSM localization formula. 
\end{remark}

For $\epsilon\gg 0$ (denoted $\epsilon=\infty$) and any choice of characteristic class $c$, the genus-zero twisted spin correlators define an axiomatic Gromov--Witten theory on the symplectic vector space $\V^{P_k}_c$, which is a twisted version of FJRW theory. More specifically, define the genus-zero potential by
\[
\F_{k,c}^{W}(\bt)=\sum_{n,\beta}\frac{Q^{\beta}}{n!}\left\langle\bt(\psi)^n\right\rangle_{0,n,\beta}^{P_k,W,\infty,c},
\]
where 
\[
\bt(\psi)=\sum_{m,l} t_l^m \one_{(m)}^k\psi^l.
\]
It is a fundamental property of FJRW theory that, after the dilaton shift
\[
q^{m}_l = 
\begin{cases} t^{m}_l - Q^{\frac{1}{d_k}} & \text{ if } l=1 \text{ and }m = 1/d_k\\ t^m_l & \text{ otherwise},
\end{cases}
\]
the potential $\F_{k,c}^{W}$ satisfies the equations (SE), (DE), and (TRR) described in Section \ref{sec:givental}.

\begin{remark}
In the theory of spin curves, there exists a forgetful map only on the component where the last marked point has multiplicity $\frac{1}{d_k}$; thus, $\one^k_{(1/d_k)}$ plays the role of the unit in this theory. Moreover, the forgetful map changes the degree of $L$. This explains why the dilaton shift differs from that in GW theory. In addition, it is straightforward to check that the twisted pairing \eqref{eq:gwtw} is recovered by:
\begin{equation*}
(\phi_1, \phi_2)^{P_k}_{c} = \langle  \one_{(1/d_k)}^k\;\phi_1 \; \phi_2  \rangle^{P_k,W,\infty, c}_{0,3,1/d_k}.
\end{equation*}
\end{remark}

We denote the Lagrangian cone associated to the $\epsilon=\infty$ $c$-twisted spin theory by $\cL^{P_k,W}_c\subset\V^{P_k}_c$. 
Two results about this cone will be important in what follows; briefly, these are:
\begin{enumerate}
\item Wall-crossing:  The $c$-twisted spin correlators for any $\epsilon$ can be recovered from the $\epsilon=\infty$ $c$-twisted spin correlators, by relating them to $\cL^{P_k,W}_c$.
\item Symplectomorphisms: The $\epsilon=\infty$ $c$-twisted spin correlators for any $c$ can be recovered from the $\epsilon=\infty$ untwisted correlators, by giving a symplectomorphism taking $\cL^{P_k,W}_{un}$ to $\cL^{P_k,W}_c$.
\end{enumerate}
Below, we make these two facts explicit.

\subsection{Wall-crossing}

Fix a characteristic class $c$ as above.  Although the twisted spin correlators can, a priori, only be encoded in an over-ruled Lagrangian cone for $\epsilon=\infty$, we can still define a formal subspace for any $\epsilon$, analogously to Section \ref{formal}.  To do so, we must define an $\epsilon$-dependent $I$-function and this can be done using graph spaces, following \cite{Bertram}, \cite{CFK2}, and \cite{RR}.

More specifically, let $G\M^{d_k}_{0, n+1}(\beta)^{\epsilon}$ be the graph space of weighted spin curves with the additional data of a parameterization of one component of the source curve $C$. Stability, here, only requires $\omega_{\log} \otimes \O(\delta B)$ to be ample (for all $\delta > \epsilon$) on the non-parameterized components. Let $G\M^{d_k}_{0, \vec m + m}(\beta)^{\epsilon}$ be the component of the graph space where the multiplicity of $L$ on the $i$th marked point is $m_i$ for $i\leq n$ and the multiplicity of $L$ on the last marked point is $m$. We define twisted correlators on this moduli space by integration:
\begin{equation}\label{graphcorrelator}
\int_{[G\M^{d_k}_{0, \vec m+m}(\beta)^{\epsilon}]} \psi_1^{a_1} \cdots \psi_n^{a_n}\;c(R\pi_*\mathcal{T}_k^{W}).
\end{equation}

The graph space admits a $\C^*$ action induced by scaling the coarse coordinates of the parameterized component:
\[
t\cdot[y_0, y_1]:=[ty_0,y_1].
\]
Let $z$ denote the equivariant parameter of this action.

There is a special $\C^*$-fixed locus in $G\M^{d_k}_{0, \vec m+m}(\beta)^{\epsilon}$ where the last marked point is $[0:1]$ and the rest of the marked points and basepoints lie over $[1:0]$. Denote this fixed locus by $F_{\vec m+m,\beta}^\epsilon$. Let $\mathrm{Res}\left(F_{\vec m+m,\beta}^\epsilon\right)$ denote the equivariant residue of $F_{\vec m+m,\beta}^\epsilon$ with respect to the integral \eqref{graphcorrelator}. Whenever $\M^{d_k}_{0, \vec m+m}(\beta)^{\epsilon}$ is nonempty (that is, when $n>1$, when $n=1$ and $\beta<0$, or when $n=0$ and $\beta\leq -\frac{1+\epsilon}{d_k\epsilon}$), a standard computation shows that 
\[
\mathrm{Res}\left(F_{\vec m+m,\beta}^\epsilon\right)=\frac{-1}{z^2}\left\langle \one_{(m_1)}^{k} \psi^{a_1} \cdots \one_{(m_n)}^k \psi^{a_n}\;\frac{\one_{(m)}^k}{z-\psi}\right\rangle^{P_k,W,\epsilon,c}_{0,n+1,\beta}.
\]
When $n=1$ and $\beta=0$, we have
\[
\mathrm{Res}\left(F_{m_1+m,\beta=0}^\epsilon\right)=\begin{cases}
-\frac{c\left(N_{k}^{m}\right)}{d_kz^2}(-z)^{a_1} & m=-m_1\\
0 & \text{otherwise.}
\end{cases}
\]
When $n=0$ and $\beta> -\frac{1+\epsilon}{d_k\epsilon}$, we have
\[
\mathrm{Res}\left(F_{m,\beta}^\epsilon\right)=\begin{cases}
\frac{-c_{\C^*}\left(R\pi_*\cT_k^W \right)}{d_kz^{a+1}a!} & \exists a 
\in \N \text{ s.t. }\beta=-\frac{a+1}{d_k}\text{, }m=\left\langle -\frac{a+1}{d_k}\right\rangle\\
0 &\text{otherwise.}
\end{cases}
\]

Packaging these residues in a generating series, define $\hat\cL_c^{P_k,W,\epsilon}$ as the formal subspace of $\V_c^{P_k}[[t]]$ consisting of points of the form
\[
I_c^{P_k,W,\epsilon}(Q,-z)+\bt(z)+\sum_{n,\beta \atop m\in\frac{1}{d_k}\Z/\Z}\frac{Q^{\beta}}{n!}\left\langle\bt(\psi)^n\;\frac{\one_{(m)}^k}{-z-\psi}\right\rangle_{0,n+1,\beta}^{P_k,W,\epsilon,c}(\one_{(m)}^k)^\vee,
\]
where $(-)^\vee$ denotes the dual with respect to the twisted pairing $(-,-)_c^{P_k}$ and
\[
I_c^{P_k,W,\epsilon}(Q,z):=\frac{z}{d_k}\sum_{a\in\N \atop 0\leq a <\epsilon}\frac{Q^{-\frac{a+1}{d_k}}}{z^aa!}\;c_{\C^*}\left(R\pi_*\cT_k^W \right)\left(\one^k_{\left(-\frac{a+1}{d_k}\right)}\right)^\vee
.\]

\begin{theorem}[c.f. Ross--Ruan \cite{RR}]\label{thm:wallcrossing}
For any $\epsilon\geq 0$, $\hat\cL_c^{P_k,W,\epsilon}$ is a formal germ of the over-ruled Lagrangian cone $\cL_c^{P_k,W}$.
\end{theorem}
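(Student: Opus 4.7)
My plan is to prove Theorem \ref{thm:wallcrossing} by applying $\C^*$-equivariant virtual localization on the graph space $G\M^{d_k}_{0,\vec{m}+m}(\beta)^{\epsilon}$ and interpreting the two types of fixed-locus contributions as, respectively, points of $\hat\cL_c^{P_k,W,\epsilon}$ and tangent directions to $\cL_c^{P_k,W}$. This follows the philosophy of the Ciocan-Fontanine--Kim wall-crossing formalism in quasi-map theory as adapted to weighted spin curves by Ross--Ruan. The basic setup is already built into the statement: for a suitable choice of integrand obtained by gluing the $\bt$-dependent insertions at the finite marked points together with a residue class $\frac{\one_{(m)}^k}{-z-\psi}$ at the last marked point, the computation on the graph space produces an element of $\V^{P_k}_c$ whose natural interpretation in terms of fixed-locus residues forces the desired geometric identity.

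The first step is to verify that the main fixed locus $F^\epsilon_{\vec{m}+m,\beta}$, where everything stable is concentrated over $[1{:}0]$ and only the last marked point sits at $[0{:}1]$, produces precisely a general point of $\hat\cL_c^{P_k,W,\epsilon}$. The residue calculations already summarized in the excerpt handle the stable cases; the extremal residues for $n=0$ and $\beta>-\frac{1+\epsilon}{d_k\epsilon}$ contribute the $c_{\C^*}\left(R\pi_*\cT_k^W\right)$ terms that assemble into $I_c^{P_k,W,\epsilon}$. The dilaton-shifted input $\bt(z)$ appears by standard manipulations of the graph-space insertions, so the total main-locus contribution reproduces the formal series in the definition of $\hat\cL_c^{P_k,W,\epsilon}$.

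The second step is to analyze the remaining fixed loci, where a non-parametrized ``bubble'' carrying some of the marked points, basepoints, and tails attaches to the parametrized $\P^1$ via a node over $[0{:}1]$. On such bubbles the stability condition promotes to ampleness of $\omega_{\log}\otimes\O(\delta B)$ for all $\delta>0$, i.e.\ to $\epsilon=\infty$-stability; the smoothing of the node produces the factor $\frac{1}{z-\psi}$ characteristic of $J$-function-type insertions. Resumming over the combinatorics of such bubbles expresses the sum of these residues as a $\tilde\Lambda_{\nov}^\T$-linear combination of elements of the form $z\,\partial_{\tau^\mu}J$ evaluated at points of $\hat\cL_c^{P_k,W,\infty}$. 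By Theorem \ref{thm:givental} applied in this $c$-twisted FJRW setting (which holds because the twisted spin theory satisfies (SE), (DE), and (TRR) at $\epsilon=\infty$), such elements lie in $zT$ for a tangent space $T$ to $\cL_c^{P_k,W}$, hence on the cone itself. Since the localization identity expresses the main-locus contribution as the negative of the bubble contributions modulo an explicit base point on $\cL_c^{P_k,W}$, we conclude that each point of $\hat\cL_c^{P_k,W,\epsilon}$ lies on $\cL_c^{P_k,W}$; dimension considerations in $\bt$ upgrade this to the formal-germ statement.

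The main obstacle I anticipate is the careful bookkeeping at the boundary between the $I$-function and the correlator terms: the extremal degrees $\beta=-\frac{a+1}{d_k}$ with $0\le a<\epsilon$ correspond to one-pointed components that are unstable in the $\epsilon$-moduli but stable in the $\epsilon=\infty$-moduli, so they shift roles between the two sides of the wall-crossing identity as $\epsilon$ varies. Producing the correct matching between the $c_{\C^*}\left(R\pi_*\cT_k^W\right)$ residues on the main locus and the bubble residues on the complementary loci requires a uniform treatment of these extremal tails, which is where the precise stability condition (2) in the definition of $\M_{g,n}^d(\beta)^\epsilon$ and the $\epsilon$-dependent bound on the range of $a$ in $I_c^{P_k,W,\epsilon}$ must be invoked in concert. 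Once this matching is set up cleanly, the remainder of the argument is a formal application of the Givental cone characterization.
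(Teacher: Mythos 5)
Your approach—localizing the graph space $G\M^{d_k}_{0,n+1}(\beta)^\epsilon$ under the $\C^*$-action, interpreting the main fixed locus $F^\epsilon_{\vec m+m,\beta}$ as producing a general point of $\hat\cL^{P_k,W,\epsilon}_c$, and the remaining fixed loci as producing tangent directions to $\cL^{P_k,W}_c$—is precisely the Ross--Ruan technique that the paper's one-sentence proof invokes by citation, so at that level you are doing the same thing the paper does.

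However, your second step contains a substantive error. You write that on the bubbles over $[0{:}1]$ "the stability condition promotes to ampleness of $\omega_{\log}\otimes\O(\delta B)$ for all $\delta>0$, i.e.\ $\epsilon=\infty$-stability." In the paper's conventions, ampleness for all $\delta>0$ is $\epsilon=0$-stability, the \emph{strongest} condition, not $\epsilon=\infty$; so this identification is already inconsistent. More importantly, the graph-space definition in the paper imposes the same $\epsilon$-stability (ample for all $\delta>\epsilon$) on \emph{every} non-parametrized component, with no distinction between those rooted over $[1{:}0]$ and those over $[0{:}1]$. The bubbles over $[0{:}1]$ are therefore $\epsilon$-stable weighted spin curves that can carry basepoints, and resumming their residues produces $\epsilon$-twisted-spin correlators, not $\epsilon=\infty$-correlators. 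Your claim that the resummed bubble contributions are $z\,\partial_{\tau^\mu}J$-type elements at points of $\hat\cL^{P_k,W,\infty}_c$—hence tangent to, hence on, $\cL^{P_k,W}_c$—presupposes exactly the $\epsilon$-to-$\infty$ relationship the theorem asserts, so the step is circular as written. In Ross--Ruan this circle is broken by an inductive structure on the degree: the non-main fixed loci carry strictly smaller degree than the total, so their contributions are placed on the cone by the inductive hypothesis (anchored at the smallest degrees, where the $\epsilon$- and $\epsilon=\infty$-moduli agree) before the over-ruled property of $\cL^{P_k,W}_c$ is invoked to constrain the main-locus contribution. Your observations about the extremal residues assembling $I_c^{P_k,W,\epsilon}$ and the shifting role of the one-pointed $\beta=-\frac{a+1}{d_k}$ tails as $\epsilon$ varies are correct, and that is exactly where the bookkeeping of the induction is most delicate—you have located the right obstacle; the fix is to build the induction around it rather than appeal to an $\epsilon=\infty$-stability that the bubbles do not satisfy.
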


\begin{proof}
This result is a twisted version of the main result in \cite{RR}. Using localization on graph spaces, the techniques of \cite{RR} can be applied directly to prove this result.
\end{proof}

\begin{remark}\label{twistedI}
When $\epsilon = 0$ and the characteristic class $c$ is chosen as in Remark \ref{rmk:gwun}, the twisted $I$-function can be computed explicitly:
\[
I_{c_k}^{P_k,W}(Q,z)=z\sum_{a\in\frac{1}{d_k}\Z \atop a>0}Q^{-a}\frac{\prod_i\prod_{0\leq b <aw_i \atop \langle b \rangle =\langle aw_i \rangle }(-bz-w_i\alpha_k)}{\prod_{j}\prod_{0<b<ad_j \atop \langle b \rangle = \langle ad_j \rangle}(bz+d_j(\alpha_k-\alpha_j))}\one_{\left( a\right)}^k.
\]
In particular, the localization isomorphism immediately implies that
\[
I_\T^{X,W}(Q,z)=\sum_{k=1}^N I_{c_k}^{P_k,W}(Q,z),
\]
where $I_\T^{X,W}$ is defined in \eqref{Ifunction}.
\end{remark}

\subsection{Symplectomorphisms}

The relationship between the twisted and untwisted cones for $\epsilon = \infty$ is given by a result precisely analogous to Tseng's:

\begin{theorem}\label{thm:clader}
Let $\Delta$ be the symplectic transformation defined in Theorem \ref{thm:tseng}. Then 
\[
\Delta(\cL^{P_k,W}_{\un}) = \cL^{P_k,W}_c.
\]
\begin{proof}
This is simply a twisted version of the symplectomorphism computed by Lee--Priddis--Shoemaker in Theorem 4.3 of \cite{LPS}, and the proof is a straightforward generalization of theirs.  In particular, the key point is that the action of the quantized operator $\widehat{\Delta}$ on total descendant potentials is defined in terms of the dilaton shift, which differs in the twisted GW and twisted spin cases.  This difference precisely accounts for the discrepancy in the characteristic classes of $\mathcal{T}_k$ and $\mathcal{T}_k^W$.
\end{proof}
\end{theorem}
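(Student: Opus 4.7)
The proof follows the strategy of Lee--Priddis--Shoemaker's Theorem 4.3 in \cite{LPS}, suitably adapted to the twisted setting. The plan is to apply Coates--Givental's infinitesimal criterion: for each twisting parameter $s_l^i$ and $\ts_l^j$, verify that the induced tangent vector to the family of cones $\cL_c^{P_k,W}$ matches the infinitesimal action of the quantized symplectic transformation $\widehat{\Delta}$ on $\cL_{\un}^{P_k,W}$. Since $\Delta$ reduces to the identity at $s = \ts = 0$ and the untwisted theory is recovered there, integrating this infinitesimal identity along the family of parameters yields the global symplectomorphism $\Delta(\cL_{\un}^{P_k,W}) = \cL_c^{P_k,W}$.

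First I would differentiate the $c$-twisted genus-zero potential $\F_{k,c}^W$ with respect to $s_l^i$ (the case of $\ts_l^j$ is analogous, with $\cL^{\otimes -d_j}\otimes\omega_{\log}$ replacing $\cL^{\otimes w_i}$). This replaces the characteristic class factor in the integrand by $\ch_l(R\pi_*\cL^{\otimes w_i})$ times the original twist. Applying Grothendieck--Riemann--Roch to $\cL^{\otimes w_i}$ on the universal curve $\pi: \mathcal{C} \rightarrow \M^{d_k}_{0,\vec m}(\beta)^{\infty}$ decomposes this Chern character into a descendant contribution from $\psi$-classes at marks, a boundary contribution from nodal divisors, and an age contribution from the orbifold structure at marks and nodes. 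The orbifold Todd computation on a twisted sector of age $\langle w_i m\rangle$ produces precisely the Bernoulli coefficient $B_{l+1}(\langle w_im\rangle)/(l+1)!$ appearing in the diagonal entries of $\Delta$; the nodal contributions are resolved through the gluing axiom, and the descendant contributions account for the $z^l$-scaling in the action of $\widehat{\Delta}$.

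The principal subtlety, and the main obstacle, stems from the spin-theoretic dilaton shift being by $Q^{1/d_k}\one_{(1/d_k)}^k z$ rather than by $\one z$ as in GW theory. This alters the action of the quadratic Hamiltonian of $\widehat{\Delta}$ on the descendant potential, relative to Tseng's original argument for Theorem \ref{thm:tseng}. The compensating effect comes precisely from the $\omega_{\log}$ twist that distinguishes $\cT_k^W$ from $\cT_k$ on the $p_j$-summands: under GRR, $\omega_{\log}$ contributes additional terms localized at marked points whose net effect exactly cancels the shift discrepancy, via the Bernoulli recursion $B_{l+1}(x+1) - B_{l+1}(x) = (l+1)x^l$. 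Once this compensation is verified, the remainder of the argument follows the template of \cite{LPS}, with all remaining steps generalizing without essential modification to the presence of the twisting parameters.
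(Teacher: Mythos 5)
Your proposal takes essentially the same approach as the paper, which defers to Lee--Priddis--Shoemaker's Theorem 4.3 and highlights exactly the two features you isolate: the spin-theoretic dilaton shift $Q^{1/d_k}\one^k_{(1/d_k)}z$ (versus $-\one z$) governing the action of $\widehat{\Delta}$, and the $\omega_{\log}$-twist distinguishing $\cT_k^W$ from $\cT_k$, with these two discrepancies precisely compensating. Your filling-in of the Coates--Givental/Tseng infinitesimal GRR argument is the correct underlying mechanism; the only thing to be careful about is your specific claim that the cancellation runs through the identity $B_{l+1}(x+1)-B_{l+1}(x)=(l+1)x^l$, which is plausible but would need to be checked against the actual boundary and age terms, since $\omega_{\log}$ is trivial at marked points and so does not simply shift the Bernoulli argument by one there.
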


\subsection{Comparison of untwisted theories}

When $\epsilon = \infty$ and $c = 1$, the (untwisted) spin theory can be directly related to untwisted GW theory.

\begin{lemma}\label{lem:untwisted}
Suppose that $\phi_k = \one^k_{(m_k)}$ for $k=1, \ldots, n$.  Then
 \[
\langle \phi_1 \psi^{a_1} \cdots \phi_n \psi^{a_n}\rangle^{P_k,\un}_{g,n,\beta=-\sum m_k}=\langle \phi_1 \psi^{a_1} \cdots \phi_n \psi^{a_n}\rangle^{P_k,W,\infty,\un}_{g,n,\beta=\frac{2g-2+n}{d_k}-\sum m_k}.
\]
In particular, both correlators are equal to
\[
d_k^{2g-1}\int_{[\M_{g,n}]} \psi_1^{a_1} \cdots \psi_n^{a_n}. 
\]
\end{lemma}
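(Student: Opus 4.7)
The plan is to exhibit both moduli spaces appearing in the lemma as finite flat covers of $\M_{g,n}$ of the same degree $d_k^{2g-1}$, along which the $\psi$-classes pull back from $\M_{g,n}$; the projection formula will then yield the common value.

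I would first identify the Gromov--Witten moduli space. Since $P_k\subset X$ has isotropy $\Z_{d_k}$, we have $P_k\cong B\Z_{d_k}$, and a stable map to $P_k$ in the sense of Definition \ref{stablemap} is the datum of a prestable orbifold curve together with an orbifold line bundle $L$ equipped with a nowhere-vanishing section of $L^{\otimes -d_k}$, i.e.\ a canonical trivialization of $L^{\otimes d_k}$. The requirement that $L^{\otimes d_k}$ be trivial forces $d_k m_k\in\Z$ and, reading off coarse degrees, $\beta=-\sum m_k$, matching the hypothesis. Since the tangent complex of $B\Z_{d_k}$ vanishes, there are no obstructions, so the virtual class coincides with the fundamental class of this stack of ``$d_k$-torsion orbifold line bundles with prescribed multiplicities.''

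I would then identify the $\epsilon=\infty$ spin moduli, where an element carries $L^{\otimes d_k}\cong\omega_{\log}\otimes\O(-B)$. The coarse-degree relation $d_k(\beta+\sum m_k)=(2g-2+n)-\deg B$ shows that the hypothesis on $\beta$ forces $\deg B=0$, hence $B=0$, and the moduli becomes the standard stack of $d_k$-th roots of $\omega_{\log}$ with the prescribed multiplicities. In the untwisted case the integrand again reduces to the pure $\psi$-monomial against the fundamental class.

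Finally, I would compute the degree of the forgetful map to $\M_{g,n}$ in both cases. Over a smooth curve $C$, each fiber is a torsor under $\mathrm{Pic}^0(C)[d_k]\cong(\Z/d_k)^{2g}$, hence has $d_k^{2g}$ points; the universal $\Z_{d_k}$ of gerbe automorphisms acting trivially on the underlying curve then contributes a factor of $1/d_k$, giving total degree $d_k^{2g-1}$. Because the $\psi$-classes in the lemma are defined via the coarse cotangent line, they pull back from $\M_{g,n}$, and the projection formula gives the common value $d_k^{2g-1}\int_{[\M_{g,n}]}\psi_1^{a_1}\cdots\psi_n^{a_n}$. The main obstacle will be controlling this degree computation across the boundary of $\M_{g,n}$, where nodal orbifold structures and root-stack constructions can introduce subtle gerbe contributions; I expect this to be handled by flatness of the forgetful map together with the explicit description of the fiber over a nodal curve via root stacks, reducing the verification to the generic smooth case already established.
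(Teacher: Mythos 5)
Your proof is correct and follows the same approach as the paper's (very terse) proof: both moduli spaces are recognized as degree-$d_k^{2g-1}$ covers of $\M_{g,n}$ along which the coarse $\psi$-classes pull back, and the projection formula gives the result. The paper simply asserts these two facts in one sentence; your write-up supplies the same standard torsion-line-bundle/gerbe count and $B=0$ degree bookkeeping that justify them.
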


\begin{proof}
The condition on the degrees ensures that the moduli spaces $\M_{g,\vec m}(P_k,\beta)$ and $\M_{g,\vec m}^{d_k}(\beta)^\infty$ are nonempty. The explicit formula follows from the fact that both moduli spaces admit degree-$d_k^{2g-1}$ maps to $\M_{g,n}$ and the $\psi$-classes are pulled back via this maps.
\end{proof}

\begin{lemma}\label{thm:uncones}
We have an identification of untwisted Lagrangian cones:
\[
\cL^{P_k}_{\un}=\cL^{P_k,W}_{\un}.
\]
\end{lemma}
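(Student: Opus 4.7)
The plan is to establish an identity between the $J$-functions of the two theories via a rescaling of Novikov variables and then invoke the ruling description \eqref{eq:ruling} of the Lagrangian cones.

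First, I would apply Lemma \ref{lem:untwisted} to each correlator appearing in the spin and GW $J$-functions. For fixed multiplicities $m_1,\dots,m_{n+1}$ at the marked points, the moduli spaces $\M_{0,\vec m}(P_k,\beta)$ and $\M_{0,\vec m}^{d_k}(\beta)^{\infty}$ are each nonempty for only a single value of $\beta$, and these values differ by exactly $(n-1)/d_k$ in genus zero with $n+1$ marked points. This degree shift combines with the factor of $(Q^{1/d_k})^n$ arising when one substitutes $\tau \mapsto Q^{1/d_k}\tau$ into the GW $J$-function. A direct bookkeeping calculation then yields
\[
J^{P_k}_{\un}(Q^{1/d_k}\tau, -z) = -\one z + \one^k_{(1/d_k)}Q^{2/d_k}z + Q^{1/d_k}J^{P_k, W, \infty}_{\un}(\tau, -z),
\]
in which the two linear terms on the right correspond precisely to the different dilaton shifts of the GW and spin theories.

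Second, differentiating this identity with respect to $\tau^\mu$ (the constant terms drop out), the chain rule gives
\[
\partial_{\tau^\mu}J^{P_k, W, \infty}_{\un}(\tau, -z) = (\partial_{\tau^\mu}J^{P_k}_{\un})(Q^{1/d_k}\tau, -z).
\]
Invoking the ruling description \eqref{eq:ruling} for both cones (recall that the $\epsilon=\infty$ spin theory is an axiomatic Gromov--Witten theory, so Theorem \ref{thm:givental} applies), a general point of $\cL^{P_k,W}_{\un}$ has the form $z\sum_\mu c_\mu(z)\,\partial_{\tau^\mu}J^{P_k,W,\infty}_{\un}(\tau,-z)$, which by the identity above equals $z\sum_\mu c_\mu(z)\,(\partial_{\tau^\mu}J^{P_k}_{\un})(\tilde\tau,-z)$ with $\tilde\tau=Q^{1/d_k}\tau$, and hence is a point of $\cL^{P_k}_{\un}$. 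The reverse inclusion follows symmetrically via the substitution $\tau\mapsto Q^{-1/d_k}\tau$.

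The main technical point is verifying that the reparametrization $\tau\mapsto Q^{1/d_k}\tau$ induces a bijection on parameter spaces over the appropriate Novikov rings governing both cones; since $\V^{P_k}_{\un}$ permits Laurent expansions in $Q^{-1/d_k}$, this is not a serious obstacle. A secondary concern is ensuring that the same substitution $\tilde\tau=Q^{1/d_k}\tau$ in the GW $J$-function continues to produce a valid formal germ of $\cL^{P_k}_{\un}$, which is automatic since the ruling description is manifestly invariant under reparametrizations of the base $\tau$.
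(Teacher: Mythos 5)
Your proof is correct and takes essentially the same approach as the paper's: both apply Lemma \ref{lem:untwisted} to the correlators appearing in the two $J$-functions, use the rescaling $\tau=Q^{1/d_k}t$ to absorb the degree shift $(n-1)/d_k$, and conclude via the ruling description \eqref{eq:ruling}. The only organizational difference is that you establish the direct $J$-function identity and then differentiate to cancel the dilaton-shift discrepancy, whereas the paper reduces to, and then verifies, the single identity $J^{P_k}_{\un}(\tau,z)=z\,\partial_{t^0}J^{P_k,W}_{\un}(t,z)$ by invoking the string equation on the GW side.
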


\begin{proof}
In either theory, there is a $J$-function, defined by
\[J^{P_k}_{\un}(\tau, z) = \one^k_{(0)}z + \tau + \sum_{n,\beta,m} \frac{Q^{\beta}}{n!} \left\langle \tau^n \frac{\one^k_{(m)}}{z-\psi} \right\rangle^{P_k,\un}_{0,n+1,\beta}\left(\one^k_{(m)}\right)^{\vee}\]
in GW theory, and
\[J^{P_k,W}_{\un}(t, z) = Q^{\frac{1}{d_k}}\one^k_{(1/d_k)}z + t + \sum_{n,\beta,m} \frac{Q^{\beta}}{n!} \left\langle t^n \frac{\one^k_{(m)}}{z-\psi} \right\rangle^{P_k,W,\un}_{0,n+1,\beta}\left(\one^k_{(m)}\right)^{\vee},\]
in the GLSM.  Here,
\[\tau = \sum_m \tau^m \one^k_{(m)},\]
where the sum runs over a basis for $H^*_{CR}(P_k)$, and $t$ is defined similarly.

The Lagrangian cone for each theory is spanned by linear combinations of derivatives of the corresponding $J$-function.  Thus, it suffices to prove that there exists a change of variables $\tau = \tau(t)$ such that
\[
J^{P_k}_{\un}(\tau,z)=z\sum_{m} c_{m}(z) \frac{\partial}{\partial t^{m}}J^{P_k,W}_{\un}(t,z).
\]
By matching the linear coefficients of $z$, this is equivalent to proving that
\[
J^{P_k}_{un}(\tau,z)=z\frac{\partial}{\partial t^{0}}J^{P_k,W}_{un}(t,z).
\]
Define $\tau = \tau(t)$ by
\[
\tau^{m}:=Q^{\frac{1}{d_k}}t^{m}.
\]
Then:
\begin{align*}
z\frac{\partial}{\partial t^{0}}J^{P_k,W}_{\un}(t,z)&=z\one^k_{(0)}+z\sum_{\beta,n,m}\frac{Q^{\beta}}{n!}\left\langle\one^k_{(0)}\; t^n\;\frac{\one_{(m)}^k}{z-\psi}\ \right\rangle_{0,n+2,\beta}^{P_k,W,\un}\left(\one_{(m)}^k\right)^\vee\\
&=z\one^k_{(0)}+z\sum_{\beta,n,m}\frac{Q^{\beta-\frac{n}{d_k}}}{n!}\left\langle\one^k_{(0)}\; \tau^n\;\frac{\one_{(m)}^k}{z-\psi}\ \right\rangle_{0,n+2,\beta-\frac{n}{d_k}}^{P_k,\un}\left(\one_{(m)}^k\right)^\vee\\
&=J^{P_k}_{\un}(\tau,z).
\end{align*}
The second equality follows from Lemma \ref{lem:untwisted} and the third equality follows from the string equation in GW theory.
\end{proof}

\subsection{Comparison of twisted theories}

\begin{corollary}\label{cor:twisted}\label{cor:twistedid}
We have an identification of $c$-twisted Lagrangian cones:
\[
\cL^{P_k}_{c}=\cL^{P_k,W}_{c}.
\]
\end{corollary}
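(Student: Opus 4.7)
The plan is to obtain the corollary as an immediate consequence of the three results that precede it: the identification of untwisted cones in Lemma \ref{thm:uncones}, together with the twisted/untwisted symplectomorphisms of Theorem \ref{thm:tseng} (on the GW side) and Theorem \ref{thm:clader} (on the spin side). The crucial observation is that the symplectic transformation $\Delta$ appearing in Theorem \ref{thm:clader} is literally the same operator as the one appearing in Theorem \ref{thm:tseng}: both are defined diagonally on the twisted sectors of $H^*_{CR}(P_k)$ by the same formula involving Bernoulli polynomials and the parameters $\{s_l^i,\ts_l^j\}$ that specify the multiplicative class $c$ in \eqref{eq:charclass}. In particular, $\Delta$ does not depend on whether we are doing GW theory or spin theory, and it acts on the common symplectic vector space $\V^{P_k}_c$ (in which both $\cL^{P_k}_c$ and $\cL^{P_k,W}_c$ live by construction).

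Concretely, I would simply chain the identifications
\[
\cL^{P_k}_{c} \;=\; \Delta\bigl(\cL^{P_k}_{\un}\bigr) \;=\; \Delta\bigl(\cL^{P_k,W}_{\un}\bigr) \;=\; \cL^{P_k,W}_{c},
\]
where the first equality is Theorem \ref{thm:tseng}, the second is Lemma \ref{thm:uncones}, and the third is Theorem \ref{thm:clader}. There is essentially no new content to verify; the work has all been done in the cited results.

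The one point that deserves a sentence of justification is that the untwisted symplectic vector spaces underlying $\cL^{P_k}_{\un}$ and $\cL^{P_k,W}_{\un}$ are identified compatibly with the two dilaton shifts, so that Lemma \ref{thm:uncones} genuinely yields a single subset of $\V^{P_k}_{\un}$ before applying $\Delta$. This is already built into the proof of Lemma \ref{thm:uncones}, where the change of variables $\tau^m = Q^{1/d_k} t^m$ matches the different dilaton shifts used in the two theories; so no further argument is needed. There is no real obstacle here — the corollary is a direct formal consequence of the three previously established results, and the proof can be stated in one line.
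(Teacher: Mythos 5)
Your proof is correct and is essentially identical to the paper's: the paper likewise observes that the untwisted cones coincide (Lemma \ref{thm:uncones}) and that the twisting symplectomorphism $\Delta$ is literally the same operator in both Theorem \ref{thm:tseng} and Theorem \ref{thm:clader}, and concludes by composition. Your extra sentence about the dilaton-shift compatibility being absorbed into the proof of Lemma \ref{thm:uncones} is accurate but not something the paper spells out.
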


\begin{proof}
This follows from the fact that the untwisted cones are identified (Lemma \ref{thm:uncones}) along with the fact that the symplectomorphism taking the untwisted to the twisted cone is the same in either case (Theorems \ref{thm:tseng} and \ref{thm:clader}).
\end{proof}

In conjunction with Theorem \ref{thm:wallcrossing}, this completes the comparison of the GW and GLSM correlators appearing at the vertices of the localization graphs.

\section{Formal subspace characterization}
\label{proof}

Having identified the correlators appearing as vertex contributions in the GLSM and GW theory, we must leverage this comparison to relate the full localization expressions for the two theories.  To do so, we will need a characterization of points on the formal subspace $\hat\cL_{\T}^{X,W}$. This characterization is motivated by the cone characterization appearing in work of Coates--Corti--Iritani--Tseng (\cite{CCIT2}, Theorem 41).

First, we provide a natural notion of what it means to be a point on the cone over an auxiliary set of formal parameters $x=(x_1,\dots,x_K)$.

\begin{definition}
	A \emph{$\tilde\Lambda_{\nov}^{\T}[[x]]$-valued point of $\hat\cL_{\T}^{X,W}$} is a point of $\V_{\T}^X[[x]]$ of the form
	\[
	I_\T^{X,W}(Q,-z) + \bt(z) +\sum_{n,\beta,\mu} \frac{Q^\beta}{n!}\left\langle \bt(\psi)^n\; \frac{\Phi_\mu}{-z-\psi}\right\rangle^{X,W,\T}_{0,n+1,\beta} \Phi^\mu
	\]
	for some $\bt(z)\in\V_{\T}^{X,+}[[x]]$ for which $\bt(z)|_{x=0}=0.$
\end{definition}

Notice that $\bt(z)$ is defined over the base ring and, in particular, depends formally on the Novikov parameter $Q$.

The analogous definition can be made at each fixed point.  Namely, we define a $\tilde\Lambda_{\nov}^{\T}[[x]]$-valued point of $\hat\cL_{c_k}^{P_k,W}$ to be a point of $\V_{c_k}^{P_k}[[x]]$ of the form
\[
I_{c_k}^{P_k,W}(Q,-z) + \bt_k(z) +\sum_{n,\beta,m} \frac{Q^\beta}{n!}\left\langle \bt_k(\psi)^n\; \frac{\one_{(m)}^k}{-z-\psi}\right\rangle^{P_k,W,\epsilon=0,c_k}_{0,n+1,\beta} \left(\one_{(m)}^k\right)^\vee
\]
for some $\bt_k(z)\in\V_{c_k}^{P_k,+}[[x]]$ for which $\bt_k(z)|_{x=0}=0.$

We now fix the notation required in the statement of the characterization.  For any $\bff=\bff(z)\in\V_\T^X$, let $\bff_k$ denote the restriction of $\bff$ to $H_{CR}^*(P_k)$ and let $\bff_{k,m}$ denote the coefficient of the fixed-point basis element $\one_{(m)}^k$. 

For a given $k\neq k'$, $m\in\frac{1}{d_k}\Z/\Z$, and $m'\in\frac{1}{d_{k'}}\Z/\Z$, set
\[
E_{k,k'}^{m,m'}:=\{\beta\in\Z-m-m'\;|\; \beta<0\}.
\]
That is, $E_{k,k'}^{m,m'}$ is the set of possible degrees $\beta_e$ for which $e$ is an edge in a localization graph adjacent to vertices $v$ and $v'$ with $k_v=k$, $k_{v'}=k'$, $m_{e,v}=m$, and $m_{e,v'}=m'$.

For $\beta\in E^{m,m'}_{k,k'}$, define the {\it recursive term} 
\[
\RC_{k,k'}^{m,m'}(\beta):=\frac{1}{d_{k'}\beta}\frac{\prod_{i}\prod_{0\leq b<-\beta w_i \atop \langle b \rangle =\langle mw_i \rangle}\left(\frac{b}{\beta}(\alpha_{k'}-\alpha_{k})-w_i\alpha_{k}\right)}{\prod_{j}\prod_{0 < b\leq-\beta d_j \atop \langle b \rangle =\langle md_j \rangle}'\left(\frac{b}{\beta}(\alpha_{k}-\alpha_{k'})+d_j(\alpha_{k}-\alpha_j)\right)}.
\]
Notice that the recursive term is equal to $d_ke_T(N_k^m)\;\Contr_{\Gamma}^W(e)$, where $e$ is an edge in a localization graph as above and $N_k^m$ is defined in Section \ref{flag}. For notational convenience, set
\[
\alpha_{k,k'}^\beta:=\frac{\alpha_{k'}-\alpha_{k}}{\beta}.
\]

\begin{theorem}\label{thm:conechar} Let $\bff \in \V_{\T}^X[[x]]$ be such that $(\bff|_{x=0})|_{Q=\infty}=0$. Then $\bff$ is a $\tilde\Lambda_{\nov}^{\T}[[x]]$-valued point of $\hat\cL_{\T}^{X,W}$ if and only if the following conditions hold:
\begin{enumerate}

\item[(C1)] For each $k,m$, the restriction $\bff_{k,m}$ lies in $\C(z,\alpha)((Q^{-\frac{1}{d}}))[[x]]$ and, as a rational function of $z$, each coefficient of a monomial in $Q$ and $x$ is regular except possibly for a pole at $z=0$, a pole at $z=\infty$, and poles at $z=\alpha_{k,k'}^\beta$ with $\beta\in E_{k,k'}^{m,m'}$ for some $k',m'$.

\item[(C2)] For each $k\neq k'$, $m$, $m'$, and $\beta\in E_{k,k'}^{m,m'}$, we have the following recursion:
\[
\mathrm{Res}_{z=\alpha_{k,k'}^\beta}\bff_{k,m}=Q^\beta \RC_{k,k'}^{m,m'}(\beta) \;\bff_{k',-m'}\big|_{z=\alpha_{k,k'}^\beta}.
\]

\item[(C3)] The Laurent expansion of $\bff_k$ at $z=0$ is a $\tilde\Lambda_{\nov}^{\T}[[x]]$-valued point of $\hat\cL^{P_k,W}_{c_k}\subset \V^{P_k}_{c_k}$.

\end{enumerate}
\end{theorem}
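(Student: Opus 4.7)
The plan is to prove the characterization by applying virtual localization to the correlators appearing in the defining series of $\hat\cL_{\T}^{X,W}$, and then analyzing the resulting graph sums by separating the contributions according to the localization type of the distinguished last marked point (the one decorated by $\frac{\Phi_\mu}{-z-\psi}$). The key structural input is the vertex correspondence of Section \ref{hybrid}, which relates vertex contributions in the GLSM localization to points on the cones $\cL_{c_k}^{P_k,W}$.

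For the ``only if'' direction, suppose $\bff$ arises from some $\bt(z)$ vanishing at $x=0$, together with the $I$-function $I_\T^{X,W}$. Apply localization with respect to $\T$ on $\QM^W_{0,n+1}(X,\beta)$ to each correlator, and organize graphs according to the vertex $v^*$ carrying the last marked point and its decoration (index $k$ and multiplicity $m$). There are two cases. If $v^*$ is stable, then $\psi_{n+1}$ is a genuine cotangent line class and $\frac{1}{-z-\psi_{n+1}}$ expands as a power series in $\psi_{n+1}/z$, contributing to the Laurent expansion at $z=0$ of $\bff_{k,m}$. Summing over all graphs with $v^*$ fixed groups the remaining graph data into vertex contributions that, by Corollary \ref{cor:twistedid} and Theorem \ref{thm:wallcrossing}, are precisely a point of $\hat\cL^{P_k,W}_{c_k}$; the global input $\bt$ together with the edges emanating from $v^*$ supply the variable $\bt_k(z)$. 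This yields (C3) and shows the only additional poles in $z$ come from the opposite case. If $v^*$ is unstable, it is a leaf attached by a single edge $e$ of degree $\beta$ to a vertex $v'$ of index $k'$, and the restriction $\psi_{n+1}|_{F^W_\Gamma}=\alpha_{k,k'}^{\beta}$ produces the simple pole at $z=\alpha_{k,k'}^\beta$ demanded by (C1). Taking the residue isolates all graphs with this distinguished leaf; the edge and flag factors combine to give exactly the prefactor $Q^\beta\,\RC_{k,k'}^{m,m'}(\beta)$, and the remaining graph sum (with $v'$ now carrying the distinguished marked point in the opposite multiplicity $-m'$ as dictated by balancing of the edge) is precisely $\bff_{k',-m'}$ evaluated at $z=\alpha_{k,k'}^\beta$, establishing (C2).

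For the ``if'' direction, I would proceed by induction, say on the total degree in $(Q^{1/d},x)$ (using that $(\bff|_{x=0})|_{Q=\infty}=0$ to seed the induction and match the leading $I$-function term $I_\T^{X,W}(Q,-z)$, which itself satisfies (C1)--(C3) by Remark \ref{twistedI} and a direct computation of its residues). Suppose $\bff$ satisfies (C1)--(C3). Condition (C3) provides, at each fixed point $P_k$, a germ $\bt_k(z)\in\V^{P_k,+}_{c_k}[[x]]$ whose associated point on $\hat\cL^{P_k,W}_{c_k}$ equals the Laurent expansion of $\bff_k$ at $z=0$. Via the fixed-point basis isomorphism these assemble into a single $\bt(z)\in\V_\T^{X,+}[[x]]$ with $\bt|_{x=0}=0$. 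The point $\bff'$ constructed from this $\bt(z)$ using the GLSM formula \eqref{eq:formalGLSM} is, by the ``only if'' direction, a point of $\hat\cL_\T^{X,W}$ and hence satisfies (C1)--(C3), with the same Laurent expansion at $z=0$ on each $\bff'_k$ as $\bff_k$. The difference $\bff-\bff'$ has no pole at $z=0$, has residues at $z=\alpha_{k,k'}^\beta$ satisfying the same recursion (C2) with respect to itself, and vanishes at $z=\infty$; a rigidity argument (applying the recursion iteratively lowers the order in $Q^{-1/d}$, while the pole structure forces the difference to be globally regular in $z$, hence identically zero) shows $\bff=\bff'$.

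The main technical obstacle will be the rigidity step in the ``if'' direction: one must verify that the recursion (C2), together with the pole structure in (C1) and the prescribed Laurent expansion at $z=0$ from (C3), actually suffices to pin down $\bff$ uniquely. This amounts to verifying that iteratively extracting residues and removing them strictly decreases a well-founded quantity (roughly, the total absolute value of the negative exponent of $Q^{1/d}$ plus the $x$-order), so that the induction terminates and assembles to a globally rational function with the claimed pole set. A secondary subtlety, handled in the ``only if'' analysis, is the correct bookkeeping of the dilaton shifts: the twisted spin cones $\hat\cL^{P_k,W}_{c_k}$ are centered at $I^{P_k,W}_{c_k}$ rather than $-\one z$, and the decomposition $I_\T^{X,W}=\sum_k I^{P_k,W}_{c_k}$ of Remark \ref{twistedI} is what ensures the pieces of (C3) globalize to a single point of $\hat\cL_\T^{X,W}$.
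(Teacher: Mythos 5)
Your ``only if'' direction matches the paper's proof in structure: apply localization, sort graphs by whether the vertex carrying the distinguished marked point is unstable (Type A, yielding the poles at $z=\alpha_{k,k'}^\beta$) or stable (Type B, contributing to the Laurent expansion at $z=0$), check (C1) and (C2) by computing poles and residues of the $I$-function and of the graph sums via the factorization $\Contr^W_\Gamma = d_k e_T(N^m_k)\Contr^W_\Gamma(e)\Contr^W_{\Gamma'}$, and check (C3) by recognizing the stable-vertex contributions as twisted spin correlators. One small misattribution: Corollary~\ref{cor:twistedid} and Theorem~\ref{thm:wallcrossing} are not needed here. Condition (C3) follows from the definition of $\hat\cL^{P_k,W}_{c_k}$ and Remark~\ref{rmk:glsmun}; the wall-crossing and cone-identification results are invoked later, in the proof of Theorem~\ref{maintheorem}, not in the proof of Theorem~\ref{thm:conechar}.

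The ``if'' direction has a genuine gap at the assembly step. You write that condition (C3) supplies germs $\bt_k(z)\in\V^{P_k,+}_{c_k}[[x]]$ which ``assemble into a single $\bt(z)\in\V_\T^{X,+}[[x]]$''. But the local germ parameter that (C3) delivers (called $\hat\bt_k(z)$ in the paper) is a genuine power series in $z$ that includes the type-A recursive residue terms $\frac{Q^\beta\RC_{k,k'}^{m,m'}(\beta)}{z-\alpha_{k,k'}^\beta}(\cdots)$ expanded at $z=0$. Such power series are allowed in $\V^{P_k,+}_{c_k}$ precisely because $\V^{P_k}_{c_k}$ permits Laurent series in $z$; they are \emph{not} the restrictions to the fixed-point basis of any element of $\V_\T^{X,+}=\H[z][[Q^{-\frac1d}]](\alpha)$, which requires polynomiality in $z$. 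To get a global $\bt(z)$ you must first strip these residue tails from $\hat\bt_k(z)$, which is exactly what the paper does by writing $\hat\bff_k=\bt_k(z)+\sum\frac{Q^\beta\RC(\beta)}{z-\alpha^\beta_{k,k'}}(\cdots)+O(z^{-1})$ and declaring $\bt_k(z)$ to be only the polynomial part. Your rigidity step also needs sharpening: showing that $\bff-\bff'$ is regular away from $z=\infty$ is not enough to conclude it vanishes. The paper's lexicographic induction on the degree in $(x,Q^{-1})$ closes this, by alternating two sub-steps per degree: (C2) determines the pole residues (using $\beta<0$ to drop the $Q$-degree), and then (C3) separately determines the $O(z^{-1})$ part from the already-known $\geq 0$ part via the germ structure of $\hat\cL^{P_k,W}_{c_k}$. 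You acknowledge the rigidity step is the obstacle; the specific missing ingredient is this second sub-step, which you do not account for.
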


\begin{proof}
The proof follows that of Theorem 41 in \cite{CCIT2}.

Let $\bff$ be a $\tilde\Lambda_{\nov}^{\T}[[x]]$-valued point of $\hat\cL_{\T}^{X,W}$. We first verify that $\bff$ satisfies (C1) -- (C3). By definition, we can write $\bff$ as a formal series
\begin{equation}\label{eq:genlag}
I_{\T}^{X,W}(Q,-z) + \bt(z) +\sum_{n,\beta,\mu} \frac{Q^{\beta}}{n!}\left\langle \bt(\psi)^{n}\cdot \frac{\Phi_\mu}{-z-\psi}\right\rangle^{X,W,\T}_{0,n+1,\beta} \Phi^\mu,
\end{equation}
where $\bt(z)\in\V_{\T}^{X+}[[x]]$ satisfies $\bt(z)|_{x=0}=0$. The restriction $\bff_{k,m}$ can thus be written
\begin{equation}\label{eq:restrict}
I_{c_k}^{P_k,W}(Q,-z)_m+\bt_{k,m}(z)+\sum_{n,\beta} \frac{Q^{\beta}}{n!}\left\langle \bt(\psi)^n\cdot \frac{ (\one_{(m)}^k)^{\vee}}{-z-\psi}\right\rangle^{X,W,\T}_{0,n+1,\beta}\one_{(m)}^k,
\end{equation}
where $I_{c_k}^{P_k,W}(Q,-z)_m$ is the coefficient of $\one^k_{(m)}$ in the twisted $I$-function and the dual is taken with respect to the pairing $(-,-)_{c_k}^{P_k}$:
\begin{equation}
(\one_{(m)}^k)^{\vee}=d_ke_T(N_k^m)\one^k_{(-m)}.
\end{equation}
By Remark \ref{twistedI}, the initial term in \eqref{eq:restrict} is equal to 
\begin{equation}\label{eq:localI}
I_{c_k}^{P_k,W}(Q,-z)_m=z\sum_{l\in \Z_{>0}}Q^{-m-l}\frac{\prod_i\prod_{0\leq b<(m+l)w_i \atop \langle b \rangle = \langle mw_i \rangle}(bz-w_i\alpha_k)}{\prod_{j}\prod_{0<b<(m+l)d_j \atop \langle b \rangle = \langle md_j \rangle}(-bz+d_j(\alpha_k-\alpha_j))}
\end{equation}

The correlators in \eqref{eq:restrict} can be computed via the localization procedure outlined in Section \ref{localization}. Each localization graph has a distinguished vertex $v$ corresponding to the component carrying the last marked point, and the graphs subdivide into two types:
\begin{enumerate}
\item[A:] Graphs for which $v$ is unstable, i.e. $\val(v)=2$ and $\beta_v=0$;
\item[B:] Graphs for which $v$ is stable, i.e. $\val(v)>2$ or $\beta_v<0$.
\end{enumerate}

We now verify condition (C1).  It is clear from \eqref{eq:localI} that the initial term $I_{c_k}^{P_k,W}(Q,-z)_m$ lies in $\C(z,\alpha)((Q^{-\frac{1}{d}}))$ and it has poles at $z=0$, $z=\infty$, and $z=\frac{d_{k'}(\alpha_k-\alpha_{k'})}{b}$ where 
\[
b=md_{k'}+c>0
\]
for some integer $c$. Setting $\beta=-b/d_{k'}$ and $m'=\langle c/d_{k'}\rangle$, we see that these poles coincide with $\alpha_{k,k'}^\beta$ for $\beta\in E_{k,k'}^{m,m'}$.

Now consider the sum in \eqref{eq:restrict}. It follows from the virtual localization formula that this term lies in $\C(z,\alpha)((Q^{-\frac{1}{d}}))[[x]]$. Moreover, we saw in Section \ref{localization} that contributions from graphs of type A have the prescribed poles at $z=\alpha_{k,k'}^\beta$ due to the specialization of $\psi_{n+1}$ at the unstable vertices, while contributions from graphs of type B are polynomial in $z^{-1}$ because $\psi$ is nilpotent at the stable vertices. These observations prove (C1).

Next, we verify condition (C2). As before, we begin with the initial term $I_{c_k}^{P_k,W}(Q,-z)_m$. We compute directly that the residue of $I_{c_k}^{P_k,W}(Q,-z)_m$ at $z=\alpha_{k,k'}^\beta$ is equal to
\begin{equation}\label{Ires}
\frac{\alpha_{k'}-\alpha_k}{d_{k'}\beta^2}\hspace{-.2cm}\sum_{l\in\Z_{>0} \atop m+l>-\beta}Q^{-m-l}\frac{\prod_i\prod_{0\leq b < (m+l)w_i \atop \langle b\rangle = \langle mw_i \rangle}\left(\frac{b}{\beta}(\alpha_{k'}-\alpha_k)-w_i\alpha_k\right)}{\prod_j\prod_{0<b<(m+l)d_j \atop \langle b \rangle = \langle md_j\rangle}'\left(\frac{b}{\beta}(\alpha_k-\alpha_{k'})+d_j(\alpha_k-\alpha_j)\right)},
\end{equation}
and the evaluation of $I_{c_{k'}}^{P_{k'},W}(Q,-z)_{-m'}$ at $z=\alpha_{k,k'}^\beta$ is
\begin{equation}\label{Ieval}
\frac{\alpha_{k'}-\alpha_k}{\beta}\sum_{l\in\Z_{>0}}Q^{-\langle-m'\rangle-l}\frac{\prod_i\prod_{0\leq b<(\langle-m'\rangle+l)w_i \atop \langle b \rangle = \langle -m'w_i \rangle}(\frac{b}{\beta}(\alpha_{k'}-\alpha_k)-w_i\alpha_{k'})}{\prod_{j}\prod_{0<b<(\langle-m'\rangle+l)d_j \atop \langle b \rangle = \langle -m'd_j \rangle}(\frac{b}{\beta}(\alpha_k-\alpha_{k'})+d_j(\alpha_{k'}-\alpha_j))}.
\end{equation}
By shifting the index $b$ in \eqref{Ieval} so that the products start at $-\beta w_i$ in the numerator and  $-\beta d_j$ in the denominator, it is straightforward to verify that
\[
\eqref{Ires}=Q^\beta\;\RC_{k,k'}^{m,m'}(\beta)\cdot \eqref{Ieval}.
\]

Now we verify condition (C2) for the sum of correlators in \eqref{eq:restrict}. Let $\Gamma$ be a graph of type A. Then there is a unique edge $e$ adjacent to the (unstable) distinguished vertex $v$, and it meets the rest of the graph at a vertex $v'$. The contribution of $\Gamma$ to the particular correlator 
\[
\left\langle \bt^{n}(\psi)\; \frac{ (\one_{(m)}^k)^{\vee}}{-z-\psi}\right\rangle^{X,W,\T}_{0,n+1,\beta}
\]
can be written as
\[
\Contr^W_{\Gamma}=d_ke_T(N_k^m)\Contr^W_\Gamma(e)\;\Contr^W_{\Gamma'},
\]
where $\Gamma'$ is the graph obtained from $\Gamma$ by omitting the edge $e$ and $\Contr_{\Gamma'}$ is the contribution of $\Gamma'$ to the correlator
\[
\left\langle \bt^{n}(\psi)\cdot \frac{ (\one_{(m')}^{k'})^{\vee}}{-z-\psi}\right\rangle^{X,W,\T}_{0,n+1,\beta-\beta_e}.
\]
Since $\RC_{k,k'}^{m,m'}(\beta_e)=d_ke_T(N_k^m)\Contr^W_\Gamma(e)$, condition (C2) follows by fixing $e$ and summing over all possible $\Gamma'$.

Lastly, we verify (C3). Define
\[
\hat\bff_{k,m}:=\bff_{k,m}-I_{c_k}^{P_k,W}(Q,-z)_m
\]
and set
\[
\hat\bt_{k,m}(z):=\bt_{k,m}(z)+\sum_{k',m' \atop \beta\in E_{k,k'}^{m,m'}}\frac{Q^\beta \RC_{k,k'}^{m,m'}(\beta)}{z-\alpha_{k,k'}^\beta}\left(\hat\bff_{k',-m'}|_{z=\alpha_{k,k'}^\beta}\right),
\]
viewed as a power series at $z=0$. Notice that $\hat\bt_{k,m}(z)|_{x=0}=0$. From (C2), the second term is equal to the contribution from type-A graphs to the sum of correlators in \eqref{eq:restrict}. We now consider type-B graphs. By integrating over all moduli spaces except the one corresponding to the distinguished vertex, we compute that the contribution from all type-B graphs to the sum in \eqref{eq:restrict} is equal to
\[
\sum_{n,\beta} \frac{Q^{\beta}}{n!}\left\langle \hat\bt_{k,m}(\psi)^n\cdot \frac{(\one_{(m)}^k)^{\vee}}{-z-\psi}\right\rangle^{P_k,W,\epsilon=0,c_k}_{0,n+1,\beta}.
\]
Adding the type-A and type-B contributions and summing over $m$, we conclude that
\[
\bff_{k}=I_{c_k}^{P_k,W}(Q,-z)+\hat\bt_{k}(z)+\sum_{n,\beta,m} \frac{Q^{\beta}}{n!}\left\langle \hat\bt_{k}(\psi)^n\cdot \frac{\one_{(m)}^k}{-z-\psi}\right\rangle^{P_k,W,\epsilon=0,c_k}_{0,n+1,\beta}\hspace{-.4cm}(\one_{(m)}^k)^{\vee},
\]
which is a $\tilde\Lambda_{\nov}^{\T}[[x]]$-valued point of $\hat\cL^{P_k,W}_{c_k}$. This proves (C3).

To prove the reverse implication, assume $\bff$ satisfies (C1) -- (C3). As before, write 
\[
\hat\bff:=\bff-I_\T^{X,W}(Q,-z).
\]
Since both $\bff$ and $I_\T^{X,W}(Q,-z)$ satisfy conditions (C1) and (C2), so does $\hat\bff$, and we can write
\begin{equation}\label{eq:recurse}
\hat\bff_k=\bt_{k}(z)+\sum_{m,k',m' \atop \beta\in E_{k,k'}^{m,m'}}\frac{Q^\beta \RC_{k,k'}^{m,m'}(\beta)}{z-\alpha_{k,k'}^\beta}\left(\hat\bff_{k',-m'}|_{z=\alpha_{k,k'}^\beta}\right)\one_{(m)}^k
+O(z^{-1})
\end{equation}
for some $\bt_k(z)\in\V_{c_k}^{P_k,+}[[x]]$. Moreover, by condition (C3), we know that $\hat\bff|_{x=0}=0$. Choose $\bt(z)\in\V_{\T}^{X,+}[[x]]$ to be the unique element which restricts to $\bt_k(z)$ for all $k$. Then $\bff$ and the series \eqref{eq:genlag} both satisfy conditions (C1) -- (C3) and they give rise to the same restrictions $\bt_k(z)$. Therefore, it suffices to prove that (C1) -- (C3) uniquely determine $\bff$ from the collection $\bt_k(z)$. To justify this last claim, we proceed by lexicographic induction on the degree in $(x,Q^{-1})$.

Suppose we know the $x^{\alpha}Q^{\beta}$-coefficient of $\bff$ whenever $(\alpha, -\beta) <_{\text{lex}} (\mu, -\nu)$.  Since $\beta<0$ in the recursive term of \eqref{eq:recurse}, we can inductively determine the $x^{\mu}Q^{\nu}$-coefficient up to the $O(z^{-1})$ part. To determine this principal part, we use the fact that $\bff_k$ lies on $\hat\cL^{P_k,W}_{c_k}$, so we can write it as
\begin{equation}\label{twisteroo}
I_{c_k}^{P_k,W}(Q,-z) + \hat\bt_k(z) +\sum_{n,\beta,m} \frac{Q^\beta}{n!}\left\langle \hat\bt_k(\psi)^n\; \frac{\one_{(m)}^k}{-z-\psi}\right\rangle^{P_k,W,\epsilon=0,c_k}_{0,n+1,\beta}\left(\one_{(m)}^k\right)^\vee,
\end{equation}
in which
\[
\hat\bt_{k}(z):=\bt_{k}(z)+\sum_{m,k',m' \atop \beta\in E_{k,k'}^{m,m'}}\frac{Q^\beta \RC_{k,k'}^{m,m'}(\beta)}{z-\alpha_{k,k'}^\beta}\left(\hat\bff_{k',-m'}|_{z=\alpha_{k,k'}^\beta}\right)\one_{(m)}^k.
\]
Notice that the correlators in \eqref{twisteroo} capture the $O(z^{-1})$ part of $\hat\bff_k$ and they are nonzero only if $n>1$ or $n=1$ and $\beta<0$. Therefore, the inductive step also allows us to solve for the $O(z^{-1})$ part.
\end{proof}

\subsection{Proof of Theorem \ref{maintheorem}}

We now collect the results from the previous sections to prove Theorem \ref{maintheorem}--- that is, we prove that $\hat\cL_{\T}^{X,W}$ is a formal germ of the GW Lagrangian cone $\cL_{\T}^X$.

\begin{proof}[Proof of Theorem \ref{maintheorem}]

Since $\cL_{\T}^X$ is spanned by the derivatives of the $J$-function as in \eqref{eq:ruling}, we simply need to show that every point in the formal subspace $\hat\cL_{\T}^{X,W}$ can be written as a linear combination of derivatives of $J_\T^X(\tau,-z)$. By definition, a point of $\hat\cL_{\T}^{X,W}$ can be written as
\[
\bff=I_\T^{X,W}(Q,-z)+\bt(z) +\sum_{n,\beta,\mu} \frac{Q^\beta}{n!}\left\langle \bt^{n}(\psi)\; \frac{\Phi_\mu}{-z-\psi}\right\rangle^{X,W,\T}_{0,n+1,\beta} \Phi^\mu.
\]
We can find an element of $\cL_\T^X$ that matches $\bff$ modulo $\V^{X,-}_\T$, since the GW Lagrangian cone $\cL_\T^X$ is a graph over $\V^{X,+}_\T$.  Written in terms of derivatives of the $J$-function, this means there is a unique
\[
\bfg={\hat\bt(z)}+\sum_{n,\beta,\mu}\frac{Q^\beta}{n!}\left\langle {\hat\bt(z)}\;\tau^n\;\frac{\Phi_\mu}{-z-\psi} \right\rangle_{0,n+2,\beta}^{X,\T}\Phi^\mu\in \cL_\T^X
\]
such that $\bff=\bfg \mod z^{-1}$. Here, both ${\hat\bt(z)}\in z\V_\T^{X,+}[[t]]$ and $\tau$ depend formally on $\bt(z)$. To prove that $\bff\in\cL_\T^X$, we must prove that $\bff=\bfg$. Since points of $\hat\cL_{\T}^{X,W}$ are uniquely determined by their projection to $\V_{\T}^{X,+}$, we can prove that $\bff=\bfg$ by showing that $\bfg$ is a $\tilde\Lambda_{\nov}^{\T}[[t]]$-valued point of $\hat\cL_{\T}^{X,W}$. We accomplish this by verifying conditions (C1) -- (C3) of Theorem \ref{thm:conechar}.

First, notice that $(\bff|_{t=0})|_{Q=\infty}=0$ implies that the same is true of $\bfg$. Now consider the restriction of $\bfg$ to the span of $\one_{(m)}^k$:
\[
\bfg_{k,m}=\hat\bt_{k,m}(z)+\sum_{n,\beta}\frac{Q^\beta}{n!}\left\langle \hat\bt(z)\;\tau^n\;\frac{\left(\one_{(m)}^k \right)^\vee}{-z-\psi}\right\rangle_{0,n+2,\beta}^{X,\T}.
\]
By the virtual localization formula in GW theory, each correlator in $\bfg_{k,m}$ can be computed as a sum over contributions from localization graphs. Each localization graph has a distinguished vertex $v$ supporting the last marked point and the graphs split into two types:
\begin{enumerate}
\item[A:] Graphs for which $\val(v)=2$ and $\beta_v=0$;
\item[B:] Graphs for which $\val(v)>2$ or $\beta_v<0$.
\end{enumerate}

As in the proof of Theorem \ref{thm:conechar}, the contributions from type-A graphs have poles at $z=\alpha_{k,k'}^\beta$ while the contributions from type-B graphs have poles at $z=0$. This proves (C1). Using the fact that $\Contr_\Gamma(e)=\Contr_\Gamma^W(e)$, the same analysis used in the proof of Theorem \ref{thm:conechar} proves that $\bfg_{k,m}$ satisfies the recursions described by condition (C2).

It is left to prove (C3)--- that is, that $\bfg_{k}$ is a $\tilde\Lambda_{\nov}^{\T}[[x]]$-valued point of $\hat\cL_{c_k}^{P_k,W}$. To do this, consider the series
\[
\tilde\bfg:=-\one z+\tau+\sum_{n,\beta,\mu}\frac{Q^\beta}{n!}\left\langle \tau^n\;\frac{\Phi_\mu}{-z-\psi}\right\rangle_{0,n+1,\beta}^{X,\T}\Phi^\mu.
\]
We can write
\[
\tilde\bfg_k=-\one z+\tilde\tau_k(z)+\sum_{n,\beta,m}\frac{Q^\beta}{n!}\left\langle \tilde\tau_k(\psi)^n\;\frac{\one_{(m)}^k}{-z-\psi}\right\rangle_{0,n+1,\beta}^{P_k,c_k}\left(\one_{(m)}^k\right)^\vee,
\]
where 
\[
\tilde\tau_k(z):=\tau_{k}+\sum_{\Gamma \text{ of Type A }}\Contr_\Gamma,
\]
viewed as a power series in $z$.  From this, we see that $\tilde\bfg_k$ is an element of $\cL_{c_k}^{P_k}$.  Let $\partial_{\hat\bt(z)}$ be the differential operator defined by replacing $\Phi_{\mu}$ by $\frac{\d}{\d \tau^{\mu}}$ in the definition of $\hat\bt(z)$, so that $\partial_{\hat\bt(z)} \tau= \hat\bt(z)$.  Then
\[
\bfg_k=\partial_{\hat\bt(z)}\tilde\bfg_k.
\]
Since $\cL_{c_k}^{P_k}$ is an over-ruled cone containing $\tilde\bfg_k$, it follows that $\bfg_k \in\cL_{c_k}^{P_k}$. 

Applying the twisted cone correspondence from Corollary \ref{cor:twisted}, we conclude that $\bfg_k\in\cL_{c_k}^{P_k,W}$. Moreover, since 
\[
\bfg_k|_{x=0} = \bff_k|_{x=0} = I_{c_k}^{P_k,W}(Q,-z) \mod z^{-1}
\]
and points on the twisted cone are determined by their regular part in $z$, we have $\bfg_k|_{x=0}=I_{c_k}^{P_k,W}(Q,-z)$. This implies that $\bfg_k$ is a  $\tilde\Lambda_{\nov}^{\T}[[x]]$-valued point of $\cL_{c_k}^{P_k,W}$ and finishes the proof.
\end{proof}

\section{Phase transitions}
\label{transitions}

In this last section, we use Theorem \ref{maintheorem}, along with previously-known results concerning the crepant transformation conjecture and quantum Serre duality, to deduce a correspondence between the gauged linear sigma models that arise at different phases of the GIT quotient.  In other words (recalling that the positive phase of the GLSM gives the GW theory of the complete intersection $Z$ cut out by the polynomials $F_j$), we identify the genus-zero GLSM of $(X_-,W)$ with the GW theory of $Z$. 

Throughout this section, we assume \textbf{(A1)}, \textbf{(A2)}, and the Calabi--Yau condition:
\[
\sum_{i=1}^M w_i=\sum_{j=1}^N d_j.
\]
We expect the results to extend to the non-Calabi--Yau case, following arguments developed by Acosta \cite{Acosta} and Acosta--Shoemaker \cite{AcostaShoemaker}.

\subsection{Notation}

Recall that
\[
X_-=X:=\bigoplus_{i=1}^M\O_{\P(\vec d)}(-w_i),
\]
\[
X_+:=\bigoplus_{j=1}^N\O_{\P(\vec w)}(-d_j),
\]
and $Z$ is the complete intersection 
\[
Z:=Z(F_1,\dots,F_N)\subset\P(\vec w)\subset X_+.
\]

We have $H_{CR}^*(X_-)=H_{CR}^*(\P(\vec d))$ and  $H_{CR}^*(X_+)=H_{CR}^*(\P(\vec w))$. It is a standard fact that
\[\text{rank} \big(H^*_{CR}(\P(\vec d))\big) = \sum d_j,\]
so the Calabi--Yau condition implies that there is a vector space isomorphism
\begin{equation}\label{eq:vs1}
H_{CR}^*(X_-)\cong H_{CR}^*(X_+).
\end{equation}
We simultaneously choose bases for $H_{CR}^*(X_\pm)$ by declaring \[H_{(m)}^l:=e\left(\O_{X_{(m)}}(l)\right)\] regardless of the GIT phase.

These are not, strictly speaking, the state spaces of the GLSM in the two phases; recall, the GLSM state space is defined as $H^*_{CR}(X_{\pm}, W^{+\infty})$, where $W^{+\infty}$ is a Milnor fiber.  Nevertheless, as we have seen, $H^*_{CR}(X_-)$ contains the narrow part of the state space in the negative phase,
\[\H^W\subset H_{CR}^*(X_-),\]
which is generated by $H_{(m)}^l$ with $m\in\mathrm{nar}$.  Analogously, the GLSM state space in the positive phase is isomorphic to $H^*_{CR}(Z)$ and contains the ambient part
\[
\H^Z\subset H_{CR}^*(Z),
\] 
which is defined as the image of $i^*:H^*_{CR}(X_+) \rightarrow H_{CR}^*(Z)$, where $i:Z\rightarrow X_+$ is the inclusion. It follows from assumption \textbf{(A2)} that the vector space isomorphism \eqref{eq:vs1} induces a vector space isomorphism\footnote{We thank Pedro Acosta for pointing out the necessity of assumption (\textbf{A2}) in this isomorphism.} 
\begin{equation}\label{eq:vs2}
\H^W\cong \H^Z.
\end{equation}
This is a special case of the state space isomorphism proved by Chiodo--Nagel \cite{CN}.

\subsection{Crepant transformation conjecture}

The crepant transformation conjecture identifies the GW theory of two targets related by a crepant birational transformation (see, for example, \cite{CoatesRuan}). Recently, Coates--Iritani--Jiang proved the crepant transformation conjecture for a large class of toric targets \cite{CIJ}. Their results include, as a special case, the phase transition between the GW theories of $X_-$ and $X_+$: 

\begin{theorem}[\cite{CIJ}, Theorem 6.1]\label{thm:ctc}
Let $\cL_\T^{X_\pm}\subset\V_\T^{X_\pm}$ be the Lagrangian cones associated to the $\T$-equivariant GW theory of $X_\pm$. There exists a $\C(\alpha,z)$-linear symplectomorphism $\bU_\T:\V_\T^{X_-}\rightarrow \V_\T^{X_+}$ such that
\begin{enumerate}
\item $\bU_\T$ matches Lagrangian cones after substituting $Q=1$ and analytic continuation:
\[
\bU_\T(\cL_\T^{X_-})=\cL_\T^{X_+};
\] 
\item $\bU_\T$ is induced by a Fourier--Mukai transformation 
\[
\FM:K_\T^0(X_-)\rightarrow K_\T^0(X_+)
\]
via a diagram of the form
\[
\xymatrixcolsep{4pc}
\xymatrix{
	K_\T^0(X_-) \ar[r]^{\FM} \ar[d]^{\tilde{\Psi}_-} & K_\T^0(X_+)\ar[d]^{\tilde{\Psi}_+}\\
	\V_\T^{X_-} \ar[r]^{\bU_\T} & \V_\T^{X_-}.
	}
	\]
\end{enumerate}
\end{theorem}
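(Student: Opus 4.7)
The plan is to follow the general strategy that Coates--Iritani--Jiang use for toric crepant transformations, specialized to the two GIT phases of $[V \GIT G]$ at hand. The starting point is the Coates--Corti--Iritani--Tseng toric mirror theorem \cite{CCIT2}, which provides explicit $I$-functions $I_{\T}^{X_\pm}(Q_\pm, z)$ in each phase. By the mirror theorem, $I_\T^{X_-}$ and $I_\T^{X_+}$ lie on the Lagrangian cones $\cL_\T^{X_-}$ and $\cL_\T^{X_+}$ respectively, and by Birkhoff factorization (together with the over-ruled structure in Theorem \ref{thm:givental}) each of these $I$-functions determines all of $\cL_\T^{X_\pm}$ in a transverse slice. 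Thus the whole problem reduces to matching the two $I$-functions after analytic continuation.

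Next, I would construct a global K\"ahler moduli space by taking the toric secondary fan associated to the GIT data $(V, G, \theta)$. This global moduli space has two distinguished large-radius limit points, one for each phase, and the local K\"ahler parameters $Q_\pm$ are analytic coordinates near those points. I would then produce a Mellin--Barnes integral representation of $I_\T^{X_-}$ whose integrand is built from the products of $\Gamma$-factors corresponding to the weights $(w_1,\dots,w_M,-d_1,\dots,-d_N)$. Closing the contour on the opposite side of the poles and applying the residue theorem reshuffles the hypergeometric series into a $\C(\alpha,z)$-linear combination of the components of $I_\T^{X_+}$. This defines the candidate symplectomorphism $\bU_\T:\V_\T^{X_-}\to\V_\T^{X_+}$, and by construction it matches $I$-functions after specializing $Q=1$ and analytically continuing; extending it $\C(\alpha,z)$-linearly and then pulling back through the transverse slice structure gives $\bU_\T(\cL_\T^{X_-})=\cL_\T^{X_+}$.

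To obtain part (ii), I would use Iritani's $\widehat\Gamma$-integral structure. For each $X_\pm$ there is a map $\tilde\Psi_\pm: K_\T^0(X_\pm) \to \V_\T^{X_\pm}$ sending a K-theory class $E$ to a cohomology-valued expression built from $\widehat\Gamma(TX_\pm)$, $\ch(E)$, and $(2\pi\ri)^{\deg/2}$, whose image lies on the Lagrangian cone. The coefficients appearing in the Mellin--Barnes residues (essentially $\Gamma$-factors and sine-function quotients) are exactly what converts $\widehat\Gamma(TX_-)$ into $\widehat\Gamma(TX_+)$ after matching the tangent bundles of $X_-$ and $X_+$ through the common Artin stack $[V/G]$. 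A Fourier--Mukai kernel on $X_- \times X_+$ (most naturally the structure sheaf of the fiber product over $[V/G]$, possibly twisted by a line bundle to absorb the difference between the canonical bundles at the level of the GIT construction) induces the map $\FM: K_\T^0(X_-)\to K_\T^0(X_+)$, and verifying that $\tilde\Psi_+\circ \FM = \bU_\T\circ \tilde\Psi_-$ on generators of K-theory completes part (ii).

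The main obstacle is the analytic continuation step: one must identify the correct contour deformation in the Mellin--Barnes integral so that the resulting series converges near the opposite cusp, and one must be careful that the Calabi--Yau condition $\sum w_i = \sum d_j$ together with assumptions \textbf{(A1)}, \textbf{(A2)} ensure that no spurious residues or monodromy obstructions appear. The bookkeeping of twisted sectors is delicate because the inertia decompositions of $X_-$ and $X_+$ are different in general; controlling how the residues distribute across twisted sectors and match the state space isomorphism \eqref{eq:vs1} is where the combinatorics becomes most intricate, and is precisely the content of the toric CTC computation of \cite{CIJ}.
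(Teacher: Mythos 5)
The paper does not prove this statement: it is cited verbatim from Coates--Iritani--Jiang \cite{CIJ}, Theorem 6.1, and what follows the statement in the text is only an unwinding of the formula for $\bU_\T$ (via $\tilde\Psi_\pm$, the Vandermonde inversion of the Chern-character matrix, and Lemma \ref{lem:vanish}) that is then used in Lemma \ref{lastlemma}. So there is no ``paper's own proof'' to compare against. That said, your outline is a faithful high-level summary of the strategy \cite{CIJ} actually follow: toric mirror theorem of \cite{CCIT2} to produce $I$-functions, the secondary fan to build a global K\"ahler moduli space with two large-radius cusps, Mellin--Barnes regularization to analytically continue one $I$-function into a $\C(\alpha,z)$-linear combination of the components of the other, and Iritani's $\widehat\Gamma$-integral structure to package the resulting change-of-basis matrix as $\tilde\Psi_+\circ\FM\circ\tilde\Psi_-^{-1}$ for a Fourier--Mukai kernel supported on the fiber product of $X_-$ and $X_+$ over the common toric Artin stack.

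One caveat worth flagging: assumptions \textbf{(A1)} and \textbf{(A2)} play no role in the CTC of \cite{CIJ}. They are imposed in this paper only later, to guarantee that $\phi_\T^+\circ\bU_\T$ restricted to the narrow subspace $\V^{X_-,W}$ and the pre-narrow slice $\cL^{X_-,W}_{\T,\mathrm{pre}}$ has a well-defined non-equivariant limit (Lemma \ref{lastlemma}) and that the state spaces match (\eqref{eq:vs2}). The only condition needed for the Mellin--Barnes continuation and for crepancy of the birational transformation $X_-\dashrightarrow X_+$ is the Calabi--Yau condition $\sum w_i = \sum d_j$; invoking \textbf{(A1)}, \textbf{(A2)} in the residue analysis would be a misattribution. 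Otherwise your sketch accurately captures what \cite{CIJ} do, and you are correct that the delicate part is the bookkeeping of twisted sectors under the residue-redistribution, which is precisely where the explicit $y_{k,m,l}$-computation in the paper's post-theorem discussion is used.
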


It will be useful in what follows to unravel Theorem \ref{thm:ctc}. In particular, we describe how $\bU_\T$ is induced from the Fourier--Mukai transformation. To give such a description, we recall from \cite{CIJ} that the maps $\tilde\Psi_{\pm}$ appearing in Theorem \ref{thm:ctc} are defined by 
\[
\tilde\Psi_{\pm}(E)=z^{-\mu^{\pm}}z^{\rho^{\pm}}\left(\hat\Gamma_{X_{\pm}}\cup(2\pi\ri)^{\frac{\deg_0}{2}}\mathrm{inv}^*\ch_\T(E)\right)
\]
and $\bU_\T=\tilde\Psi_{+}\circ\FM\circ\tilde\Psi_{-}^{-1}$. Rather than recalling all of the notation from \cite{CIJ}, we content ourselves with observing that we can write
\[
\bU_\T=\Gamma_+\circ\overline{\bU}_\T\circ\Gamma_{-}^{-1},
\]
where
\[
\overline\bU_{\T}=\ch_\T\circ\FM\circ\ch_\T^{-1}
\]
and the maps $\Gamma_{\pm}$ act diagonally on the sectors of the inertia stack and have well-defined and invertible non-equivariant limits. As a consequence of this structure, everything we need to prove about $\bU_\T$ can be proved by understanding $\overline\bU_\T$.

To describe $\overline \bU_\T$ explicitly, we first note that generators for $K_0^G(\C^{M+N})$ are given by the line bundles $L_\rho$, which are geometrically trivial and have $G$-linearization of weight $\rho$. Each such line bundle induces line bundles $L_\rho^+$ and $L_\rho^-$ on $X_+$ and $X_-$, respectively, and the Fourier-Mukai morphism is defined by by $\FM(L_\rho^-)=L_\rho^+$. We compute
\[
\ch_\T(L_\rho^+)=\sum_m\re^{2\pi\ri\rho m}\re^{\rho H}\one_{(m)}
\]
and
\[
\ch_\T(L_\rho^-)=\sum_{m,k}\re^{-2\pi\ri\rho m}\re^{\rho\alpha_k}\one_{(m)}^k,
\]
where we have chosen to write the Chern characters on $X_-$ in terms of the localized basis. Allowing $\rho$ to vary between $0$ and $D-1$, where 
\[
D:=\mathrm{rk}(H_{CR}^*(X_-))=\sum_j d_j=\sum_i w_i,
\] 
we see that the map $\ch_\T(L_\rho^-)$ is given by a Vandermonde matrix:
\[
\left(\ch_\T(L_\rho^-)\right)_{\rho}=\left(x_{k,m}^\rho\right)_{\rho}^{k,m}\left(\one_{(m)}^k\right)_{k,m}.
\]
Here, $x_{k,m}:=\re^{-2\pi\ri m+\alpha_k}$, and the lower and upper indices denote rows and columns of a matrix, respectively.

Inverting the Vandermonde matrix, we compute that
\[
\left(\one_{(m)}^k\right)_{k,m}=\left((-1)^{D-\rho-1}\frac{\sum_{S}\prod_{(k',m')\in S} x_{k',m'}}{\prod_{(k',m')\neq(k,m)}(x_{k,m}-x_{k',m'})} \right)_{k,m}^{\rho}\left(\ch_\T(L_\rho^-)\right)_{\rho},
\]
where the sum in the numerator is over all sets of pairs $(k_i, m_i) \neq (k.m)$ of size $D-\rho-1$:
\[
S=\left\{(k_1,m_1),\dots,(k_{D-\rho-1},m_{D-\rho-1})\;|\;(k_i,m_i)\neq(k,m)\right\}.
\]
In particular, we compute:
\begin{align}\label{chern}
\nonumber \overline\bU_\T(\one_{(m)}^k)&=\sum_{0\leq \rho<D \atop l}(-1)^{D-\rho-1}\frac{\sum_{S}\prod_{(k',m')\in S} x_{k',m'}}{\prod_{(k',m')\neq(k,m)}(x_{k,m}-x_{k',m'})}\re^{2\pi\ri\rho l}\re^{\rho H}\one_{(l)}\\
&=\sum_{0\leq \rho<D \atop l}(-1)^{D-\rho-1}\frac{\sum_{S}\prod_{(k',m')\in S} y_{k',m',l}}{\prod_{(k',m')\neq(k,m)}(y_{k,m,l}-y_{k',m',l})}\one_{(l)},
\end{align}
where $y_{k,m,l}:=x_{k,m}\re^{-2\pi\ri l-H}=\re^{-2\pi\ri (m+l)+\alpha_k-H}$. 

The main structural results that we need concerning $\overline\bU_\T$ are contained in the following lemma.

\begin{lemma}\label{lem:vanish}
The $\one_{(l)}$-coefficient of $\overline\bU_\T(\one_{(m)}^k)$ satisfies the following properties:
\begin{enumerate}[(i)]
\item it vanishes at $H=\alpha_j$ whenever $l\neq -m$ or $j\neq k$, and
\item it has simple poles at $\alpha_j=\alpha_k$ for $j\neq k$.
\end{enumerate}
\end{lemma}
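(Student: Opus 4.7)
The plan is to identify the $\one_{(l)}$-coefficient appearing in \eqref{chern} as a single Lagrange interpolation polynomial, from which both properties follow by reading off factors.

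First, I would simplify the expression. Setting $x_{k',m'}:=\re^{-2\pi\ri m'+\alpha_{k'}}$ and $x:=\re^{2\pi\ri l+H}$, one observes that $y_{k',m',l}=x^{-1}x_{k',m'}$, so pulling out the common factor of $x^{-1}$ from each $y$ turns the sum $\sum_{S}\prod_{(k',m')\in S}y_{k',m',l}$ into $x^{-(D-\rho-1)}e_{D-\rho-1}$, where $e_r$ is the elementary symmetric polynomial in $\{x_{k',m'}:(k',m')\neq(k,m)\}$. Summing over $\rho$ and using the standard identity $\prod_{(k',m')\neq(k,m)}(x-x_{k',m'})=\sum_{\rho}(-1)^{D-\rho-1}e_{D-\rho-1}x^{\rho}$, the full sum in \eqref{chern} collapses to
\[
P_{k,m}(x):=\prod_{(k',m')\neq(k,m)}\frac{x-x_{k',m'}}{x_{k,m}-x_{k',m'}},
\]
the unique degree-$(D-1)$ Lagrange polynomial taking value $1$ at $x_{k,m}$ and $0$ at every other fixed-point basis node.

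For part (i), I would evaluate $P_{k,m}(x)$ at $H=\alpha_j$, where $x$ specializes to $\re^{2\pi\ri l+\alpha_j}=x_{j,-l}$. Provided $(j,-l)$ is a valid fixed-point basis index of $X_-$ (i.e.~$ld_j\in\Z$), the numerator of $P_{k,m}$ contains the factor $(x-x_{j,-l})$. The hypothesis ``$l\neq -m$ or $j\neq k$'' is precisely $(j,-l)\neq(k,m)$, ensuring that this factor is not cancelled by the corresponding denominator factor, and it vanishes at the specialization, forcing $P_{k,m}(\re^{2\pi\ri l+\alpha_j})=0$. For part (ii), I would inspect the denominator of $P_{k,m}$ at $\alpha_j=\alpha_k$ with $j\neq k$. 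When $(j,m)$ is a valid basis index, the denominator contains $x_{k,m}-x_{j,m}=\re^{-2\pi\ri m}(\re^{\alpha_k}-\re^{\alpha_j})$, which has a simple zero at $\alpha_j=\alpha_k$. The matching numerator factor $(x-x_{j,m})$ is generically nonvanishing, and all other factors of $P_{k,m}$ remain regular and nonzero there, so exactly a simple pole is produced.

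The main obstacle I anticipate is carefully handling the edge cases where the ``partner'' node $(j,-l)$ for (i) --- or $(j,m)$ for (ii) --- fails to lie among the fixed-point basis indices, namely when $ld_j\notin\Z$ or $md_j\notin\Z$. Assumption \textbf{(A1)} implies that any sector $l$ with some $lw_i\in\Z$ automatically satisfies $ld_j\in\Z$ for every $j$, which covers the geometrically relevant contributions; the remaining corner cases (such as broad-type sectors whose multiplicities are compatible with only a subset of the $d_j$'s) should be disposed of by direct inspection, noting that the relevant factors either do not appear or continue to produce the asserted vanishing and simple-pole behaviour.
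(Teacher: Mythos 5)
Your proposal is correct and takes a genuinely different (and arguably more transparent) route from the paper. The paper never collapses the $\rho$-sum: for (i) it evaluates at $H=\alpha_j$, observes that $y_{j,-l,l}=1$ there, and pairs up the subsets $S$ and $S'=S\bigtriangleup\{(j,-l)\}$ so that oppositely-signed terms cancel; for (ii) it factors each denominator piece as $y_{k,m,l}-y_{k',m',l}=\re^{-2\pi\ri(m+l)+\alpha_k-H}(1-\re^{-2\pi\ri(m'-m)+\alpha_{k'}-\alpha_k})$ and spots the linear zero at $\alpha_{k'}=\alpha_k$ when $m'=m$. You instead pull out the common factor $x^{-1}=\re^{-2\pi\ri l-H}$ from each $y$, identify the resulting $\rho$-sum via $\sum_\rho(-1)^{D-\rho-1}e_{D-\rho-1}x^\rho=\prod(x-x_{k',m'})$, and thereby recognize the $\one_{(l)}$-coefficient as a closed-form Lagrange basis polynomial $P_{k,m}(x)$. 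Once this is in hand, (i) becomes ``evaluate the interpolant at another node'' and (ii) becomes ``read off the simple zero in the normalizing denominator,'' which is a cleaner unified picture than the paper's two separate local arguments. This identification is essentially the statement that $\ch_\T^{-1}$ on $X_-$ followed by the transfer $L_\rho^-\mapsto L_\rho^+$ followed by $\ch_\T$ on $X_+$ has matrix entries given by Lagrange polynomials, which is implicit in but never stated by the paper.

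Your flagged edge cases are also the right ones to worry about, and your resolution is sound. For (i), the lemma's content is only nontrivial when $\one_{(l)}$ is a nonzero class on $X_+$, i.e.\ $lw_{i_0}\in\Z$ for some $i_0$; by \textbf{(A1)}, $w_{i_0}\mid d_j$ then forces $ld_j\in\Z$ for \emph{every} $j$, so $(j,-l)$ is always a genuine node of the interpolation problem (and this matches the paper's tacit assumption that $(j,-l)$ is among the index pairs being summed over). For (ii), if $md_j\notin\Z$ then the factor $x_{k,m}-x_{j,m}$ simply does not appear in the denominator, and no other factor can vanish at $\alpha_j=\alpha_k$ (factors with $k'\neq j,k$ involve independent $\alpha$'s, and factors with $k'=k$ involve distinct roots of unity); so there is no pole at all, which is consistent with the lemma's intended meaning of ``poles of order at most one.'' This second point is something the paper also implicitly elides, so your proof is, if anything, slightly more careful.
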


\begin{proof}
We begin by proving the first assertion. Notice that, after the evaluation $H = \alpha_j$, we have $y_{j,-l,l}=1$.  This allows us to identify the (oppositely-signed) summands of \eqref{chern} indexed by $S$ and
\[
S':=\begin{cases}
S\cup\{(j,-l)\} &\text{ if } (j,-l)\notin S\\
S\setminus\{(j,-l)\} &\text{ if } (j,-l)\in S,
\end{cases}
\]
as long as $(j,-l)\neq (k,m)$.

To prove the second assertion, notice that
\[
y_{k,m,l}-y_{k',m',l}=\re^{-2\pi\ri(m+l)+\alpha_k-H}\left(1- \re^{-2\pi\ri(m'-m)+\alpha_{k'}-\alpha_k} \right),
\]
which vanishes linearly at $\alpha_{k'}=\alpha_k$ when $m'=m$ and $k'\neq k$.
\end{proof}

\subsection{Quantum Serre and Lefschetz}

Quantum Serre duality, developed by Coates--Givental \cite{CoatesGivental} for varieties and Tseng \cite{Tseng} for orbifolds, can be used to relate the genus-zero GW invariants of $\P(\vec w)$ twisted by the $\T$-equivariant inverse Euler class of $\bigoplus_j\O(-d_j)$ to the genus-zero GW invariants of $\P(\vec w)$ twisted by the $\T$-equivariant Euler class of the dual bundle $\bigoplus_j\O(d_j)$ (with dual $\T$-action). The former invariants are the $\T$-equivariant GW invariants of $X_+$. The quantum Lefschetz theorem states that the non-equivariant limit of the latter invariants can be related to the ambient part of the genus-zero GW invariants of the complete intersection $Z$.

We start with quantum Serre duality:

\begin{theorem}[Tseng \cite{Tseng}]\label{thm:qsd}
Let $\cL_\T^{X_+}\subset \V_\T^{X_+}$ be the Lagrangian cone associated to the GW theory of $\P(\vec w)$ twisted by the $\T$-equivariant Euler class of $\bigoplus_j\O(-d_j)$, and let $\cL_\T^{\P(\vec w),e}\subset \V_\T^{\P(\vec w),e}$ be the Lagrangian cone associated to the GW theory of $\P(\vec w)$ twisted by the $\T$-equivariant Euler class of $\bigoplus_j\O(d_j)$. Then the symplectomorphism $\phi_{\T}^+:\V_\T^{X_+}\rightarrow\V_\T^{\P(\vec w),e}$ defined by
\[
H_{(m)}^l\mapsto \frac{\re^{\pi\ri\sum_j\left(\left\langle d_j m\right\rangle-d_jH_{(m)}/z\right)}}{e_\T\left(\bigoplus_{j=1}^N\O(d_j) \right)}H_{(m)}^l
\]
identifies $\cL_\T^{X_+}$ with $\cL_\T^{\P(\vec w),e}$.
\end{theorem}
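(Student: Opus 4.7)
The plan is to reduce the statement to Tseng's quantum Riemann--Roch theorem applied twice and then to compose the resulting symplectomorphisms. First, I would observe that $\cL_\T^{X_+}$ and $\cL_\T^{\P(\vec w),e}$ are both twisted Gromov--Witten Lagrangian cones on $\P(\vec w)$, arising from multiplicative characteristic classes of the $\T$-equivariant bundles $\bigoplus_j \O(-d_j)$ and $\bigoplus_j \O(d_j)$ respectively. These correspond to choosing $c_-(V) = e_\T^{-1}(V)$ and $c_+(V) = e_\T(V)$ in the framework of \eqref{eq:charclass}; the logarithmic expansion of these classes in Chern characters gives, as in Remark \ref{rmk:gwun}, explicit parameters $\{s_{\pm, l}^j\}$ where the two sets differ by a sign in $l\geq 1$ and by the replacement $d_j\alpha_j-d_j\alpha_k\rightsquigarrow -d_j\alpha_j$ (reflecting the opposite $\T$-weight and the fact that we now sit on $\P(\vec w)$ rather than at a point).

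Second, by Theorem \ref{thm:tseng} applied to each characteristic class, there are diagonal symplectic transformations $\Delta_\pm: \V_\T^{\P(\vec w),\un}\to \V_\T^{X_+}$ (resp. $\V_\T^{\P(\vec w),e}$) with $\Delta_\pm(\cL_\T^{\P(\vec w),\un})=\cL_\T^{X_+}$ (resp. $\cL_\T^{\P(\vec w),e}$). I would take $\phi_\T^+:=\Delta_+\circ\Delta_-^{-1}$ as the candidate symplectomorphism and compute its action on the sector of $H^*_{CR}(\P(\vec w))$ indexed by $m\in\Q/\Z$. Using the explicit form of Tseng's operator, this action is a product over $j$ of exponentials of the form
\[
\exp\!\left(\sum_{l\geq 0} \frac{z^l}{(l+1)!}\Bigl[s_{+,l}^j B_{l+1}(\langle d_j m\rangle) - s_{-,l}^j B_{l+1}(\langle -d_jm\rangle)\Bigr]\right).
\]

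Third, the crucial simplification invokes the Bernoulli identity $B_{l+1}(1-x) = (-1)^{l+1}B_{l+1}(x)$, together with $\langle -d_j m\rangle = 1-\langle d_j m\rangle$ when $d_j m \notin\Z$ (the integer case being a boundary that must be handled as a limit). For even $l$ the Bernoulli contributions combine constructively while for odd $l$ they conspire with the opposite signs in $\{s_{\pm,l}^j\}$ to do the same. Summing the resulting series in $z$ and in the hyperplane class $H_{(m)}$, the $l=0$ term produces the classical factor $e_\T^{-1}(\bigoplus_j \O(d_j))$ in the denominator, the $l=1$ term produces $e^{-\pi\ri\sum_j d_j H_{(m)}/z}$, and the $B_1(\langle d_j m\rangle)$ part on the twisted sector collapses into the phase $e^{\pi\ri\sum_j \langle d_j m\rangle}$. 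Matching these exactly recovers the claimed formula for $\phi_\T^+$.

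The main obstacle is combinatorial bookkeeping: tracking the equivariant weights on the two dual bundles (which give complex-conjugate logarithms whose branches must be chosen consistently), verifying that the integer-boundary cases arising when some $d_j m\in\Z$ are handled correctly by the limit, and checking that the sign conventions in Theorem \ref{thm:tseng} produce exactly the phase $e^{\pi\ri\sum_j(\langle d_jm\rangle - d_jH_{(m)}/z)}$ rather than its inverse. The sector-diagonal structure of $\Delta_\pm$ makes the reduction to a sector-by-sector verification tractable, and no further Gromov--Witten input beyond Tseng's quantum Riemann--Roch is needed.
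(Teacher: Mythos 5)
The paper does not prove this statement---it is cited directly from Tseng's orbifold quantum Riemann--Roch paper, so there is no internal argument to compare against. Your route---realizing both twisted cones as images $\Delta_{\pm}(\cL^{\P(\vec w)}_{\un})$ of the untwisted cone on $\P(\vec w)$ and setting $\phi_\T^+ = \Delta_+\circ\Delta_-^{-1}$---is exactly how Coates--Givental and Tseng derive quantum Serre duality from quantum Riemann--Roch, so the strategy is sound and consistent with the cited source.

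Two imprecisions in the sketch are worth flagging. First, the Theorem~\ref{thm:tseng} you invoke is stated in this paper for the point target $P_k$, and its $\Delta$ carries only nonnegative powers of $z$; for $\P(\vec w)$ you need the full orbifold quantum Riemann--Roch formula, whose Bernoulli expansion runs over all Chern-character degrees of the twisting bundle and therefore produces $H$-dependent terms, including a $z^{-1}$ term. The factor $\re^{-\pi\ri\sum_j d_j H_{(m)}/z}$ in $\phi_\T^+$ comes from that $z^{-1}$ contribution, not from the ``$l=1$ term'' in the exponent (which would sit at $z^{+1}$ in the convention of Theorem~\ref{thm:tseng}). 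Second, $B_1(x)=x-\tfrac{1}{2}$ would naively give a phase $\re^{\pi\ri(\langle d_j m\rangle-1/2)}$; the extra factors of $\re^{-\pi\ri/2}$ per $j$ must be reconciled with the branch of $\log(-1)$ picked up when passing from $e_\T^{-1}$ of $\O(-d_j)$ (weight $\alpha_j$) to $e_\T$ of $\O(d_j)$ (weight $-\alpha_j$), and with the $(-1)^N$ coming from dualizing the rank-$N$ bundle, before the phase can be claimed to collapse to $\re^{\pi\ri\langle d_jm\rangle}$. You acknowledge these as combinatorial bookkeeping---and they are---but they are precisely where sign errors most easily creep in, so the argument is not complete until they are carried out.
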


We now recall the quantum Lefschetz theorem.  Denoting by $i$ the inclusion $Z 
\rightarrow X$, as above, the theorem can be rephrased in our setting as follows:

\begin{theorem}[Coates \cite{Coates2}]\label{thm:qlt}
Let $\bff$ be a point of $\cL_\T^{\P(\vec w),e}$ with a well-defined non-equivariant limit $\lim_{\alpha\rightarrow 0}\bff$. Then $\lim_{\alpha\rightarrow 0}i^*\bff$ lies on $\cL^Z$.
\end{theorem}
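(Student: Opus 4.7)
The plan is to reduce the statement to a quantum-Lefschetz identity between twisted GW correlators of $\P(\vec{w})$ and ambient GW correlators of $Z$, and then propagate the identity through the ruling structure of the twisted cone. Since $\cL_\T^{\P(\vec{w}),e}$ is over-ruled by Theorem \ref{thm:givental}, every point is a $z$-linear combination of derivatives of the twisted $J$-function
\[
J_\T^{\P(\vec{w}),e}(\tau,-z) = -\mathbf{1}z + \tau + \sum_{n,\beta,\mu}\frac{Q^\beta}{n!}\left\langle \tau^n \frac{\Phi_\mu}{-z-\psi}\, e_\T\!\left(\bigoplus_{j=1}^N R\pi_* f^*\O(d_j)\right)\right\rangle_{0,n+1,\beta}^{\P(\vec{w}),\T}\Phi^\mu,
\]
and an analogous $J$-function rules $\cL^Z$. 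It therefore suffices to show $\lim_{\alpha\to 0}i^*J_\T^{\P(\vec{w}),e}(\tau,-z) = J^Z(i^*\tau,-z)$ on a suitable formal neighborhood, since the $z$-coefficients used to express $\bff$ as a ruling combination are preserved under $i^*$ and under the non-equivariant specialization.

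The heart of the argument is the correlator identity
\[
\langle i^*\tilde\phi_1\psi^{a_1}\cdots i^*\tilde\phi_n\psi^{a_n}\rangle^{Z}_{0,n,\beta} \;=\; \lim_{\alpha\to 0}\left\langle \tilde\phi_1\psi^{a_1}\cdots \tilde\phi_n\psi^{a_n}\,e_\T\!\left(\bigoplus_{j=1}^N R\pi_* f^*\O(d_j)\right)\right\rangle^{\P(\vec{w}),\T}_{0,n,\beta}
\]
for ambient insertions $\tilde\phi_i \in H_{CR}^*(\P(\vec{w}))$, whenever the right-hand limit exists. This follows from the virtual pullback identity
\[
i_*[\M_{0,n}(Z,\beta)]^{\vir} = e\!\left(\bigoplus_{j=1}^N \pi_*f^*\O(d_j)\right)\cap [\M_{0,n}(\P(\vec{w}),\beta)]^{\vir}
\]
on the substack where $R^1\pi_*f^*\O(d_j)$ vanishes, obtained by treating $(F_1,\dots,F_N)$ as a section of $\bigoplus_j\pi_*f^*\O(d_j)$ whose zero scheme cuts out $\M_{0,n}(Z,\beta)$, combined with the projection formula $i_*i^*(\cdot) = (\cdot)\cup e(N_{Z/\P(\vec{w})})$ and the identification $N_{Z/\P(\vec{w})} = \bigoplus_j\O_Z(d_j)$. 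On the complementary locus where $R^1$ is nontrivial, the class $e_\T(R^1\pi_*f^*\O(d_j))$ is invertible in $\C(\alpha)$, and the hypothesis that $\lim_{\alpha\to 0}\bff$ exists ensures that the off-$Z$ contributions organize themselves into a finite specialization.

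Once the correlator identity is in hand, assembling it over all $n$, $\beta$, and $\mu$ yields $\lim_{\alpha\to 0}i^*J_\T^{\P(\vec{w}),e}(\tau,-z) = J^Z(i^*\tau,-z)$, and differentiating in $\tau$ followed by $z$-linear combinations propagates the conclusion to arbitrary $\bff$. The main obstacle is controlling the non-equivariant limit on stable maps that do not factor through $Z$: individually, such correlators may blow up as $\alpha \to 0$, and only the combined structure of a point on the cone cancels these divergences. I would address this by invoking a recursive characterization of points on the cone in the spirit of Theorem \ref{thm:conechar}, using the fact that both $i^*$ and the non-equivariant limit commute with the string, dilaton, and topological recursion relations; this allows the well-behavedness of $\lim_{\alpha\to 0}\bff$ to propagate from the leading $J$-function term to every term in the expansion, and concludes that $\lim_{\alpha\to 0}i^*\bff$ lies on $\cL^Z$.
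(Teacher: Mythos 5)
This statement is quoted from Coates \cite{Coates2} and the paper supplies no proof of it, so there is nothing in the paper for your argument to be compared against; it has to be evaluated on its own terms. On those terms there is a genuine gap at the core of your sketch. The correlator identity you propose,
\[
\langle i^*\tilde\phi_1\psi^{a_1}\cdots i^*\tilde\phi_n\psi^{a_n}\rangle^{Z}_{0,n,\beta}
\;=\;
\lim_{\alpha\to 0}\left\langle \tilde\phi_1\psi^{a_1}\cdots \tilde\phi_n\psi^{a_n}\,e_\T\!\left(\bigoplus_{j=1}^N R\pi_*f^*\O(d_j)\right)\right\rangle^{\P(\vec{w}),\T}_{0,n,\beta},
\]
is only established, by the functoriality argument you cite, on the locus where $R^1\pi_*f^*\O(d_j)=0$. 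The entire content of Coates's theorem is to handle the non-convex situation where $R^1$ does \emph{not} vanish (which is unavoidable here because of the orbifold structure), and there the twisted correlator contains a factor $e_\T(R^1\pi_*f^*\O(d_j))^{-1}$ that is only an element of $\C(\alpha)$, not a genuine cohomology class. Individual twisted correlators generally do \emph{not} converge as $\alpha\to 0$, and when their aggregates on the cone do converge, the limit is not obtained term-by-term by the identity above. Your phrase ``the off-$Z$ contributions organize themselves into a finite specialization'' names the difficulty but does not resolve it; this organization is precisely what has to be proven, and it is the heart of the theorem, not a step one can assume.

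The repair you suggest --- invoking a recursion ``in the spirit of Theorem \ref{thm:conechar}'' --- is not applicable in the form you describe. That characterization concerns poles in $z$ coming from $\T$-localization on the GLSM moduli space for $X_-$; the obstruction here is poles in the equivariant parameters $\alpha$ coming from $R^1$, which is a different kind of singularity requiring a different mechanism to cancel. Likewise, the observation that $i^*$ and the non-equivariant limit commute with string, dilaton, and TRR does not by itself force the limit to land on $\cL^Z$: those relations are satisfied by \emph{any} axiomatic genus-zero theory and encode nothing specific about the passage from the twisted theory of $\P(\vec w)$ to the ambient theory of $Z$. The actual argument in \cite{Coates2} goes through the (orbifold) quantum Riemann--Roch theorem --- the same symplectomorphism $\Delta$ that appears in Theorem \ref{thm:tseng} of this paper --- together with a careful analysis of which points of the twisted cone admit non-equivariant limits and how $\Delta$ interacts with $i^*$ and the specialization $\alpha\to 0$. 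To repair your sketch you would need to replace the false term-by-term correlator identity with this Riemann--Roch bookkeeping, or at the very least prove (rather than assert) that the existence of $\lim_{\alpha\to 0}\bff$ forces the excess $R^1$-contributions to pull back to zero along $i^*$.
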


\begin{remark}
The condition \textbf{(A1)} is necessary here; it is equivalent to the assertion that $\bigoplus_j\O(d_j)$ is pulled back from the coarse underlying space of $\P(\vec w)$.
\end{remark}



\subsection{Narrow GLSM cone as a non-equivariant limit}

In all of the paper thus far, we have been working with the equivariantly-extended GLSM, and not the narrow GLSM, which was our original motivation. We now turn to the study of the narrow GLSM.

Define the Givental space associated to the narrow state space
\[
\V^{X_-,W}:=\H^W[z,z^{-1}]((Q^{-\frac{1}{d}})),
\]
and let the (non-equivariant) formal subspace $\hat\cL^{X_-,W}$ be the collection of points of the form
\begin{equation}
\label{eq:noneqsub}
I^{X_-,W}(Q,-z)+\bt(z) +\sum_{n,\beta\atop \mu\in\text{nar}} \frac{Q^\beta}{n!}\left\langle \bt(\psi)^n\; \frac{\Phi_\mu}{-z-\psi}\right\rangle^{X_-,W}_{0,n+1,\beta} \Phi^\mu,
\end{equation}
where $\bt(z)\in\V^{X_-,W,+}$, $\mu$ only varies over a basis of the narrow sectors, and $I^{X_-,W}(Q,z)$ is the non-equivariant limit of $I^{X_-,W}_\T(Q,z)$.

The equivariant and non-equivariant formal subspaces are related as follows:

\begin{lemma}\label{nelimit}
The formal subspace $\hat\cL^{X_-,W}$ lies in $\V^{X_-,W}$ and can be obtained from $\hat\cL_\T^{X_-,W}$ by first restricting $\bt(z)$ to $\V^{X_-,W,+}\subset\V_\T^{X_-,+}$ and then taking a non-equivariant limit.\footnote{There is a slight abuse of terminology here. By the ``restriction'' of $H_{CR,T}^*(X_-)$ to $\H^W$, we mean the restriction to equivariant cohomology classes that
\begin{enumerate}[(a)]
\item are supported on the narrow sectors, and
\item have well-defined non-equivariant limits in $\H^W$.
\end{enumerate}
}
\end{lemma}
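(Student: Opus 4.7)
The plan is to verify, term by term, that each ingredient of the defining series for $\hat\cL_\T^{X_-,W}$---the equivariant $I$-function and the extended correlators---restricts to narrow sectors and admits a non-equivariant limit matching the corresponding ingredient of the defining series \eqref{eq:noneqsub} for $\hat\cL^{X_-,W}$. Once this is in place, the lemma's two claims follow immediately: the first from the existence of the limits, the second from their explicit form. I would handle the correlator terms first, then the $I$-function, and finally collate the two.

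For the correlator terms, I would fix $\bt(z)\in\V^{X_-,W,+}$ and a narrow basis element $\Phi_\mu\in\H^W$. Then all insertions in
\[
\Big\langle \bt(\psi)^n\;\frac{\Phi_\mu}{-z-\psi}\Big\rangle^{X_-,W,\T}_{0,n+1,\beta}
\]
are narrow, so Lemma~\ref{sm} applies to give the identification $\QM^W_{0,\vec m}(X_-,\beta) = \QM^W_{0,\vec m}(\P(\vec d),\beta)$ together with the vanishing $R^0\pi_*(\cL^{\otimes w_i})=0$ for every $i$. In equivariant $K$-theory, $\bigoplus_i R\pi_*(\cL^{\otimes w_i})$ then equals $-\bigoplus_i R^1\pi_*(\cL^{\otimes w_i})$, so the extended virtual class simplifies to
\[
[\QM^W_{0,\vec m}(X_-,\beta)]_\T^{\vir} = e_\T\!\Big(\bigoplus_{i=1}^M R^1\pi_*(\cL^{\otimes w_i})\Big)\cap[\QM^W_{0,\vec m}(\P(\vec d),\beta)]^{\vir}.
\]
Since this is polynomial in the equivariant parameters, it has a non-equivariant limit obtained by replacing $e_\T$ with $e$, which is precisely the narrow virtual class \eqref{virclass}. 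Combined with the fact that narrow evaluation pullbacks and $\psi$-classes have non-equivariant limits---and that the equivariant dual $\Phi^\mu$ has a non-equivariant limit equal to the ordinary Poincar\'e dual, because narrow sectors $X_{(m)}$ are compact (and closed under $m\mapsto -m$)---this establishes that each such correlator limits to the corresponding non-equivariant narrow correlator.

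For the $I$-function, the argument is direct inspection of \eqref{Ifunction}. For each $a$ with $\langle a\rangle\in\text{nar}$, the numerator factors involve $e_\T(\O(w_i))$, which are $\alpha$-independent because $\T$ acts trivially on the $x$-coordinates defining these bundles, while the denominator factors $bz + e_\T(\O(-d_j))$ are regular at $\alpha=0$ because $b>0$ in every factor. Hence each narrow summand has a non-equivariant limit, and summing gives exactly $I^{X_-,W}(Q,z)$.

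Combining these two observations, restricting $\bt$ to $\V^{X_-,W,+}$ in the defining series for $\hat\cL_\T^{X_-,W}$ and projecting the output to narrow sectors produces an expression whose non-equivariant limit is precisely \eqref{eq:noneqsub}. I do not anticipate any major obstacle: the heart of the argument is that the all-narrow hypothesis of Lemma~\ref{sm} collapses the extended virtual class to a polynomial in $\alpha$. The only mild subtlety is confirming that adding the ``dual'' mark associated with $\Phi_\mu\in\H^W$ preserves narrowness of the collection of marks, which is automatic by definition of $\H^W$.
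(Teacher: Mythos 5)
Your argument handles the narrow sectors correctly, but it has two genuine gaps, both concerning the \emph{broad} sectors; and it is precisely here that the hypothesis \textbf{(A2)} --- which your proposal never invokes --- is essential.

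First, for the claim that $\hat\cL^{X_-,W}$ lies in $\V^{X_-,W}$: your $I$-function analysis only treats summands with $\langle a\rangle\in\text{nar}$ and shows they have non-equivariant limits. But $I^{X_-,W}(Q,z)$ is defined as the non-equivariant limit of the \emph{full} equivariant $I$-function, so you must also show that the broad summands (those with $\langle a\rangle\notin\text{nar}$) vanish. This is where the paper uses \textbf{(A2)}: for such $a$, with $I=\{i : w_i a \in\Z\}$ and $J=\{j : d_j a \in\Z\}$, one has $|I|\geq|J|$, and the numerator of the corresponding summand of $I^{X_-,W}$ contains a factor $H_{(a)}^{|J|}$, which vanishes since $\dim X_{(a)} = |J|-1$. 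Without this step, the first assertion of the lemma is unproved.

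Second, for the claim that restricting $\bt(z)$ and taking the limit yields $\hat\cL^{X_-,W}$: you ``project the output to narrow sectors'' before taking the limit, but this is not what the lemma asserts. The limit is taken of the whole point of $\hat\cL_\T^{X_-,W}$, and the broad summands must be shown to \emph{vanish automatically} in the non-equivariant limit. For $\Phi_\mu = H_{(m)}^l$ with $m\notin\text{nar}$, the paper computes $\Phi^\mu = H_{(-m)}^{|J|-l}\, e_\T\bigl(\bigoplus_{i\in I}\O_{X_{(m)}}(w_i)\bigr)$ and again uses \textbf{(A2)} to get $\lim_{\alpha\to 0}\Phi^\mu = 0$ because $H_{(-m)}^{|J|}=0$. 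Moreover, before one can even discuss the limit of such a broad summand, one must know that the correlator itself is regular at $\alpha=0$; since one of the $n+1$ insertions (the one paired with $\Phi_\mu$) is broad, Lemma~\ref{sm} does not apply --- instead one needs Remark~\ref{rmk:onebroad}, which handles the case of exactly one broad mark. Your proposal applies Lemma~\ref{sm} throughout and thus only covers the all-narrow case, leaving the broad-$\mu$ correlators unjustified.
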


\begin{proof}
To prove the first assertion, we only need to show that 
\[
I^{X_-,W}(Q,z)=z\sum_{a\in\frac{1}{d}\Z \atop a>0} Q^{-a}\frac{\prod_{i=1}^M\prod_{0\leq b< a w_i \atop \langle b \rangle = \langle a w_i \rangle}(-bz+w_iH)}{\prod_{j=1}^N\prod_{0< b< a d_j \atop \langle b \rangle = \langle a d_j \rangle}(bz+d_jH)}\one_{( a )}
\]
lies in the narrow Givental space. Suppose $\langle a\rangle\notin\text{nar}$, so that $w_{i_0}a\in\Z$ for some $i_0$.  Assumption \textbf{(A2)} asserts that, setting $I=\{i:w_ia\in\Z\}$ and $J=\{j:d_ja\in\Z\}$, we have $|I|\geq |J|$. In particular, this implies that the numerator of $I^{X_-,W}(Q,z)$ has a factor of $H_{(a)}^{|J|}=0$.

Now consider the second assertion.  A point in $\hat\cL_\T^{X_-,W}$ for which $\t(z) \in \V^{X_-,W,+}$ indeed has a well-defined non-equivariant limit, because all but possibly one of the insertions in the correlators that appear are drawn from the narrow state space; thus, by Remark \ref{rmk:onebroad}, the virtual class is an Euler class of a vector bundle, so it admits a non-equivariant limit.  Moreover, the summand of \eqref{eq:formalGLSM} indexed by $\mu$ vanishes whenever $\Phi_\mu\notin\H^W$. Indeed, if $\Phi_\mu=H_{(m)}^l$ with $m\notin\text{nar}$, then we have
\[
\Phi^\mu=H_{(-m)}^{|J|-l}e_\T\left(\bigoplus_{i\in I}\O_{X_{(m)}}(w_i) \right);
\]
here, as above, $I=\{i:w_im\in\Z\}$ and $J=\{j:d_jm\in\Z\}$. Assumption \textbf{(A2)} asserts that $|I|\geq |J|$ and it follows that $\lim_{\alpha\rightarrow 0}\Phi^\mu$ has a factor of $H_{(-m)}^J=0$.  This proves that the non-equivariant limit of a point in $\hat\cL_\T^{X_-,W}$ with $\t(z) \in \V^{X_-,W,+}$ is of the form \eqref{eq:noneqsub}, as claimed.
\end{proof}

\begin{corollary}\label{cor:nelimit}
The subspace $\hat\cL^{X_-,W}$ is a formal germ of an over-ruled Lagrangian cone $\cL^{X_-,W}$. In particular, $\cL^{X_-,W}$ consists of points of the form
\[
\left\{ z\hat\bt(z)+\sum_{n,\beta \atop \mu\in\text{nar}} \frac{Q^\beta}{n!}\left\langle z\hat\bt(z)\;t^n\; \frac{\Phi_\mu}{-z-\psi}\right\rangle^{X_-,W}_{0,n+2,\beta} \Phi^\mu \; \Bigg|\; {\hat\bt(z)\in\V^{X_-,W,+}\atop t\in\H^W}\right\}.
\] 
In addition, all points of $\cL^{X_-,W}$ are obtained by taking the non-equivariant limit of points in a subspace $\cL_{\T,\mathrm{pre}}^{X_-,W}\subset\cL_\T^{X_-,W}$.
\end{corollary}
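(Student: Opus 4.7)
The plan is to deduce the corollary directly from Theorem \ref{maintheorem} together with the argument of Lemma \ref{nelimit}, by carefully identifying the subspace of $\cL_\T^{X_-,W}$ whose non-equivariant limit exists, and then transferring the over-ruled cone structure through the limit. First, Theorem \ref{maintheorem} gives that $\cL_\T^{X_-,W} = \cL_\T^{X_-}$ is an over-ruled Lagrangian cone, so by the standard tangent-space parametrization (a variant of \eqref{eq:ruling}), every point of $\cL_\T^{X_-,W}$ may be written as
\[
z\hat\bt(z)+\sum_{n,\beta,\mu}\frac{Q^\beta}{n!}\left\langle z\hat\bt(z)\;t^n\;\frac{\Phi_\mu}{-z-\psi}\right\rangle^{X_-,\T}_{0,n+2,\beta}\Phi^\mu
\]
for some $\hat\bt(z)\in\V_\T^{X_-,+}$ and $t\in\H$. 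I would then define $\cL_{\T,\mathrm{pre}}^{X_-,W}\subset\cL_\T^{X_-,W}$ to be the image of the restricted parametrization in which $\hat\bt(z)\in\V^{X_-,W,+}$ and $t\in\H^W$.

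Next, I would verify that every point of $\cL_{\T,\mathrm{pre}}^{X_-,W}$ admits a well-defined non-equivariant limit, by adapting the argument of Lemma \ref{nelimit}. Two ingredients are required: (i) by Remark \ref{rmk:onebroad}, since all insertions in the correlators other than $\Phi_\mu$ are drawn from the narrow state space, the virtual class is an Euler class of a vector bundle and hence admits a non-equivariant limit; (ii) for each summand with $\Phi_\mu=H_{(m)}^l$ and $m\notin\mathrm{nar}$, assumption \textbf{(A2)} forces the Poincar\'e dual $\Phi^\mu$ to contain a factor $H_{(-m)}^{|J|}$ (with $J=\{j:d_jm\in\Z\}$, $I=\{i:w_im\in\Z\}$, and $|I|\geq|J|$), which vanishes in the non-equivariant limit. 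Consequently, the non-equivariant limit lies in $\V^{X_-,W}$ and has exactly the form displayed in the statement of the corollary, so we may define $\cL^{X_-,W}$ as the image of this limit.

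Finally, I would transfer the over-ruled cone structure. The map $(\hat\bt(z),t)\mapsto f(\hat\bt,t)$ is manifestly a tangent-space parametrization of $\cL^{X_-,W}$: fixing $t$ and varying $\hat\bt\in\V^{X_-,W,+}$ sweeps out the tangent space $T_{f(0,t)}$, while the substitution $\hat\bt\mapsto z\hat\bt$ (valid since $\V^{X_-,W,+}$ is closed under multiplication by $z$) shows that $zT\subset\cL^{X_-,W}$. The vertex $I^{X_-,W}(Q,-z)$ of $\cL^{X_-,W}$ arises as the non-equivariant limit of $I_\T^{X_-,W}(Q,-z)\in\cL_\T^{X_-,W}$ (Lemma \ref{nelimit} shows this limit exists and has the expected form). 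The identification of $\hat\cL^{X_-,W}$ as a formal germ of $\cL^{X_-,W}$ centered at this vertex is then precisely the content of the second half of Lemma \ref{nelimit}, upon matching the parametrization of the formal neighborhood by $\bt(z)\in\V^{X_-,W,+}$.

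The main technical obstacle is ensuring that the tangent parametrization of $\cL^{X_-,W}$ really coincides with the image under the non-equivariant limit, that is, that no additional points arise and none are lost. Loss is ruled out by Theorem \ref{maintheorem} and the fact that $\hat\cL^{X_-,W}$ lies in the non-equivariant limit of $\cL_{\T,\mathrm{pre}}^{X_-,W}$; the absence of extra points follows from the vanishing of broad summands above. A subsidiary subtlety is that the formal parameter $t\in\H^W$ in the tangent-space presentation of $\cL^{X_-,W}$ does not coincide with the formal parameter $\bt(z)$ appearing in the germ $\hat\cL^{X_-,W}$; relating the two requires inverting the dilaton-shifted change of variables, which is well-defined because the leading $z$-coefficient of $\bt(z)$ identifies with $t$.
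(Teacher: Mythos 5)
Your proposal is correct and follows essentially the same route as the paper: invoking Theorem \ref{maintheorem} for the over-ruled cone structure of $\cL_\T^{X_-,W}$, defining $\cL_{\T,\mathrm{pre}}^{X_-,W}$ by restricting the tangent-space parametrization to $\hat\bt(z)\in\V^{X_-,W,+}$ and $t\in\H^W$, and carrying over Lemma \ref{nelimit}'s vanishing argument (via Remark \ref{rmk:onebroad} and assumption \textbf{(A2)}) to show the non-equivariant limit exists and lands in $\V^{X_-,W}$. You spell out the transfer of the over-ruled structure and the matching of the two parametrizations in more detail than the paper, which simply cites Lemma \ref{nelimit}, but the underlying argument is the same.
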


\begin{proof}
Since $\hat\cL^{X_-,W}_\T$ is a germ of an over-ruled Lagrangian cone by Theorem \ref{maintheorem}, the corresponding fact for $\hat\cL^{X_-,W}$ follows from Lemma \ref{nelimit}. This implies that the points of $\cL^{X_-,W}$ can be written as formal linear combinations of any transverse slice, leading to the description given in the statement of the corollary. 

To express $\cL^{X_-,W}$ as a non-equivariant limit, write a general point of the equivariant cone $\cL^{X_-,W}_\T$ as
\[
\left\{ z\hat\bt(z)+\sum_{n,\beta,\mu} \frac{Q^\beta}{n!}\left\langle z\hat\bt(z)\;t^n\; \frac{\Phi_\mu}{-z-\psi}\right\rangle^{X_-,W,\T}_{0,n+2,\beta} \Phi^\mu \; \Bigg| \;{\hat\bt(z)\in\V_\T^{X_-,+}\atop t\in H_{CR}^*(X_-)}\right\}
\]
and define $\cL_{\T,\mathrm{pre}}^{X_-,W}$ to be the set of points such that $\hat\bt(z)\in\V^{X_-,W,+}$ and $t\in\H^W$. Then the same argument given in the proof of Lemma \ref{nelimit} implies that 
\[
\cL^{X_-,W}=\lim_{\alpha\rightarrow 0}\cL_{\T,\mathrm{pre}}^{X_-,W}.
\]
\end{proof}

\begin{remark}
The ``pre'' in the notation stands for ``pre-narrow''. It indicates that $\cL_{\T,\mathrm{pre}}^{X_-,W}$ does not necessarily lie in the narrow subspace, but its non-equivariant limit does lie in the narrow subspace and recovers the narrow cone.
\end{remark}

\subsection{Proof of Theorem \ref{maintheorem2}}

We now outline the proof of Theorem \ref{maintheorem2}, leaving the details for Lemma \ref{lastlemma}.

The combination of Theorems \ref{maintheorem}, \ref{thm:ctc}, and \ref{thm:qsd} provides us with a symplectomorphism
\[
\phi_\T^+\circ\bU_\T:\V_\T^{X_-}\rightarrow \V_\T^{\P(\vec w),e}
\]
that identifies the extended GLSM Lagrangian cone $\cL_\T^{X_-,W}\subset \V_\T^{X_-}$ with the twisted GW Lagrangian cone $\cL_\T^{\P(\vec w),e}\subset \V_\T^{\P(\vec w),e}$, after analytic continuation.  (Recall, $\cL_\T^{X_-, W}$ is defined, in light of Theorem \ref{maintheorem}, to be the GW cone $\cL_\T^{X_-}$.)  In order to prove Theorem \ref{maintheorem2}, we need to investigate the non-equivariant limit of a suitable restriction of $\phi_\T^+\circ\bU_\T$. 

More specifically, we prove in Lemma \ref{lastlemma} below that the composition $\phi_\T^+\circ\bU_\T$ has a well-defined non-equivariant limit after restricting to the narrow subspace $\V^{X_-,W}$, and this allows us to define the symplectic isomorphism $\bV:\V^{X_-,W}\rightarrow\V^Z$  by
\[
\bV:=\lim_{\alpha\rightarrow 0}\left(i^*\circ\phi_\T^+\circ\bU_\T\big|_{\V^{X_-,W}}\right).
\]

In addition, we prove in Lemma \ref{lastlemma} that, for any $\bff\in \cL_{\T,\mathrm{pre}}^{X_-,W}$, the image $\phi_\T^+\circ\bU_\T\left(\bff \right)$ has a well-defined non-equivariant limit. Thus, by Theorem \ref{thm:qlt}, we obtain 
\begin{equation}\label{equationone}
\lim_{\alpha\rightarrow 0}\left(i^*\circ\phi_\T^+\circ\bU_\T\left(\bff\right)\right)\in \cL^Z.
\end{equation}

Lastly, we prove that, for any $\bff\in \cL_{\T,\mathrm{pre}}^{X_-,W}$, the non-equivariant limits commute:
\begin{equation}\label{equationtwo}
\lim_{\alpha\rightarrow 0}\left(i^*\circ\phi_\T^+\circ\bU_\T\left(\bff\right)\right)=\bV\left(\lim_{\alpha\rightarrow 0}\bff\right).
\end{equation}
Since all points of $\cL^{X_-,W}$ are obtained as $\lim_{\alpha\rightarrow 0}\bff$ for some $\bff\in \cL_{\T,\mathrm{pre}}^{X_-,W}$ (Corollary \ref{cor:nelimit}), equations \eqref{equationone} and \eqref{equationtwo} imply that $\bV$ identifies $\cL^{X_-,W}$ with $\cL^Z$.

The following lemma provides the requisite details to complete these arguments.

\begin{lemma}\label{lastlemma}
With definitions as above, we have the following:
\begin{enumerate}[(i)]
\item The restricted symplectomorphism $\phi_\T^+\circ\bU_\T\big|_{\V^{X_-,W}}$ has a well-defined non-equivariant limit, and the map
\[
\bV:=\lim_{\alpha\rightarrow 0}\left(i^*\circ\phi_\T^+\circ\bU_\T\big|_{\V^{X_-,W}}\right)
\]
is a symplectic isomorphism.
\item  For any $\bff\in \cL_{\T,\mathrm{pre}}^{X_-,W}$, the image $\phi_\T^+\circ\bU_\T\left(\bff \right)$ has a well-defined non-equivariant limit.
\item For any $\bff\in\cL_{\T,\mathrm{pre}}^{X_-,W}$, we have 
\[
\lim_{\alpha\rightarrow 0}\left(i^*\circ\phi_\T^+\circ\bU_\T\left(\bff \right)\right)=\bV\left(\lim_{\alpha\rightarrow 0}\bff \right).
\]
\end{enumerate}
\end{lemma}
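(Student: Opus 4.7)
The plan is to exploit the factorization $\bU_\T = \Gamma_+ \circ \overline\bU_\T \circ \Gamma_-^{-1}$ recorded after Theorem \ref{thm:ctc}. Since $\Gamma_\pm$ act diagonally on twisted sectors and admit invertible non-equivariant limits, all three parts reduce to analyzing the composition $i^* \circ \phi_\T^+ \circ \overline\bU_\T$, using the completely explicit formula \eqref{chern} together with the vanishing and pole structure in Lemma \ref{lem:vanish}. The two sources of possible singularities at $\alpha = 0$ are (a) the simple poles of $\overline\bU_\T(\one^k_{(m)})$ at $\alpha_j = \alpha_{k}$, $j\neq k$, and (b) the factor $e_\T^{-1}\bigl(\bigoplus_j\O(d_j)\bigr)$ inside $\phi_\T^+$, whose polar order on the sector indexed by $l$ equals $|J_l| := \#\{j : d_j l \in \Z\}$.

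For part (i), I first write a non-equivariant narrow basis vector $H^l_{(m)}$ in the fixed-point basis and push it through $\overline\bU_\T$. The poles in (a) cancel because the starting class is a well-defined non-localized cohomology class on $X_-$, so its expression in the fixed-point basis summed against $\overline\bU_\T$ must reproduce an honest (non-localized) equivariant class on $X_+$; Lemma \ref{lem:vanish}(i)--(ii) is precisely the book-keeping needed to verify this directly. For the factor in (b), the strategy mirrors the proof of Lemma \ref{nelimit}: on the sector $l$, assumption \textbf{(A2)} forces the numerator of $\overline\bU_\T\big|_{\text{narrow}}$ (in the non-equivariant limit) to contain at least $|J_l|$ powers of $H$, while assumption \textbf{(A1)} ensures $\bigoplus_j\O(d_j)$ descends from the coarse space so that the factors $d_j H + \alpha_j$ enter cleanly; after $i^*$, the remaining powers of $H$ are truncated appropriately, and the ratio is regular at $\alpha = 0$. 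Symplecticity of $\bV$ is then formal: $\phi_\T^+\circ\bU_\T$ is already symplectic by Theorems \ref{thm:ctc} and \ref{thm:qsd}, and the twisting by $e_\T\bigl(\bigoplus_j\O(d_j)\bigr)$ in $\phi_\T^+$ is designed precisely so that $i^*$ matches the twisted pairing with the Poincaré pairing on $Z$; that $\bV$ is an isomorphism can be checked by computing its leading term on each narrow basis vector $H^l_{(m)}$ and matching with the Chiodo--Nagel-type state space isomorphism \eqref{eq:vs2}.

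For parts (ii) and (iii), observe that a point $\bff \in \cL_{\T,\mathrm{pre}}^{X_-,W}$ is assembled from correlators with insertions drawn from $\V^{X_-,W,+}$ and $\H^W$, so its equivariant cohomology coefficients satisfy the same narrow-vanishing properties used in the proof of Lemma \ref{nelimit}. Applying the regularity argument from part (i) coefficient-by-coefficient in the expansion of $\bff$ yields a non-equivariant limit for $\phi_\T^+\circ\bU_\T(\bff)$, giving (ii). For (iii), I invoke $\C(\alpha,z)$-linearity of $\phi_\T^+\circ\bU_\T$ and $i^*$: once the non-equivariant limit of the restricted symplectomorphism exists from part (i), applying it commutes with the limit operation on expansion coefficients, so
\[
\lim_{\alpha\to 0}\bigl(i^*\circ\phi_\T^+\circ\bU_\T(\bff)\bigr) = \bV\bigl(\lim_{\alpha\to 0}\bff\bigr).
\]

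The main obstacle is the pole-cancelation in part (i)(b). While the corresponding cancelation for $I^{X_-,W}_\T$ in Lemma \ref{nelimit} is visible from a single product formula, tracing it through $\overline\bU_\T$ requires careful combinatorics: one must show that for each $m\in\mathrm{nar}$, the rational function of $\alpha$ appearing as the $H^{l''}_{(l')}$-coefficient of $\overline\bU_\T(H^l_{(m)})$ carries enough factors of $H$ (after setting $\alpha=0$ and using relations defining $H^*(X_{(l')})$) to absorb the $|J_{l'}|$-th order pole of $\phi_\T^+$ on that sector, and that the surviving residue becomes zero after $i^*$ whenever the target sector is non-ambient. Assumptions \textbf{(A1)} and \textbf{(A2)} enter exactly at this step, just as they did in Lemma \ref{nelimit}.
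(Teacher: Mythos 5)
Your overall toolkit — the factorization through $\overline\bU_\T$, the pole analysis from Lemma~\ref{lem:vanish}, and the narrowness hypotheses — is the right one, and the sketch of part~(i) is in the same spirit as the paper's argument, though the paper's route is cleaner than what you describe: rather than counting powers of $H$ sector-by-sector against $|J_l|$, one observes that for narrow $\Phi$, Lemma~\ref{lem:vanish}(i) together with $\one_{(-m)}=0\in H^*_{CR}(X_+)$ forces $\bU_\T(\Phi)$ to vanish at $H=\alpha_j$ for \emph{every} $j$. Since $\bU_\T(\Phi)$ is already regular at $\alpha=0$ (coming from a Fourier--Mukai), this yields a global factorization $\bU_\T(\Phi)=\prod_j(H-\alpha_j)\widehat\bU_\T(\Phi)$ that cancels the entire $e_\T^{-1}(\oplus_j\O(d_j))$ in one stroke, with no per-sector bookkeeping required. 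Your invocation of~(A2) here is not where the paper uses it; narrowness alone suffices for part~(i).

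The genuine gap is in parts~(ii) and~(iii). You treat~(ii) as ``the same analysis coefficient-by-coefficient,'' but a point $\bff\in\cL^{X_-,W}_{\T,\mathrm{pre}}$ is \emph{not} supported on the narrow subspace: the dual basis elements $\Phi^\mu=H^a_{(m)}e_\T\bigl(\bigoplus_{i\in I}\O_{X_{(m)}}(w_i)\bigr)$ appearing in its expansion include broad sectors with $I\neq\emptyset$. The narrow argument from~(i) does not apply to these, and the paper must analyze them separately: for $l\neq -m$ the positive power of $\alpha$ coming from the Euler class kills the limit of the $\phi_\T^+(\bU_\T(\Phi^\mu)_{(l)})$ piece, while for $l=-m$ a more delicate expansion together with $H^{|I|}_{(-m)}=0$ (this is where~(A1),~(A2) enter) establishes regularity. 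Moreover, the outcome is that the broad parts land in $\ker i^*$. Your part~(iii) then needs exactly this fact: ``linearity plus commuting limits'' is not enough, because $\phi_\T^+\circ\bU_\T$ does \emph{not} admit a non-equivariant limit on all of $\V_\T^{X_-}$. One must split $\bff=\bff'+\bff''$ with $\bff'$ narrow, pass the limit through the narrow part only, and invoke that $\lim_{\alpha\to 0}\phi_\T^+\circ\bU_\T(\bff'')$ vanishes under $i^*$. Without establishing this vanishing, the commutativity claimed in~(iii) does not follow.
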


\begin{proof}
We begin with assertion (i). We must show that $\phi_\T^+\circ\bU_\T(\Phi)$ has a non-equivariant limit whenever $\Phi\in\H^W$. If $\Phi=H_{(m)}^a$ with $m\in\text{nar}$, then the localization isomorphism allows us to write $\Phi$ in terms of the classes $\one_{(m)}^k$. Narrowness implies that $\one_{(-m)}=0\in H_{CR}^*(X_+)$ and, putting this together with Lemma \ref{lem:vanish}, we see that $\bU_\T(\Phi)$ vanishes at $H=\alpha_j$ for all $j$. Since $\bU_\T(\Phi)$ has a non-equivariant limit, this implies that we can write
\[
\bU_\T(\Phi)=\prod_{j=1}^N(H-\alpha_j)\widehat{\bU}_\T(\Phi),
\]
in which $\widehat{\bU}_\T(\Phi)$ has a non-equivariant limit.  The transformation $\phi_{\T}^+$ is defined via division by $e_\T\left(\oplus_j\O(d_j)\right)=\prod_jd_j(H-\alpha_j)$, so this implies that $\phi_\T^+\circ\bU_\T(\Phi)$ has a well-defined non-equivariant limit.  Moreover, the limit is manifestly supported away from the top $N$ powers of $H$, a fact we will use shortly.

We can now define $\bV:=\lim_{\alpha\rightarrow 0}\left(i^*\circ\phi_{\T}^+\circ\bU_\T|_{\V^{X_-,W}}\right)$. To see that $\bV:\V^{X_-,W}\rightarrow \V^Z$ is a symplectic isomorphism, note that the original map $\phi_\T^+\circ\bU_\T$ is a symplectic isomorphism, implying that its restriction to $\V^{X_-,W}$ is also a symplectic isomorphism onto its image. Taking non-equivariant limits, the fact that the image $\lim_{\alpha\rightarrow 0}\phi_\T^+\circ\bU_\T|_{\V^{X_-,W}}$ is supported away from the top $N$ powers of $H$ implies that $i^*$ identifies this image with the ambient part of $\V^Z$.

We now prove assertion (ii).  Since $\bff$ need not be supported on the narrow subspace, the same argument as above does not immediately apply.  However, by the definition of $\cL^{X_-,W}_{\T,\mathrm{pre}}$, the cohomology classes appearing in $\bff$ are of the form
\[
\Phi^\mu=H_{(m)}^a e_\T\left(\bigoplus_{i\in I}\O_{X_{(m)}}(w_i) \right),
\]
where $I=\{i:w_im\in\Z\}$.  We only consider the case where $\Phi^{\mu}$ is not narrow, since in the narrow situation, the proof of (i) does imply the existence of the non-equivariant limit of $\phi_\T^+ \circ \bU_\T(\Phi^{\mu})$.

By the localization isomorphism, we can write
\[
H_{(m)}^a=\sum_{k}f_{k}(\alpha)\one_{(m)}^k,
\]
in which each $f_k(\alpha)$ is a degree-$a$ homogeneous polynomial in the $\alpha_j$. By linearity, we have
\begin{equation}\label{eq:sumone}
\bU_\T(H_{(m)}^l)=\sum_{k}f_{k}(\alpha)\bU_\T\left(\one_{(m)}^k\right).
\end{equation}
Multiplying by the Euler class, we compute
\[
\Phi^\mu=\sum_{k}f_{k}(\alpha)\one_{(m)}^k\prod_{i\in I}w_i\alpha_k
\]
and
\begin{equation}\label{eq:sumtwo}
\bU_\T(\Phi^\mu)=\sum_{k}f_{k}(\alpha)\bU_\T\left(\one_{(m)}^k\right)\prod_{i\in I}w_i\alpha_k.
\end{equation}

Let $\bU_\T(\Phi)_{(l)}$ denote the part of $\bU_\T(\Phi)$ supported on the twisted sector indexed by $l$. By the same argument given in the proof of (i), the image $\phi_\T^+\left(\bU_\T(H_{(m)}^a)_{(l)}\right)$ has a well-defined non-equivariant limit as long as $l\neq -m$. Given that $\Phi^{\mu}$ is not narrow and hence $I \neq \emptyset$, one obtains $\bU_\T(\Phi^\mu)_{(l)}$ from $\bU_\T(H_{(m)}^a)_{(l)}$ by multiplying each summand in \eqref{eq:sumone} by a positive power of $\alpha$.  Thus,
\begin{equation}\label{brvanish}
\lim_{\alpha\rightarrow 0}\phi_\T^+\left(\bU_\T(\Phi^\mu)_{(l)}\right)=0 \text{ whenever }l\neq -m.
\end{equation}

It is left to prove that $\phi_\T^+\left(\bU_\T(\Phi^\mu)_{(-m)}\right)$ has a well-defined non-equivariant limit. By Lemma \ref{lem:vanish}, we know that the $k$-summand of \eqref{eq:sumtwo} has zeroes at $H=\alpha_j$ for $j\neq k$ and possible poles along $\alpha_k=\alpha_j$ for $j\neq k$. Since $\bU_\T(\Phi^\mu)$ has a well-defined non-equivariant limit, the poles cancel in the sum. Thus, we can write
\begin{align}\label{eq:sumthree}
\nonumber\frac{\bU_\T(\Phi^\mu)_{(-m)}}{\prod_{j=1}^N(H-\alpha_j)}&=\sum_{k}f_{k}(\alpha)\frac{\widehat\bU_\T\left(\one_{(m)}^k\right)_{(-m)}\prod_{i\in I}w_i\alpha_k}{H-\alpha_k}\\
&=-\sum_{k}f_{k}(\alpha)\widehat\bU_\T\left(\one_{(m)}^k\right)_{(-m)}\prod_{i\in I}w_i\sum_{b\geq 0}\alpha_k^{|I|-1-b}H^b,
\end{align}
and the only poles in the equivariant parameters of $\widehat\bU_\T\left(\one_{(m)}^k\right)_{(-m)}$ occur along $\alpha_k=\alpha_j$ for $j\neq k$, which cancel in the sum. Conditions \textbf{(A1)} and \textbf{(A2)} imply that $H_{(-m)}^{|I|}=0$, showing that \eqref{eq:sumthree} has a well-defined non-equivariant limit. It follows that $\phi_\T^+\left(\bU_\T(\Phi^\mu)_{(-m)}\right)$ has a well-defined non-equivariant limit, concluding the proof of (ii). 

Notice, also, that the non-equivariant limit of $\phi_\T^+\left(\bU_\T(\Phi^\mu)_{(-m)}\right)$ is supported on $H_{(-m)}^{|I|-1}$. Putting this together with \eqref{brvanish}, we see that the non-equivariant limit of  $\phi_\T^+\left(\bU_\T(\Phi^\mu)\right)$ lies in the kernel of $i^*$ whenever $\Phi^{\mu}$ is not narrow. This is important below. 

We now prove assertion (iii). Start by writing $\bff=\bff'+\bff''$ where $\bff'$ is supported on $\H^W$. Then
\begin{align*}
\lim_{\alpha\rightarrow 0}\left(i^*\circ\phi_\T^+\circ\bU_\T\left(\bff \right)\right)&=i^*\left(\lim_{\alpha\rightarrow 0}\left(\phi_\T^+\circ\bU_\T\left(\bff' \right)\right)\right)\\
&=i^*\left(\lim_{\alpha\rightarrow 0}\left(\phi_\T^+\circ\bU_\T\right)\left(\lim_{\alpha\rightarrow 0}\bff' \right)\right)\\
&=\bV\left(\lim_{\alpha\rightarrow 0}\bff \right),
\end{align*}
where the first equality follows from the fact that $\lim_{\alpha\rightarrow 0}\phi_\T^+\circ\bU_\T(\bff'')$ lies in the kernel of $i^*$,  the second follows from the fact that $\phi_\T^+\circ\bU_\T$ has a well-defined non-equivariant limit upon restriction to the narrow subspace, and the third follows from Corollary \ref{cor:nelimit} and the definition of $\bV$. This completes the proof of the lemma, and thus, of Theorem \ref{maintheorem2}.

\end{proof}

%
%
%

\bibliographystyle{abbrv}
\bibliography{biblio}

\end{document}